\documentclass[final, leqno]{amsart}

\usepackage{amsmath,amsthm}
\usepackage {latexsym}
\usepackage{amssymb}




\newcommand{\eps}{{\varepsilon}}
\newcommand{\R}{{\mathbb R}}

\newcommand{\Compl}{{\mathbb C}}
\newcommand{\meas}{{\mathcal M}}
\newcommand{\B}{{\mathcal B}}
\newcommand{\U}{{\mathcal U}}

\newcommand{\X}{\mathcal X}

\newcommand{\les}{\lesssim}

\newcommand{\Kato}{{\mathcal K}}

\newcommand{\la}{\langle}
\newcommand{\ra}{\rangle}
\newcommand{\1}{{\mathbf 1}}
\def\norm[#1][#2]{\|#1\|_{#2}}
\def\bignorm[#1][#2]{\big\|#1\big\|_{#2}}
\def\Bignorm[#1][#2]{\Big\|#1\Big\|_{#2}}
\def\japanese[#1]{\langle #1 \rangle}
\def\Im[#1]{{\rm Im}(#1)}
\def\Re[#1]{{\rm Re}(#1)}

\newcommand{\be}{\begin{equation}}
\newcommand{\ee}{\end{equation}}
\newcommand{\lb}{\label}
\newcommand{\dd}{\,d}
\newcommand{\K}{\Kato}

\newcommand{\mb}{\mathbf}

\newcommand{\M}{\mathbb M}
\newcommand{\dl}{\nabla}
\DeclareMathOperator*{\esssup}{ess\,sup}
\DeclareMathOperator{\sgn}{sgn}

\newtheorem{theorem}{Theorem}
\newtheorem{lemma}[theorem]{Lemma}

\newtheorem{corollary}[theorem]{Corollary}

\newtheorem{proposition}[theorem]{Proposition}

\theoremstyle{remark}
\newtheorem{remark}{Remark}
\newtheorem{definition}{Definition}


\begin{document}

\title[Strichartz Estimates for the Wave Equation]{Strichartz Estimates and 
Maximal Operators for the Wave Equation in $\R^3$}

\author{Marius\ Beceanu}
\thanks{The first author received support from the Rutgers Research Council 
during the preparation of this work.}
\address{School of Mathematics, Institute for Advanced Study, Princeton, NJ, 08540}
\email{mbeceanu@math.rutgers.edu}

\author{Michael\ Goldberg}
\thanks{The second author received support from NSF grant DMS-1002515 during the 
preparation of this work.}
\address{Department of Mathematics, University of Cincinnati,
Cincinnati, OH 45221-0025}
\email{Michael.Goldberg@uc.edu}

\begin{abstract}
We prove sharp Strichartz-type estimates in three dimensions, including some which hold in 
reverse spacetime norms, for the wave equation with potential. These results are 
also tied to maximal operator estimates studied by Rogers--Villaroya, of which 
we prove a sharper version.

As a sample application, we use these results to prove the local well-posedness
and the global well-posedness for small initial data of semilinear 
wave equations in $\R^3$ with quintic or higher monomial nonlinearities.
\end{abstract}

\maketitle

\section{Introduction} \label{sec:intro}

\subsection{Main result}
Consider the linear wave equation in $\R^3$
\begin{equation} \label{eq:wave}
\partial^2_t f - \Delta f + V f = F, f(0) = f_0,\ \partial_t f(0) = f_1.
\end{equation}

Under rather general conditions (i.e.\ if $H={-\Delta}+V$ is selfadjoint on
$L^2$), taking initial data in $L^2 \times \dot H^{-1}$ for example, the 
solution to this equation is given by the formula
\begin{equation} \label{eq:formalsolution}
f(t) = \cos(t\sqrt H) f_0 + \frac {\sin (t\sqrt H)}{\sqrt H} f_1 + \int_0^t 
\frac {\sin ((t-s)\sqrt H)}{\sqrt H} F(s) \dd s.
\end{equation}
The expressions in~\eqref{eq:formalsolution} are independent of the branch 
chosen for $\sqrt{H}$, 
so are well defined even if $H$ is not positive.

The homogeneous equation has a conserved energy, namely $E(t) = 
\norm[f][\dot{H}^1]^2
+ \norm[\partial_t f][2]^2 + \int_{\R^3} V|f|^2\,dx$,
which however does not preclude time-decay of solutions with respect to other 
norms.
In the free ($V = 0$) case there are well known uniform decay bounds when the initial 
data $f_0$ and
$f_1$ possess a sufficient degree of Sobolev regularity: 

\begin{align*}
\norm[\cos(t\sqrt{-\Delta})f_0][\infty] &\les |t|^{-1} \norm[f_0][W^{2,1}(\R^3)] 
\\
\Bignorm[\frac{\sin(t\sqrt{-\Delta})}{\sqrt{-\Delta}}f_1][\infty] &\les |t|^{-1}
\norm[f_1][W^{1,1}(\R^3)]
\end{align*}

These operators act by pointwise multiplication of the Fourier transform, so 
their mapping
properties on $L^2$ and $H^s$ follow immediately by Placherel's identity.  By 
combining them
with the dispersive estimates above, one can extract the family of Strichartz 
inequalities that
control each term in~\eqref{eq:formalsolution} with respect to certain norms 
$L^p_tL^q_x$.
The full range of valid pairs of exponents is determined in~\cite{tao}.

We will prove dispersive and Strichartz estimates for a large class of
short-range potentials
$V(x)$, and also establish bounds of a similar nature in the reversed space-time 
norms
\begin{equation*}
\norm[f][L^p_x L^q_t] = \bigg(\int_{R^d} \|f(x, t)\|_{L^q_t}^p \dd 
x\bigg)^{1/p}.
\end{equation*}

Two applications of the reversed Strichartz inequalities are presented.  For the 
linear evolution
we prove an endpoint estimate for the maximal operator, and deduce almost-everywhere
convergence to the initial data when $(f_0, f_1)$ belong to the energy space.  
These results are
new even when $V = 0$.  We then state a concise global well-posedness result for 
small solutions
of the energy-critical semilinear wave equation with a potential.

One substantial difference between the perturbed Hamiltonian $H = -\Delta + V$ 
and the Laplacian
is the possible existence of eigenvalues.  For the class of short-range 
potentials we consider, the
essential spectrum of $H$ is $[0,\infty)$ and the point spectrum may include a 
countable number of
eigenvalues occupying a bounded subset of the real axis that is discrete away 
from zero.  Embedded
positive eigenvalues do not occur if $V \in L^{3/2}_{\rm loc}$~\cite{IoJe}; when 
$V$ is more singular
we add this as an assumption.  We further assume that zero is a regular point of 
the spectrum of $H$. 
Under these hypotheses $H$ possesses pure absolutely continuous spectrum on 
$[0,\infty)$ and a 
finite number of negative eigenvalues.

If $E < 0 $ is a negative eigenvalue, the associated eigenfunction responds to 
the wave
equation propagators via scalar multiplication by $\cos(t\sqrt{E})$ or $E^{-
1/2}\sin(t\sqrt{E})$, 
both of which grow exponentially due to $\sqrt{E}$ being purely imaginary.  
Dispersive estimates for $H$
must include a projection onto the continuous spectrum in order to avoid this 
growth.  Otherwise, further conditions on the potential are required to prevent the 
existence of eigenvalues
entirely.

Results below refer to the Kato norm $\Kato$, introduced by Rodnianski--Schlag 
in \cite{rodsch} and D'Ancona--Pierfelice in \cite{DaPi}. 
In particular, potentials $V$ are taken in the Kato norm 
closure of the set of bounded, compactly supported functions, which we denote by 
$\Kato_0$.

\begin{definition}
The Kato space is the Banach space of measures with the property that
\begin{equation}
\norm[V][\Kato] := \sup_{y\in\R^3} \int_{\R^3} \frac{|V(x)|}{|x-y|}\,dx < 
\infty.
\end{equation}
\end{definition}
$\K_0$ contains the Lorentz space $L^{3/2, 1}$ by Young's inequality, as well 
as $\dot W^{1, 1}$ by Lemma~\ref{kato_w11}.  Among compactly supported
functions, $K_0$ coincides with the Kato class defined by the property
$\sup_y \int_{|x-y| < r} |V(x)|/|x-y|\,dx \to 0$ as $r$ decreases to zero (see
\cite[Lemma 4.4]{DaPi}).

Note that the homogeneous wave equation~\eqref{eq:wave} 
is left unchanged by the rescaling $f(x, t) \mapsto f(\alpha x, \alpha t)$, as 
long as the potential $V(x)$ changes to $\alpha^2 V(\alpha x)$. Quadratic decay 
at infinity is invariant under this rescaling, as is the Kato norm.  At the
level of operators, pointwise multiplication by $V \in \Kato_0$ is 
infinitesimally form-bounded relative to the Laplacian,
hence there is a unique self-adjoint realization of $H = -\Delta + V$.

Several estimates in the following discussion use the complex interpolation 
spaces $\Kato^\theta:= (L^1, \Kato)_{[\theta]}$, for $0 \leq \theta \leq 1$, and 
their duals. Note that
$$
\Kato^\theta := (L^1, \Kato)_{[\theta]} = \{f \mid \sup_x \int_{\R^3} \frac 
{|f(y)| \dd y}{|x-y|^{\theta}}<\infty\}
$$
and by Young's inequality
\be\lb{young}
L^{3/(3-\theta), 1} \subset \Kato^\theta.
\ee
The description of $(\Kato^\theta)^*$ is more involved:
$$
(\Kato^\theta)^* := \{f \mid f(x) = \int g(x, y) \dd \mu(y),\ \int_{\R^3} 
\big(\sup_x |x-y|^{\theta} |g(x, y)|\big) \dd \mu(y) < \infty \}.$$
Also note that by duality, for $0 \leq \theta \leq 1$
\be\lb{young*}
(\Kato^{\theta})^* \subset L^{3/\theta, \infty}.
\ee
Thus results expressed by means of $\Kato^{\theta}$ and $(\Kato^\theta)^*$ are 
sharper than those containing the scale of Lorentz spaces $L^{p, q}$.

In this paper we exhibit a new class of Strichartz inequalities in $\R^3$, which 
hold in reversed space-time norms of the form
$$
\|f\|_{L^p_x L^q_t} = \bigg(\int_{R^d} \|f(x, t)\|_{L^q_t}^p \dd x\bigg)^{1/p}
$$
and $\|f\|_{\Kato^\theta_x L^p_t}$, $\|f\|_{(\Kato^\theta)^*_x L^p_t}$ defined 
in the same manner.
For pairs of exponents $(p,q)$ with $q >p$ such inequalities will be stronger 
than the standard Strichartz bounds. In other cases, reversed space-time
estimates will hold for pairs of coefficients for which the corresponding 
regular
Strichartz estimates are false.

We allow for inhomogeneous terms $F$ of the same types, which enables the 
familiar bootstrapping methods 
for semilinear equations to take place entirely in reversed space-time function 
spaces.

In the following discussion let $a \les b$ denote $|a| \leq C |b|$ for various 
values of $C$.

The assumption that $H$ has no eigenvalues or resonances at zero leads to a 
number of equivalences between
Sobolev spaces based on applications of $H$ in place of the Laplacian.  
Lemma~\ref{comp_lemma} demonstrates that
$\norm[\Delta f][1]$ and $\norm[Hf][1]$ are equivalent under these conditions, 
and the same applies to
$\norm[\Delta f][\Kato]$ and $\norm[H f][\Kato]$ (or 
$\norm[\,\cdot\,][L^{3/2,1}]$, if $V \in L^{3/2,1}$).

It is also true that the positive quadratic form $\la |H|f, f\ra$ is equivalent 
to $\norm[f][\dot{H}^1]^2$, though this fact
will play a less prominent role in our analysis.

The main result of this paper is the following:
\begin{theorem}[Reversed-norm dispersive estimates] \lb{main_theorem} Consider a 
real-valued potential $V \in \Kato_0$ on $\R^3$ such that ${-\Delta} + V$ has 
no eigenvalues on $[0, \infty)$ and no resonance at zero. Then
$$\begin{aligned}
\Big\|t \frac {\sin(t \sqrt H) P_c}{\sqrt H} f \Big\|_{L^{\infty}_x L^1_t} 
  &\les \norm[f][1], \\
\Big\|\int_t^{\infty} \Big|\frac {\sin(s \sqrt H) P_c}{\sqrt H} f \Big|\dd s 
\Big\|_{L^{\infty}_x L^1_t} &\les \|f\|_1, \\
\esssup_x \int_t^{\infty} \Big|\frac {\sin(s\sqrt H) P_c}{\sqrt H} f \Big| \dd s 
&\les |t|^{-1} \|f\|_1
\end{aligned}$$
and also
$$\begin{aligned}
\Big\|\frac {\sin(t \sqrt H) P_c}{\sqrt H} f \Big\|_{L^{\infty}_x L^1_t} 
&\les \norm[f][\Kato] \les \norm[\nabla f][1],\\
\Big\|\frac {\sin(t \sqrt H) P_c}{\sqrt H} f \Big\|_{\Kato^*_x L^1_t} 
& \les \norm[f][1].
\end{aligned}$$
Assume that $f \in L^2$ and $\nabla f \in L^1$.  Then
\begin{align*}
\norm[t\cos(t\sqrt{H})P_c f][L^\infty_xL^1_t] & \les \norm[\nabla f][1] \\
\norm[\cos(t\sqrt{H})P_c f][\Kato^*_xL^1_t] & \les \norm[\nabla f][1] \\
\Bignorm[\frac{\sin(t\sqrt{H})P_c}{\sqrt{H}} f][\Kato^*_x L^\infty_t]
&\les \norm[\nabla f][1] \\
\Bignorm[\frac{\sin(t\sqrt{H})P_c}{\sqrt{H}} f][L^\infty_x]
&\les |t|^{-1}\norm[\nabla f][1].
\end{align*}
Assume that $f \in L^2$ and $\Delta f \in L^1$ or $\dl f \in \Kato$. Then
$$\begin{aligned}
\|\cos(t \sqrt H) P_c f \|_{L^{\infty}_x L^1_t} &\les \min(\norm[\Delta f][1], 
\|\dl f\|_{\Kato}),\\
\|\cos(t \sqrt H) P_c f\|_{L^{\infty}_x} &\les |t|^{-1} \|\Delta f\|_1,\\ 
\Big\|\frac {\sin(t \sqrt H) P_c}{\sqrt H} f\Big\|_{L^{\infty}_t L^{\infty}_x} 
&\les \min(\|\Delta f\|_1, \|\dl f\|_{\Kato}).
\end{aligned}$$
Furthermore,
$$\begin{aligned}
\|\cos(t \sqrt H) P_c f\|_{\Kato^*_x L^{\infty}_t} &\les \|\Delta f\|_{1} \\
\|\cos(t \sqrt H) P_c f\|_{L^{\infty}_{x, t}} &\les \|\Delta f\|_{\Kato}.\\
\end{aligned}$$
The resulting inhomogeneous estimates are, for $1\leq p\leq \infty$,
$$\begin{aligned}
\Big\|\int_{t'<t} \frac {\sin((t-t') \sqrt H) P_c}{\sqrt H} F(t') 
\dd t' \Big\|_{\Kato^*_x L^p_t} 
  &\les \|F\|_{L^1_x L^p_t},\\
\Big\|\int_{t'<t} \frac {\sin((t-t') \sqrt H) P_c}{\sqrt H} F(t') 
\dd t' \Big\|_{L^{\infty}_x L^p_t} 
  &\les \|F\|_{\Kato_x L^p_t}. \\
\end{aligned}$$
Likewise,
$$\begin{aligned}
\Big\|\int_{t' < t} \frac{\cos((t-t')\sqrt {H}) P_c}{H} F(t') \dd 
t' \Big\|_{L^{\infty}_xL^p_t} &\les \|F\|_{L^1_x L^p_t},\\
\Big\|\int_{t' < t} \frac{\cos((t-t')\sqrt {H}) P_c}{H} F(t') \dd 
t' \Big\|_{\Kato^*_x L^{\infty}_t} &\les \|F\|_{L^1_{x, t}},\\
\Big\|\int_{t'<t} \frac{\cos((t-t')\sqrt {H}) P_c}{H} F(t') \dd 
t' \Big\|_{L^{\infty}_{x, t}} &\les \|F\|_{\Kato_x L^1_t}.
\end{aligned}$$
\end{theorem}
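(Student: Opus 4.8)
The plan is to establish every bound first for the free equation $V=0$, then transfer to general $V\in\Kato_0$ by a resolvent expansion whose convergence is controlled by the spectral hypotheses, and finally deduce the inhomogeneous estimates from the homogeneous ones. In the free case one uses the Kirchhoff formula $\tfrac{\sin(t\sqrt{-\Delta})}{\sqrt{-\Delta}}f(x)=\tfrac1{4\pi t}\int_{|x-y|=t}f(y)\,d\sigma(y)$ and $\cos(t\sqrt{-\Delta})=\partial_t\big(\tfrac{\sin(t\sqrt{-\Delta})}{\sqrt{-\Delta}}\big)$. The reversed-norm estimates carrying an $L^1_t$ integral then fall out of the co-area identity $\int_0^\infty\!\int_{|x-y|=t}g\,d\sigma\,dt=\int_{\R^3}g\,dy$: for instance $\int_0^\infty\big|\,t\,\tfrac{\sin(t\sqrt{-\Delta})}{\sqrt{-\Delta}}f(x)\big|\,dt\le\tfrac1{4\pi}\|f\|_1$ uniformly in $x$, while inserting the extra factor $1/t$ (which equals $1/|x-y|$ on the sphere of radius $t$) reconstitutes the Kato measure and produces the $\|f\|_\Kato$-bounds, and restricting the $s$-integral to $s>t$ and using $1/s\le1/t$ gives the $|t|^{-1}$ versions. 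The cosine and $\Delta$-regularity variants follow by writing $f(x+t\omega)$, or $\partial_t$ of the spherical average, as a radial integral of $\omega\cdot\nabla f$ (or of two derivatives of $f$), after which the same co-area identity applies; the pointwise-in-$t$ $L^\infty_x$ bounds are the classical dispersive estimates. The $\Kato^*_x$ estimates are read off from the explicit kernel: writing the propagator applied to $f$ as $\int_{\R^3}K(t,x,y)\,(\text{derivatives of }f)(y)\,dy$ with $K(t,x,\cdot)$ a surface measure of the appropriate mass, the quantity $\int_{\R^3}\sup_x\big(|x-y|\,\|K(\cdot,x,y)\|_{L^1_t\text{ or }L^\infty_t}\big)\,dy$ is finite and bounded by the stated right-hand side, which is exactly membership in $(\Kato^1)^*$ as defined above.

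For general $V$ I would represent the perturbed propagators as contour integrals of the resolvent $R_V(\lambda^2\pm i0)$ against $\tfrac{\sin(t\lambda)}{\lambda}$, $\cos(t\lambda)$, or $\tfrac{\cos(t\lambda)}{\lambda^2}$; the contour automatically implements the projection $P_c$, since the negative eigenvalues sit off the continuous spectrum. Using $R_V=(I+R_0V)^{-1}R_0$ together with the Born iteration $R_V=R_0-R_0VR_0+R_0VR_0VR_V-\cdots$, each perturbed propagator is expressed as the corresponding free propagator plus correction terms built from free propagators, multiplication by $V$, and the factor $(I+R_0(\lambda^2\pm i0)V)^{-1}$. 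The hypotheses — $V\in\Kato_0$, no eigenvalue on $[0,\infty)$, zero a regular point — are used precisely to make this last factor bounded on the Kato-adapted spaces uniformly in $\lambda\in\R$, via a limiting absorption principle (the low-energy nondegeneracy coming from invertibility of $I+R_0(0)V$, the high-energy smallness from the oscillatory decay of $R_0(\lambda^2\pm i0)$ together with the absence of embedded eigenvalues); the resolvent lemmas of the earlier part of the paper supply this input. Each correction term then reduces to one of the free estimates from the first paragraph, composed with the mapping properties of multiplication by $V\in\Kato_0$ (which carries $L^\infty_x\to\Kato_x$, gaining the needed off-diagonal decay) and with the bounded resolvent factor $R_0(\lambda^2\pm i0):\Kato_x\to L^\infty_x$, so the perturbed bounds hold with the same right-hand sides; a few of the listed estimates can alternatively be obtained from others by the self-adjointness of $\tfrac{\sin(t\sqrt H)}{\sqrt H}$ and $\cos(t\sqrt H)$ and the duality $(L^1_x,L^\infty_x)$, $(\Kato_x,\Kato^*_x)$.

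The inhomogeneous estimates are then routine: for fixed $x$ the retarded integral $\int_{t'<t}(\cdots)F(t')\,dt'$ is a convolution in time, so Minkowski's inequality reduces the $p=1$ case to the homogeneous $L^1_t$ bound, a uniform-in-$t$ bound gives $p=\infty$, interpolation covers $1\le p\le\infty$, and the Christ--Kiselev lemma handles the truncation $t'<t$; the $\tfrac{\cos((t-t')\sqrt H)P_c}{H}$ versions are identical, invoking the homogeneous cosine and $\Delta$-regularity estimates. I expect the main obstacle to be the perturbed step — establishing the uniform-in-$\lambda$ invertibility of $I+R_0(\lambda^2\pm i0)V$ on the Kato-type spaces and their duals, and in particular the delicate behaviour as $\lambda\to0$ (where regularity of zero is needed) and as $\lambda\to\infty$ (oscillatory resolvent estimates, where the absence of embedded eigenvalues is needed). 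By comparison, the free estimates are just the Kirchhoff formula and the co-area identity, and the inhomogeneous and compositional arguments are bookkeeping once the resolvent machinery is in place.
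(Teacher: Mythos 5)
Your free-case computations (Kirchhoff formula plus the co-area identity on light cones) and the final bookkeeping for the inhomogeneous estimates are in the right spirit, but the transference step from $-\Delta$ to $H$ contains a genuine gap. You propose to control the perturbed propagators by proving that $(I+R_0(\lambda^2\pm i0)V)^{-1}$ is bounded on Kato-adapted spaces \emph{uniformly in} $\lambda$ (a limiting absorption principle) and then composing with the free estimates. A uniform-in-$\lambda$ operator bound controls each frequency separately; it does not control reversed space-time norms such as $L^\infty_xL^1_t$ or $\Kato^*_xL^1_t$ of the evolution. Concretely, the propagator is (a symmetrization of) $\int_\R e^{it\lambda}\,(I+\hat T^-(\lambda)^*)^{-1}R_0^+(\lambda^2)f\,d\lambda$, and to place this in $L^\infty_xL^1_t$ one must know that the inverse Fourier transform in $\lambda$ of the operator family $(I+\hat T^-(\lambda))^{-1}$ is a kernel $S(\rho,x,y)$ that is a \emph{finite measure in the time variable $\rho$}, whose total-variation marginal $\int_\R|S(\rho,x,y)|\,d\rho$ is a bounded operator on the relevant lattice; only then can one convolve in time and compose in space with the light-cone kernel of the free propagator. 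Pointwise-in-$\lambda$ invertibility, even uniform, yields at best $L^2_t$ information via Plancherel. Establishing membership of $(\1+T^-)^{-1}$ in the Wiener-type convolution algebra $\U_X$ is precisely the content of Proposition~\ref{thm:Wiener_2} and Theorem~\ref{thm:goal_lattice}, and it is the principal technical effort of the proof: it consumes the same inputs you cite (Fredholm invertibility at each $\lambda$, high-energy decay, regularity of zero) but packages them as conditions (C1)--(C2) on the time-side kernel, which is strictly more than a limiting absorption principle. The Born series you write down converges only for small $\|V\|_{\Kato}$, so it cannot substitute for this inversion step either.

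A second, related omission: the weighted estimates ($\bignorm[t\,\sin(t\sqrt H)P_cf/\sqrt H][L^\infty_xL^1_t]\les\norm[f][1]$ and the $t\cos$ bound) additionally require that $\rho\,(\1+T^-)^{-1}$ belong to a weighted version of the algebra (Proposition~\ref{thm:2ndgoal}), which encodes differentiability of $(I+\hat T(\lambda))^{-1}$ in $\lambda$ together with integrability of the derivative's time-side kernel; this does not follow from your outline. Two minor points: the paper obtains the cosine bounds from the sine bounds via the identity $\cos(t\sqrt H)P_c=\int_t^\infty\sqrt H\sin(s\sqrt H)P_c\,ds$ rather than directly, and Christ--Kiselev is neither needed (the kernels are retarded, so the truncation $t'<t$ is automatic) nor applicable at the diagonal exponent $L^p_t\to L^p_t$.
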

$\Kato^*$ can be replaced by $L^{3, \infty}$ (weak-$L^3$) wherever it appears in 
the statement of Theorem \ref{main_theorem}. Also note that $W^{1, 1} \subset 
\Kato$ by Lemma \ref{kato_w11}.

By interpolation we obtain a wider family of inequalities:
\begin{corollary}\lb{cor_interpolation} Consider a real-valued potential $V \in 
\Kato_0$ on $\R^3$ such that $-\Delta+V$ has 
no eigenvalues on $[0, \infty)$ and no resonance at zero.
Then, for $0 \leq \theta \leq 1$
\begin{align}
\Big\|\int_{t'<t}\frac {\sin((t-t') \sqrt H) P_c}{\sqrt H} F(t') 
\dd t'\Big\|_{(\Kato^{1-\theta})^*_x L^p_t} &\les \|F\|_{K^{\theta}_x L^p_t} 
\label{eq:sinKato}\\
\Big\|\int_{t'<t}\frac{\cos((t-t') \sqrt H) P_c}{H} F(t') \dd 
t'\Big\|_{(\Kato^{1-\theta})^*_x L^{\infty}_t} &\les \|F\|_{\Kato^\theta_x 
L^1_t}. \notag
\end{align}
More generally for any $0 \leq \theta_1, \theta_2 \leq 1$ and 
$1 \leq p \leq q \leq \infty$ with $\theta_1 + \theta_2 \leq 1$ and 
$\frac{1}{p} - \frac{1}{q} = \theta_1 + \theta_2$,
\begin{equation} \label{eq:cosKato}
\Bignorm[\int_{t'<t}\frac{\cos((t-t') \sqrt H) P_c}{H} F(t') 
\dd t'][(\Kato^{\theta_2})^*_xL^q_t] \les
\norm[F][\Kato^{\theta_1}_xL^p_t].
\end{equation}
Additionally, for $\theta_1, \theta_2$ as above and 
$\frac{1}{q} = 1 - \theta_1 - \theta_2$,
\begin{align}
\Bignorm[\frac{\sin(t\sqrt{H})P_c}{\sqrt{H}} f][(\Kato^{\theta_2})^*_xL^q_t]
& \les \norm[\nabla f][\Kato^{\theta_1}], \notag \\
\bignorm[\cos(t\sqrt{H})P_c f][(\Kato^{\theta_2})^*_x L^q_t]
&\les \norm[\Delta f][\Kato^{\theta_1}], \notag \\ 
\Bignorm[\frac{\sin(t\sqrt{H})P_c}{\sqrt{H}} f][(\Kato^{\theta_2})^*_x] 
& \les |t|^{\theta_1 + \theta_2 - 1}\norm[\nabla f][\Kato^{\theta_1}], \notag\\
\big\|\cos(t \sqrt H) P_c f\big\|_{(\Kato^{\theta_2})^*_x} 
&\les |t|^{\theta_1 + \theta_2 - 1} 
\norm[\Delta f][K^{\theta_1}].  \label{eq:dispKato}
\end{align}
The following Lorentz space inequalities are also valid in the range
$1 < p,q < \infty$, $1 \leq s \leq \infty$:
\begin{equation} \label{eq:sinLorentz}
\Bignorm[\int_{t'<t} \frac {\sin((t-t') \sqrt H) P_c}{\sqrt H} F(t') 
\dd t'][L^{q, s}_x L^r_t] \leq C_{pq} \norm[F][L^{p, s}_x L^r_t]
\end{equation}
for $\frac{1}{p} - \frac{1}{q} = \frac{2}{3}$ and $1 \leq r \leq \infty$,
\begin{equation} \label{eq:cosLorentz}
\Big\|\int_{t'<t}\frac{\cos((t-t') \sqrt H) P_c}{H} F(t') \dd t'
\Big\|_{L^{q, s}_x L^r_t} \leq C_{pq} \|F\|_{L^{p, s}_x L^{\tilde{r}}_t},
\end{equation}
for $\frac 1 p - \frac 1 q = 1-\frac13(\frac{1}{\tilde{r}} - \frac{1}{r})$, 
$1 \leq \tilde{r} < r \leq \infty$, and
\begin{equation} \label{eq:dispLorentz}
\Bignorm[\frac{\cos(t\sqrt{H})P_c}{H} f][L^{q,s}] \leq C_{pq} |t|^{-1/r}
\norm[f][L^{p,s}]
\end{equation}
for $\frac{1}{p} - \frac{1}{q} = \frac{2}{3} + \frac{1}{3r}$,
$1 < r \leq \infty$, $1<p, q<\infty$.  

Under the stronger assumption $V \in L^{3/2,1}(\R^3)$ and the same $(p,q,r)$
as in~\eqref{eq:dispLorentz}, these inequalities hold as well:
\begin{align*}
\|\cos(t \sqrt H) P_c f\|_{L^{q, s}_x L^r_t} &\leq C_{pq} \|\Delta 
f\|_{L^{p,s}}, \\
\norm[\cos(t \sqrt H) P_c f][L^{q,s}] &\leq C_{pq} |t|^{-1/r} \norm[\Delta 
f][L^{p,s}].
\end{align*}
The constant $C_{pq}$ satisfies the bound
$C_{pq} \les \frac{(p'+q)^2}{p'q}$, where $p' := \frac{p}{p-1}$.
In particular, $C_{pq}$ is uniformly bounded over all pairs
where $p' = q$.
\end{corollary}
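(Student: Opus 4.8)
The plan is to deduce every inequality of the corollary from Theorem~\ref{main_theorem} by interpolation, in two stages: a complex-interpolation stage producing the Kato-scale estimates \eqref{eq:sinKato}, \eqref{eq:cosKato}, \eqref{eq:dispKato}, and then, via the embeddings \eqref{young}, \eqref{young*}, a real-interpolation stage yielding the Lorentz estimates \eqref{eq:sinLorentz}--\eqref{eq:dispLorentz} together with the sharp constant $C_{pq}$. For the first stage recall $\Kato^0=L^1$, $\Kato^1=\Kato$, $(\Kato^0)^*=L^\infty$, $(\Kato^1)^*=\Kato^*$, and $\Kato^\theta=(L^1,\Kato)_{[\theta]}$; under these identifications the endpoint bounds of Theorem~\ref{main_theorem} are precisely the vertices $(\theta_1,\theta_2)\in\{(0,0),(1,0),(0,1)\}$ of the relevant simplex in each claimed family. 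Since the complex method commutes with passage to $L^p_t$-valued spaces, a single complex interpolation in the spatial variable between $\|\cdot\|_{\Kato^*_xL^p_t}\les\|F\|_{L^1_xL^p_t}$ and $\|\cdot\|_{L^\infty_xL^p_t}\les\|F\|_{\Kato_xL^p_t}$ gives \eqref{eq:sinKato}, and likewise the cosine estimate displayed next to it.

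For the two-parameter bound \eqref{eq:cosKato} I would interpolate iteratively. First interpolate the two endpoint estimates that already have $(p,q)=(1,\infty)$, namely $L^1_{x,t}\to\Kato^*_xL^\infty_t$ and $\Kato_xL^1_t\to L^\infty_{x,t}$, to obtain $\Kato^\alpha_xL^1_t\to(\Kato^{1-\alpha})^*_xL^\infty_t$ for every $\alpha\in[0,1]$. Then interpolate a member of this diagonal family against $L^1_xL^{\tilde p}_t\to L^\infty_xL^{\tilde p}_t$, letting $\alpha$, $\tilde p$, and the interpolation parameter all vary; a direct bookkeeping of exponents shows this reaches every admissible quadruple $(\theta_1,\theta_2,p,q)$ with $\theta_1+\theta_2\le 1$ and $\tfrac1p-\tfrac1q=\theta_1+\theta_2$. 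The homogeneous estimates in \eqref{eq:dispKato} come out the same way: the mixed-norm ones by interpolating the three endpoint bounds for $\tfrac{\sin(t\sqrt H)P_c}{\sqrt H}$ and for $\cos(t\sqrt H)P_c$ recorded in the theorem, and the fixed-time ones by interpolating the corresponding fixed-$t$ bounds, with the weights $|t|^0,|t|^0,|t|^{-1}$ interpolating to $|t|^{\theta_1+\theta_2-1}$.

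For the second stage I substitute $L^{3/(3-\theta),1}\subset\Kato^\theta$ from \eqref{young} on the source side and $(\Kato^\theta)^*\subset L^{3/\theta,\infty}$ from \eqref{young*} on the target side. Then \eqref{eq:sinKato} becomes, for each admissible $\theta$, a restricted-weak-type mixed-norm bound $L^{p_0,1}_xL^r_t\to L^{q_0,\infty}_xL^r_t$ on the scaling line $\tfrac1{p_0}-\tfrac1{q_0}=\tfrac23$ with $p_0$ running over $[1,\tfrac32]$, and \eqref{eq:cosKato}, \eqref{eq:dispKato} become the restricted-weak-type forms of \eqref{eq:cosLorentz}, \eqref{eq:dispLorentz}. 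Real interpolation between two such estimates at distinct exponents $p_0\neq p_1$ on the same scaling line --- using $(L^{a_0,1},L^{a_1,1})_{\eta,s}=L^{a,s}$ and $(L^{b_0,\infty},L^{b_1,\infty})_{\eta,s}=L^{b,s}$ --- simultaneously raises the Lorentz second index to an arbitrary common $s$ and fills the open range $1<p,q<\infty$; this is exactly why the endpoints must be excluded in \eqref{eq:sinLorentz}--\eqref{eq:dispLorentz}. Finally, the $V\in L^{3/2,1}$ variants follow by running the same chain and then trading $\Delta$ for $H$ on the right-hand side, which is legitimate in the relevant $L^{p,s}$ range because $V(-\Delta)^{-1}$ is bounded there under that hypothesis --- a Lorentz-space refinement of the $L^1$ and Kato equivalences of Lemma~\ref{comp_lemma}.

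The hard part will be the sharp constant $C_{pq}\les\tfrac{(p'+q)^2}{p'q}$. The complex-interpolation stage is constant-free, and the embeddings in \eqref{young}, \eqref{young*} have constants bounded uniformly over $\theta\in[0,1]$, so all of the growth is produced by the single real-interpolation step, whose operative constant degenerates like $\big(\eta(1-\eta)\big)^{-1}$ --- a product of one factor that blows up as $p\to 1$ and one that blows up as $q\to\infty$ --- once the geometric mean of the two endpoint norms is absorbed. On a scaling line the quantity $\tfrac1{p'}+\tfrac1q$ equals a fixed positive constant $c$, and the interpolation parameter is affine in position along the line, so $\eta\asymp c^{-1}/q$ and $1-\eta\asymp c^{-1}/p'$; hence $\big(\eta(1-\eta)\big)^{-1}\asymp c^2p'q=\tfrac{(p'+q)^2}{p'q}$. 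The real work is to choose the interpolation data optimally and to carry this estimate through each of \eqref{eq:sinLorentz}, \eqref{eq:cosLorentz}, \eqref{eq:dispLorentz} with its own scaling so that the numerator comes out as $(p'+q)^2$ and no worse; in particular $C_{pq}$ stays $O(1)$ exactly on the balanced locus $p'=q$, where $\eta$ is bounded away from $0$ and $1$.
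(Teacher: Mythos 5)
Your proposal matches the paper's proof in both structure and substance: complex interpolation between the three endpoint statements of Theorem~\ref{main_theorem} (equivalently, of the kernel characterizations~\eqref{eq:prop_kernels}) produces the Kato-scale family~\eqref{eq:sinKato}--\eqref{eq:dispKato}, while the embeddings $L^{3/(3-\theta),1}\subset\Kato^\theta$ and $(\Kato^\theta)^*\subset L^{3/\theta,\infty}$ convert these into restricted-weak-type bounds on the scaling line, from which Hunt's interpolation theorem yields the Lorentz estimates~\eqref{eq:sinLorentz}--\eqref{eq:dispLorentz}; and the $L^{3/2,1}$ variants follow by trading $\Delta$ for $H$ using the Lorentz refinement of Lemma~\ref{lem:norm_equiv}. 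The one place you go further than the paper is the constant $C_{pq}$: the paper asserts the bound without derivation, whereas you trace it to the $(\eta(1-\eta))^{-1}$ degeneration of the real-interpolation step and verify, by parametrizing the scaling line, that this is exactly $\frac{(p'+q)^2}{p'q}$ — a useful bit of bookkeeping that the paper leaves implicit.
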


Finally, we state our results concerning homogeneous Strichartz estimates. 
Several of the endpoint bounds are valid, and one
of them doubles as a statement about maximal operators:
\begin{theorem}[Strichartz and reversed-norm Strichartz 
estimates]\lb{thm_maximal}
Consider a 
real-valued potential $V \in \Kato_0$ on $\R^3$ such that $-\Delta+V$ has 
no eigenvalues on $[0, \infty)$ and no resonance at zero.  Then for all
$0 \leq s <\frac 32$ and $\frac{1}{r} + \frac{3}{q} = \frac32 - s$ in the range
$0 \leq \frac{1}{r} < \frac{s}{2}$,
\begin{equation} \label{eq:Strichartz1}
\Bignorm[\frac{e^{it\sqrt{H}}P_c}{H^{s/2}} f][L^r_t L^q_x] \les
\norm[f][2].
\end{equation}
The initial-value problem for the wave equation~\eqref{eq:wave}
satisfies
\begin{equation} \label{eq:Strichartz}
\Bignorm[\cos(t\sqrt{H})P_c f_0 + \frac{\sin(t\sqrt{H})P_c}{\sqrt{H}} f_1][
L^r_t L^q_x]
\les  \norm[f_0][\dot{H}^s] + \norm[f_1][\dot{H}^{s-1}],
\end{equation}
for all $s \in [0,1]$, and also for $1 < s < \frac32$ under the additional
assumption $V \in L^{3/2, 1}$.
The case $r=\infty$ is included for all $0\leq s<3/2$ and the case $q=\infty$
for $1<s<\frac 32$.

For $\frac 12<s<\frac 32$, the following reversed-norm Strichartz
inequalities are also valid:  For all
$\theta$, $r$ with $\max(0, 1-s) \leq \theta \leq \min(\frac1 2, \frac 3 2 - s)$ 
and $\theta + 
\frac{1}{r} = \frac32-s$,
\be \lb{eq:StrichKato}
\Big\|\frac{e^{it\sqrt H} P_c}{H^{s/2}} f\Big\|_{(\Kato^{\theta})^*_x L^{r, 
2}_t} 
\les \|f\|_{L^2},\\
\ee 
which includes the endpoint case $L^\infty_x L^{\frac 2 {3-2s}, 2}_t$ for $1\leq 
s<\frac 32$ and $\theta = 0$. Furthermore
\be \lb{maximal}
\Big\|\frac{e^{it\sqrt H} P_c}{H^{s/2}} f\Big\|_{L^{q,2}_x L^r_t}  \les  
\|f\|_{L^2}
\ee
for all pairs $(q,r)$ with $\max(6, \frac 6{3-2s}) \leq q \leq \frac 3{\max(1-s, 
0)}$, $q \ne \infty$, and $\frac{3}{q} + \frac{1}{r} = \frac32-s$. 
This includes the endpoint $L^{\frac 6{3-2s}, 2}_x L^\infty_t$ when $1 \leq s < 
\frac 32$.

Solutions to the homogeneous wave equation~\eqref{eq:wave} satisfy
\begin{equation} \label{eq:StrichSobolev}
\Bignorm[\cos(t\sqrt{H})P_c f_0 + \frac{\sin(t\sqrt{H})P_c}{\sqrt{H}} f_1][
(\Kato^\theta)^*_xL^r_t \cap L^{q,2}_x L^r_t]
\les  \norm[f_0][\dot{H}^s] + \norm[f_1][\dot H^{s-1}]
\end{equation}
for $s \in [0,1]$ and also for $1 < s < \frac32$ under the additional assumption
$V \in L^{3/2, 1}$.
\end{theorem}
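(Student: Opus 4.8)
The plan is to reduce all of Theorem~\ref{thm_maximal} to mapping properties of the single propagator $e^{it\sqrt H}H^{-s/2}P_c$ out of $L^2_x$, and to obtain those from the dispersive and reversed-norm inhomogeneous estimates already available in Theorem~\ref{main_theorem} and Corollary~\ref{cor_interpolation}. Using $\cos(t\sqrt H)=\tfrac12(e^{it\sqrt H}+e^{-it\sqrt H})$, $\tfrac{\sin(t\sqrt H)}{\sqrt H}=\tfrac1{2i}(e^{it\sqrt H}-e^{-it\sqrt H})H^{-1/2}$, and factoring $H^{-1/2}=H^{-s/2}H^{(s-1)/2}$, the evolution estimates \eqref{eq:Strichartz} and \eqref{eq:StrichSobolev} follow from \eqref{eq:Strichartz1}, \eqref{eq:StrichKato} and \eqref{maximal} provided we know the spectral-Sobolev equivalences $\|H^{s/2}P_cf\|_2\les\|f\|_{\dot H^s}$ and $\|H^{(s-1)/2}P_cf\|_2\les\|f\|_{\dot H^{s-1}}$. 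For $s\in[0,1]$ these hold unconditionally: $s=0$ is trivial, $s=1$ is the equivalence $\langle|H|f,f\rangle\simeq\|f\|_{\dot H^1}^2$ recorded before the statement, and the remaining (including negative) exponents follow by complex interpolation and duality on the range of $P_c$. For $1<s<\tfrac32$ one uses instead the $L^{3/2,1}$ form of Lemma~\ref{comp_lemma}, whose proof requires resolvent bounds available only when $V\in L^{3/2,1}$; this is exactly why that hypothesis reappears for $s>1$.

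The standard estimate \eqref{eq:Strichartz1} is obtained by a Littlewood--Paley decomposition in the spectral variable of $H$ (legitimate here because zero is a regular point, so that $H$ admits a good functional calculus) followed, on each dyadic block, by the abstract Keel--Tao theorem: on a block the propagator is bounded on $L^2$, and, by $\|\tfrac{\sin(t\sqrt H)}{\sqrt H}P_cf\|_{L^\infty_x}\les|t|^{-1}\|\nabla f\|_1$ and $\|\cos(t\sqrt H)P_cf\|_{L^\infty_x}\les|t|^{-1}\|\Delta f\|_1$ together with Bernstein's inequality and the $H$--$\Delta$ equivalence, it maps $L^1_x\to L^\infty_x$ with $|t|^{-1}$ decay. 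Summing the blocks with the square function for $H$ gives \eqref{eq:Strichartz1}. The restriction $0\le1/r<s/2$ keeps $(r,q)$ strictly inside the wave-admissible region --- the excluded boundary $1/r+1/q=\tfrac12$ corresponds precisely to $1/r=s/2$ --- so no Keel--Tao endpoint is met; the case $r=\infty$ is instead Sobolev embedding $\dot H^s\hookrightarrow L^{6/(3-2s)}$ composed with the $L^2$ isometry, valid for all $0\le s<\tfrac32$.

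The new reversed-norm estimates \eqref{eq:StrichKato} and \eqref{maximal} are proved by the $TT^*$ principle: $\|H^{-s/2}e^{it\sqrt H}P_cf\|_{Y}\les\|f\|_2$ is equivalent to the boundedness of the full-line operator $\int_{\R}H^{-s}e^{i(t-t')\sqrt H}P_cF(t')\,dt'$ from $Y^*$ into $Y$, and the full-line operator is controlled by its retarded part --- which is exactly what Corollary~\ref{cor_interpolation} provides (estimates \eqref{eq:sinKato}, \eqref{eq:cosKato}) --- plus its time-reverse. Decomposing $H^{-s}e^{i\tau\sqrt H}$ into $H^{-s}\cos(\tau\sqrt H)$ and $H^{-s}\sin(\tau\sqrt H)$ and interpolating in the homogeneity (a Stein analytic-family argument applied to $H^{-z}$, using that the imaginary powers $H^{is}$ act boundedly, with at most polynomial growth, on the spaces in play) reduces matters to the cases $s=1$ (the $\tfrac{\cos}{H}$ kernel) and $s=\tfrac12$ (the $\tfrac{\sin}{\sqrt H}$ kernel) that appear verbatim in Corollary~\ref{cor_interpolation}; the one-dimensional Hardy--Littlewood--Sobolev inequality in $t$, in its Lorentz-endpoint form $L^{r',2}*|t|^{-(1/r'-1/r)}\hookrightarrow L^{r,2}$, supplies the $L^{r,2}_t$ refinement, and the exponent constraints $\max(0,1-s)\le\theta\le\min(\tfrac12,\tfrac32-s)$, $\theta+1/r=\tfrac32-s$ are precisely those for which the interpolation and the fractional integration close. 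The maximal estimate \eqref{maximal} then follows from \eqref{eq:StrichKato} through $(\Kato^\theta)^*\subset L^{3/\theta,\infty}$ and real interpolation across $\theta$, which promotes $L^{q,\infty}_x$ to $L^{q,2}_x$; at the critical endpoint $q=6/(3-2s)$, $r=\infty$ (only present for $1\le s<\tfrac32$) the strong bound is not free, and one extracts $L^{q,2}_x$ from the bilinear form of the $TT^*$ argument fed by $\|\tfrac{\sin(t\sqrt H)}{\sqrt H}P_cf\|_{L^\infty_xL^1_t}\les\|f\|_\Kato$, $\|\tfrac{\sin(t\sqrt H)}{\sqrt H}P_cf\|_{\Kato^*_xL^1_t}\les\|f\|_1$ and their interpolants --- this is the step that beats the Rogers--Villaroya maximal bound. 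Feeding \eqref{eq:StrichKato} and \eqref{maximal} into the reduction of the first paragraph yields \eqref{eq:StrichSobolev}.

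The main obstacle is precisely the endpoint behaviour: the Lorentz refinements $L^{r,2}_t$ in \eqref{eq:StrichKato} and $L^{q,2}_x$ in \eqref{maximal} do not come from a plain fractional-integration inequality but require its weak-type/real-interpolation endpoint, and the maximal endpoint $L^{6/(3-2s),2}_xL^\infty_t$, where no strong-type estimate is available off the shelf, is exactly where the sharpening of Rogers--Villaroya occurs. Secondary difficulties are the extensive exponent bookkeeping --- checking that $\theta$, $r$, $q$, $s$ and all the scaling relations stay mutually compatible through every interpolation, and that the forbidden Keel--Tao pair is never hit --- together with the $s>1$ case of the spectral-Sobolev equivalence, which is the reason for the extra hypothesis $V\in L^{3/2,1}$.
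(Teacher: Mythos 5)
Your overall architecture --- reduce \eqref{eq:Strichartz} and \eqref{eq:StrichSobolev} to mapping properties of $H^{-s/2}e^{it\sqrt H}P_c$ out of $L^2$ via the spectral-Sobolev equivalences (with $V\in L^{3/2,1}$ needed only for $s>1$), and prove the reversed-norm bounds \eqref{eq:StrichKato}, \eqref{maximal} by $TT^*$ fed by the inhomogeneous estimates of Corollary~\ref{cor_interpolation} --- is the paper's strategy. But two of your key steps rest on machinery that is not available under the stated hypotheses, and in both places the paper does something different precisely to avoid this.

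First, for \eqref{eq:Strichartz1} you invoke a Littlewood--Paley decomposition in the spectral variable of $H$, Bernstein inequalities for $H$, and Keel--Tao on each dyadic block. Square-function and Bernstein estimates for $-\Delta+V$ with $V$ merely in $\Kato_0$ (and with negative eigenvalues projected away by $P_c$) are not established anywhere in the paper, and the authors explicitly flag in a remark that a self-contained treatment of the sharp-admissible case ``likely demands several extra steps (e.g.\ Littlewood--Paley decomposition)'' --- i.e.\ they deliberately do not take this route. The theorem restricts to $0\le 1/r<s/2$ exactly so that a softer argument closes: at $s=1$ the $TT^*$ operator is convolution in $t$ with $\frac{\cos(t\sqrt H)}{H}P_c$, handled by the dispersive bound \eqref{eq:dispLorentz} plus Young/Hardy--Littlewood--Sobolev (Ginibre--Velo style, $r>2$); at $s=0$ one uses the $L^2$ conservation law; and Stein interpolation in $s$ (using only that imaginary powers of $HP_c$ are partial isometries on $L^2$, the \emph{source} space) fills in $0<s<1$. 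Your claim that the restriction $1/r<s/2$ ``keeps $(r,q)$ strictly inside the wave-admissible region'' also misidentifies the role of the constraint: Keel--Tao with $|t|^{-1}$ decay would give the full non-endpoint sharp-admissible range, which is strictly larger than what is claimed; the constraint marks the boundary of what the non-Littlewood--Paley argument can reach.

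Second, for \eqref{eq:StrichKato}--\eqref{maximal} with general $\frac12<s<\frac32$ you propose a Stein analytic-family argument in $H^{-z}$ to interpolate between the cases $s=1$ and $s=\frac12$. Besides the fact that complex interpolation cannot by itself produce the Lorentz refinements $L^{r,2}_t$ and $L^{q,2}_x$ (which you acknowledge), the analytic family would require control of $H^{i\tau}$ on the reversed space-time spaces $(\Kato^\theta)^*_xL^r_t$, which is not available. The paper instead computes the explicit radial kernel $K_s(r,t)$ of the \emph{free} propagator $\frac{\cos(t\sqrt{-\Delta})}{(-\Delta)^s}$ for each $s$, extracts the bounds $\|K_s(r,\cdot)\|_{L^1_t}\les r^{-(2-2s)}$, $\|K_s(r,\cdot)\|_{L^{1/(2-2s),\infty}_t}\les r^{-1}$ (and the weighted $L^\infty_t$ variants for $s>1$), and transfers them to $H$ by composing with the weighted Wiener-algebra bounds $\rho^{\theta_2-\theta_1}S^*\in\U_{(\Kato^{\theta_1})^*,(\Kato^{\theta_2})^*}$ of Corollary~\ref{general}; real interpolation (Hunt) at the kernel level then yields the $L^{q,2}_x$ and $L^{r,2}_t$ refinements and the maximal endpoint $L^{6/(3-2s),2}_xL^\infty_t$. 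Without that explicit kernel computation and the weighted transference, the $s\ne 1$ cases and the endpoint sharpening of Rogers--Villarroya do not follow from what you have assembled.
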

In several non-endpoint estimates, $L^p$ spaces can be strengthened to $L^{p, 
2}$, as shown in the proof.

\begin{remark}
There is a Kato-class version of~\eqref{eq:Strichartz} analogous
to~\eqref{eq:StrichKato}.  These estimates are described in terms of
 Schechter's spaces $M_{3-2s\theta, s+1}$ (as defined in~\cite{Si82}), which
form a complex interpolation family between $\Kato^\theta$ and $L^2(\R^3)$.
\end{remark}

\begin{remark}
Theorem~\ref{thm_maximal} is presented as a consequence of
Corollary~\ref{cor_interpolation}; it is not a complete list of valid
Strichartz and  reversed-norm Strichartz inequalities.  
A self-contained proof of the ``sharp-admissible" case $r=\frac{2}{s}$, $s < 1$
in~\eqref{eq:Strichartz} likely demands several extra steps (e.g. 
Littlewood-Paley decomposition) to make $L^\infty$ safe for complex
interpolation.
\end{remark}

On the other hand, the main technical step (Theorem~\ref{thm:goal_lattice})
makes it possible to transfer reversed-norm estimates from the Laplacian to $H$
regardless of their initial proof.  This allows us to confirm Strichartz 
inequalities
over half of the sharp-admissible range.

\begin{theorem} \label{thm:sharpStrichartz}
Consider a real-valued potential $V \in L^{3/2,1}(\R^3)$ such that
$-\Delta + V$ has no eigenvalues on $[0,\infty)$ and no resonances at zero.
then for all $0 \leq s \leq \frac12$,
\begin{align} \label{eq:sharpStrichartz1}
\Bignorm[\frac{e^{it\sqrt{H}}P_c}{H^{s/2}}][L^{2/s}_tL^{2/(1-s)}_x]
&\les \norm[f][2] \qquad {\textit and} \\
\Bignorm[\cos(t\sqrt{H})P_c f_0 + \frac{\sin(t\sqrt{H})P_c}{\sqrt{H}} f_1][
L^{2/s}_tL^{2/(1-s)}_x]
&\les  \norm[f_0][\dot{H}^s] + \norm[f_1][\dot H^{s-1}]. 
\label{eq:sharpStrichartz}
\end{align}
\end{theorem}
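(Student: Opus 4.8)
The plan is to derive \eqref{eq:sharpStrichartz1} by complex interpolation between the two ends $s=0$ and $s=\tfrac12$ of the stated range, after transferring one classical flat estimate to $H$; then \eqref{eq:sharpStrichartz} follows by rewriting the half-wave propagators. Indeed $\cos(t\sqrt H)P_c=\tfrac12(e^{it\sqrt H}+e^{-it\sqrt H})P_c$ and $\tfrac{\sin(t\sqrt H)P_c}{\sqrt H}=\tfrac1{2i}H^{-1/2}(e^{it\sqrt H}-e^{-it\sqrt H})P_c$, so applying \eqref{eq:sharpStrichartz1} for $e^{\pm it\sqrt H}P_c$ (the $-$ sign by $t\mapsto-t$) with $f$ replaced by $H^{s/2}P_cf_0$, respectively $H^{(s-1)/2}P_cf_1$, and invoking the norm equivalences $\||H|^{\sigma/2}P_cg\|_2\sim\|P_cg\|_{\dot H^\sigma}$ for $\sigma=s\in[0,\tfrac12]$ and---by duality---for $\sigma=s-1\in[-1,-\tfrac12]$ (which come from interpolating $\la|H|f,f\ra\sim\|f\|_{\dot H^1}^2$ with $L^2$), together with the boundedness of the point-spectral projection on $\dot H^\sigma$ (its range is spanned by Schwartz-class eigenfunctions under the present hypotheses), yields \eqref{eq:sharpStrichartz}.

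For \eqref{eq:sharpStrichartz1} at $s=0$ there is nothing to prove: $e^{it\sqrt H}$ is unitary on the range of $P_c$, so $\|e^{it\sqrt H}P_cf\|_{L^2_x}=\|P_cf\|_2$ for every $t$ and hence $\|e^{it\sqrt H}P_cf\|_{L^\infty_tL^2_x}\le\|f\|_2$; note this is a genuine pointwise-in-$t$ bound. At $s=\tfrac12$ the target is the $L^4_{t,x}$ Strichartz estimate $\|H^{-1/4}e^{\pm it\sqrt H}P_cf\|_{L^4_{t,x}}\les\|f\|_2$, and this is the single place where the main technical result enters: since $L^4_{t,x}=L^4_xL^4_t$ is a reversed-space-time norm, Theorem~\ref{thm:goal_lattice} carries the classical flat bound $\|(-\Delta)^{-1/4}e^{it\sqrt{-\Delta}}g\|_{L^4_{t,x}}\les\|g\|_2$ over to $H$ with the projection $P_c$. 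I expect this to be the main obstacle: one must check that $L^4_x$ lies in the class of spatial norms to which Theorem~\ref{thm:goal_lattice} applies (if not directly, a short interpolation on the $x$-variable passing through the reversed-norm estimates of Theorem~\ref{thm_maximal} should still produce $L^4_xL^4_t$), and this endpoint is genuinely missing from \eqref{eq:Strichartz1}, \eqref{maximal} and Corollary~\ref{cor_interpolation}, so there is no shortcut around it.

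To interpolate, fix $f\in L^2$ and set $T_zf:=e^{z^2}H^{-z/4}e^{it\sqrt H}P_cf$ on the strip $0\le\Re z\le1$. On $\Re z=0$, since $H^{-iy/4}$ is unitary on the range of $P_c$, $\|T_{iy}f(t)\|_{L^2_x}=e^{-y^2}\|P_cf\|_2$ for every $t$, hence $\|T_{iy}f\|_{L^\infty_tL^2_x}\le\|f\|_2$ with admissible decay in $y$; on $\Re z=1$ the $L^4$ estimate gives $\|T_{1+iy}f\|_{L^4_{t,x}}\les\|f\|_2$. With $s=\tfrac12\Re z$ the interpolated exponents are exactly $L^{2/s}_tL^{2/(1-s)}_x$, since $(1-2s)\cdot0+2s\cdot\tfrac14=\tfrac s2$ and $(1-2s)\cdot\tfrac12+2s\cdot\tfrac14=\tfrac{1-s}2$. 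Stein's complex interpolation theorem then gives \eqref{eq:sharpStrichartz1}; the only point requiring care is the $L^\infty_t$ endpoint, but it is harmless here because the $s=0$ bound is pointwise in $t$ (indeed $\|T_{iy}f(t)\|_{L^2_x}$ does not depend on $t$), so the three-lines argument may be run fiberwise in $t$ and the outcome integrated afterwards, with no need to identify a mixed-norm complex interpolation space having an $L^\infty$ endpoint.

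Finally, the restriction $s\le\tfrac12$ is intrinsic to this route rather than an artifact: the complementary half $s\in(\tfrac12,1)$ of the sharp-admissible range would have to be obtained by interpolating the $s=\tfrac12$ endpoint against the $s=1$ endpoint $L^2_tL^\infty_x$, and that estimate fails for the wave equation in $\R^3$. Treating the full range self-contained would instead require a Littlewood--Paley decomposition, as already remarked.
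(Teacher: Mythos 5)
Your proposal follows essentially the same route as the paper: both arguments rest on the $s=0$ endpoint $L^\infty_tL^2_x$ (spectral theorem), the $s=\tfrac12$ endpoint $L^4_{x,t}$ obtained by composing the free $L^4$ Strichartz bound with the Wiener-theorem inverse from Theorem~\ref{thm:goal_lattice}, and then complex interpolation. The only notable cosmetic difference is that the paper derives \eqref{eq:sharpStrichartz} first (working directly with $\cos(t\sqrt H)P_c$ and $\sin(t\sqrt H)P_c/\sqrt H$ and the free propagator kernels $\check R_0^+$, $(\lambda R_0^+)^\vee$) and obtains \eqref{eq:sharpStrichartz1} from Lemma~\ref{lem:norm_equiv}, whereas you do the reverse; this is immaterial.

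Two small points worth correcting. First, your uncertainty about whether Theorem~\ref{thm:goal_lattice} covers $L^4_x$: it does not directly, since $4 > \tfrac32$, but the paper applies it to $X = L^{4/3}$ (which lies in the allowed range $1<p<\tfrac32$ once $V \in L^{3/2,1}$) to get $S = (\1+T^-)^{-1} \in \U_{L^{4/3}}$, and then passes to adjoints to conclude $S^* \in \U_{L^4}$; no interpolation in $x$ is needed. Second, the ``run the three-lines argument fiberwise in $t$'' step is not correct as stated: the $\Re z = 1$ bound $\|T_{1+iy}f\|_{L^4_{x,t}}\les\|f\|_2$ is not a pointwise-in-$t$ bound, so the three-lines lemma cannot be applied to $t\mapsto F(z;t)$ for each fixed $t$. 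What you actually want is the Stein interpolation theorem in its bilinear-pairing formulation (pair against an analytic family $g_z$ with $g_{iy}\in L^1_tL^2_x$, $g_{1+iy}\in L^{4/3}_{x,t}$), which handles the $L^\infty_t$ endpoint without needing to identify a mixed-norm complex interpolation space; the paper simply asserts ``complex interpolation'' without dwelling on this, and indeed it is the $L^\infty_x$ endpoint at $s=1$ — not $L^\infty_t$ — that the authors flag as problematic in the remark you cite, which is why the theorem is capped at $s\le\tfrac12$ exactly as you observe.
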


The $L^{\frac 6 {3-2s}, 2}_x L^\infty_t$ endpoint
in~\eqref{eq:StrichSobolev} is a bound on the maximal function 
for the wave 
propagator, with the following consequence:
\begin{theorem}\label{limit}
Consider a real-valued potential $V \in \Kato_0$ on $\R^3$ such that $-\Delta+V$ 
has no eigenvalues on $[0,\infty)$ and no resonance at zero. Then 
for $f_0 \in \dot{H}^1$ and $f_1 \in L^2$
\begin{equation} \label{max}
\lim_{t\to 0} \Big(\cos(t\sqrt{H})f_0 + 
\frac{\sin(t\sqrt{H})}{\sqrt{H}}f_1\Big)(x) = f_0(x)
\end{equation}
at almost every $x \in \R^3$.
\end{theorem}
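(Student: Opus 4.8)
The plan is to deduce Theorem~\ref{limit} from the maximal estimate contained in Theorem~\ref{thm_maximal}, specifically the $L^{6,2}_x L^\infty_t$ endpoint of~\eqref{eq:StrichSobolev} taken at $s=1$ (so that $\theta = 1/2$, $r = \infty$, $q = 6$). Indeed, at $s=1$ the estimate reads
\begin{equation*}
\Bignorm[\cos(t\sqrt H)P_c f_0 + \frac{\sin(t\sqrt H)P_c}{\sqrt H}f_1][L^{6,2}_x L^\infty_t] \les \norm[f_0][\dot H^1] + \norm[f_1][L^2].
\end{equation*}
Writing $u(x,t)$ for the solution and $(Mu)(x) := \sup_{t}|u(x,t)|$ for the associated maximal function, this says $\norm[Mu][L^{6,2}_x] \les \norm[f_0][\dot H^1] + \norm[f_1][L^2]$; in particular $M$ is bounded from the energy space $\dot H^1\times L^2$ into $L^{6,2}(\R^3)$, hence into $L^{6,\infty}$, which is enough weak-type control to run the standard density argument.

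The key steps, in order, are as follows. First, establish pointwise convergence~\eqref{max} on a dense subclass of initial data --- e.g. $f_0 \in C_c^\infty$ together with, say, $f_0 \in D(H)$ so that $\sqrt H f_0, Hf_0 \in L^2$, and similarly $f_1$ --- where one can differentiate the spectral representation~\eqref{eq:formalsolution} in $t$ and read off $u(x,t)\to f_0(x)$ uniformly, or at least in $L^2_x$ with a quantitative rate; this is routine functional calculus. (One should note that $P_c$ may be dropped here because on $C_c^\infty$ the negative-eigenspace component $f_0 - P_cf_0$ contributes a smooth exponentially-growing-but-continuous-in-$t$ term that tends to its own value at $t=0$, so it does not affect the limit; alternatively one replaces $f_0$ by $P_cf_0$ throughout and notes $P_c$ maps the dense class into a dense subset of $P_c(\dot H^1\times L^2)$.) Second, for general $(f_0,f_1)$ in the energy space, approximate by $(f_0^{(n)}, f_1^{(n)})$ from the dense class and form the oscillation
\begin{equation*}
\Omega(x) := \limsup_{t\to 0} |u(x,t) - f_0(x)|.
\end{equation*}
Third, bound $\Omega \le M(u - u^{(n)})(x) + \limsup_{t\to 0}|u^{(n)}(x,t) - f_0^{(n)}(x)| + |f_0^{(n)}(x) - f_0(x)| = M(u-u^{(n)})(x) + 0 + |f_0^{(n)}(x)-f_0(x)|$. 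Fourth, apply the maximal bound to the first term and Sobolev embedding $\dot H^1 \hookrightarrow L^6$ to the last: for any $\lambda>0$,
\begin{equation*}
|\{\Omega > 2\lambda\}| \le |\{M(u-u^{(n)}) > \lambda\}| + |\{|f_0^{(n)}-f_0| > \lambda\}| \les \lambda^{-6}\big(\norm[f_0^{(n)}-f_0][\dot H^1] + \norm[f_1^{(n)}-f_1][L^2]\big)^6.
\end{equation*}
Letting $n\to\infty$ gives $|\{\Omega > 2\lambda\}| = 0$ for every $\lambda > 0$, hence $\Omega = 0$ a.e., which is exactly~\eqref{max}.

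The main obstacle is not any single hard estimate --- the heavy lifting is already done in Theorem~\ref{thm_maximal} --- but rather two bookkeeping points that must be handled carefully. The first is the role of $P_c$: Theorem~\ref{limit} is stated without $P_c$, so I must argue that the point-spectrum part of the evolution is harmless, which as noted above follows because on a dense class the eigenfunction components are smooth and depend continuously on $t$, vanishing-difference at $t=0$. The second, and genuinely delicate, point is that the $L^{6,2}_x L^\infty_t$ bound in~\eqref{eq:StrichSobolev} is asserted only for $s\le 1$ (and for $1<s<3/2$ under the extra hypothesis $V\in L^{3/2,1}$); at the endpoint $s=1$ one should double-check that $\theta = 1/2$ lies in the admissible window $\max(0,1-s)\le\theta\le\min(1/2,3/2-s)$, i.e. $0 \le 1/2 \le 1/2$, which it does, and that the pairing $q=6 = \tfrac{6}{3-2s}\big|_{s=1}$, $r=\infty$ satisfies $\tfrac3q+\tfrac1r = \tfrac32 - s = \tfrac12$, which also checks out --- so the endpoint is genuinely available under the stated hypothesis $V\in\Kato_0$ alone, with no need for $L^{3/2,1}$. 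Everything else is the classical Stein-type maximal-function-to-a.e.-convergence machinery.
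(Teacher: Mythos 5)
Your overall strategy --- deduce a.e.\ convergence from the $L^{6,2}_x L^\infty_t$ maximal bound via the standard density argument --- is the same as the paper's, and your bookkeeping is correct: at $s=1$ the parameters $\theta = 1/2$, $q=6$, $r=\infty$ are admissible in~\eqref{eq:StrichSobolev} under the hypothesis $V\in\Kato_0$ alone, and the point-spectrum contribution is a finite sum of terms smooth in $t$ so $P_c$ may be dropped. But there is a genuine gap in the step you dismiss as ``routine functional calculus,'' and it is precisely the part the paper works hardest on.

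The issue is establishing a.e.\ (or everywhere) convergence as $t\to 0$ on a dense subclass of initial data. You take $\tilde f_0, \tilde f_1 \in C^\infty_c$, keep the same rough operator $H = -\Delta + V$ with $V\in\Kato_0$, and assert that one can read off $u(x,t)\to \tilde f_0(x)$ ``uniformly, or at least in $L^2_x$ with a quantitative rate.'' The $L^2$ alternative is insufficient: $L^2$-norm convergence gives a.e.\ convergence only along subsequences, which does not feed into the oscillation argument (the $\limsup_{t\to 0}$ inside $\Omega$ runs over the continuum). Uniform convergence would need Sobolev control at a regularity level $s>3/2$, i.e.\ an estimate for $\|\cos(t\sqrt H)P_c\tilde f_0 - P_c\tilde f_0\|_{\dot H^s}$ with $s>3/2$. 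But Lemma~\ref{lem:norm_equiv} gives the equivalence $\||H|^{s/2}f\|_2 \sim \|f\|_{\dot H^s}$ only for $|s|\le 1$ under $V\in\Kato_0$, and only for $|s|<3/2$ even with $V\in L^{3/2,1}$; the domain of high powers of $H$ is simply not the expected Sobolev space for rough $V$, so there is no functional-calculus route to $L^\infty_x$ continuity in $t$.

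The paper's actual proof avoids this obstruction by approximating not only $(f_0,f_1)$ but also the potential, replacing $V$ by smooth compactly supported $\tilde V$. With both smooth, the Wiener-algebra representation $S^*=(\1+(T^-)^*)^{-1}\in\U_{L^\infty}$ acts by convolution in $t$ and therefore preserves Lipschitz continuity, which handles the sine evolution directly; the cosine evolution requires an extra Duhamel decomposition because the free cosine kernel has a jump discontinuity across $\{t=0\}$. Then a separate argument is needed to show that $\cos(t\sqrt{H})P_c f_0 - \cos(t\sqrt{\tilde H})\tilde P_c f_0$ is small in $L^6_x L^\infty_t$ when $\|V-\tilde V\|_\Kato$ is small, which the paper does by yet another $TT^*$ estimate. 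Your proposal would be correct if these steps were supplied, but as written, the dense-class convergence is a missing idea, not a routine verification.
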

More generally, such results hold for $\dot H^s$, $1 \leq s < 3/2$.

\subsection{History of the problem}
Strichartz estimates are a fundamental tool in the study of the wave equation. 
Other inequalities employed in its study include local energy decay, Morawetz 
estimates \cite{morawetz}, weighted Keel--Smith--Sogge-type estimates 
\cite{KSS}, and Killing-field-based, i.e.\ Klainerman--Sobolev \cite{Kla}, 
estimates. These techniques are not mutually exclusive and are often used 
together or even in combination.

Strichartz-type estimates only hold for the dispersive part of the evolution, 
corresponding to the projection on the continuous spectrum. The optimal rate of 
decay requires the absence of zero eigenvalues and resonances, i.e.\ of nonzero 
solutions of the equation $H\Psi  = 0$. Eigenvalues and resonances at zero, even 
when specifically excluded by means of a spectral projection, nevertheless lead 
to a lower rate of decay for the dispersive part of the evolution.

In the case of Schr\"{o}dinger's equation in one dimension, Kato-type smoothing 
estimates hold in a similar norm:
$$
\||\dl|^{1/2} e^{it\Delta} f\|_{L^{\infty}_x L^2_t} \les \|f\|_2.
$$
Such estimates have also been obtained previously by Kenig--Ponce--Vega 
\cite{kpv1}, \cite{kpv2} in one dimension for the Korteweg-deVries and Benjamin--Ono equations. Reversed-norm estimates are new in higher dimensions for the 
wave equation; see \cite{lizhang} for a related result for Schr\"{o}dinger's 
equation with radial data.

Strichartz estimates for the wave equation have a long history, going back to 
the works of Strichartz \cite{strichartz} and Segal \cite{segal} in $\R^2$. 
Previously-known Strichartz inequalities for the free wave equation (without 
potential) belong to Georgiev--Karadzhov--Visciglia \cite{georgiev}, Harmse 
\cite{harmse}, Kapitanski \cite{kapitanski}, Ginebre--Velo \cite{ginebre1} 
\cite{ginebre}, Oberlin \cite{oberlin}, Lindblad--Sogge \cite{liso}, and 
Nakamura--Ozawa \cite{nakamura}. In other settings we cite the work of 
Mockenhaupt--Seeger--Sogge \cite{mss}.

Keel--Tao \cite{tao} also obtained sharp Strichartz estimates for the free wave 
equation in $\R^4$ and higher dimensions --- and everything except the endpoint 
in $\R^3$. These estimates were further extended by Foschi \cite{foschi}, in the 
inhomogeneous case.

In this paper we seek to prove similar estimates for the cosine and sine 
evolutions
$\frac {\sin(t\sqrt H)P_c}{\sqrt H}$ and $\cos(t\sqrt H) P_c$ related to the 
perturbed Hamiltonian $H= {-\Delta} + V$.

Many such results were initially proved only for the free wave equation. 
However, the boundedness of wave operators shown by Yajima in \cite{yajima} 
implies that results in the free case can be extended to the case of a perturbed 
Hamiltonian $H={-\Delta}+V$, if the potential has sufficient decay: $|V(x)| \les 
\langle x \rangle^{-6-\epsilon}$ in $\R^3$. Still, Yajima's method does not 
apply to the results of Theorem \ref{main_theorem}, because the $L^p$ 
boundedness of the wave operators does not address the issue of reversed 
spacetime norms.

Strichartz estimates for the wave equation in $\R^d$ with critical potentials 
were obtained by Burq--Planchon--Stalker--Tahvildar-Zadeh \cite{burq}, 
\cite{burq2} and by D'Ancona--Pierfelice \cite{DaPi}.
The former results apply to potentials an with inverse-square pointwise bound,
$|V(x)| \les |x|^{-2}$, the primary examples of which lie outside $\Kato_0$.
Strichartz estimates are shown to fail in this larger class if there is no control over
the negative part of the potential; some additional smoothness in the
radial direction (where $x=r\omega$) is also assumed.  The dispersive
and Strichartz estimates in \cite{DaPi} apply
to to all $V \in \Kato_0$ whose negative part satisfies $\norm[V^-][\Kato] < 2\pi$.
In this paper the size bound on $V^-$ is replaced with the weaker hypothesis that
the operator $-\Delta + V$ has no eigenvalues or resonances along its essential
spectrum.

The current paper's results are also comparable to those of Rogers--Villarroya. 
In \cite{rogers}, these authors studied maximal operators for the wave equation 
in $\R^d$, defined by
$$
M_1 f(x) := \sup_{t \in [0, 1]} |(e^{it\sqrt \Delta} f)(x)| \text{ and } M f(x) 
:= 
\sup_{t \in \R} |(e^{it\sqrt{{-\Delta}}} f)(x)|.
$$
In particular they showed estimates of the form
$$
\|M f\|_{L^q} \les \|f\|_{H^s},
$$
where $q \geq \frac {2(d+1)}{d-1}$ and $s>d(1/2-1/q)$. The estimate 
(\ref{maximal}) implies that for $1 \leq s < \frac 32$
$$
\|\sup_t |(e^{it \sqrt H} P_c f)(x)|\|_{L^{\frac 6{3-2s}, 2}_x} \les \|f\|_{\dot 
H^s_x}.
$$
This corresponds to an endpoint not treated in \cite{rogers}. We also prove this 
inequality for the general case of $H={-\Delta}+V$, in addition to the free wave 
equation.

Some of the results obtained here for $\K_0$ potentials can be extended to more 
general Kato-class potentials, including singular measures;
see, in this direction, the results of \cite{michael}.

Many papers are dedicated to Strichartz estimates for Schr\"{o}dinger's 
equation, whose proof is largely similar to that of Strichartz estimates for the 
wave equation.
Indeed, the key step of using an operator-valued Wiener $L^1$-inversion theorem
was originally applied to
Schr\"{o}dinger's equation by the authors in \cite{bec}, \cite{becgol}, 
and \cite{michael}.  The abstract Wiener theorem presented in
Section~\ref{sec:Wiener} is borrowed largely intact from these works,
however the wave equation encourages it to appear in several new disguises
including a weighted-$L^1$ version (Proposition~\ref{thm:2ndgoal})
not previously considered.

\subsection{Nonlinear applications}
\begin{proposition}
For $V \in L^{3/2, 1}$, consider the energy-critical equation
$$
\partial^2_t f - \Delta f + V f = F \pm f^5, (f(0), \partial_t f(0)) = (f_0, 
f_1).
$$
Assume that $H={-\Delta}+V$ has neither eigenvalues, nor resonances. Then for 
sufficiently small $(f_0, f_1) \in \dot H^1 \times L^2$ and $F \in L^{6/5, 2}_x 
L^\infty_t$, there exists a unique solution $f \in L^{6, 2}_x L^{\infty}_t$.

When $(f_0, f_1) \in \dot H^1 \times L^2$ and $F \in L^{6/5, 2}_x 
L^\infty_t \cap L^{3/2, 1}_x L^2_t$, the solution $f$ is in $L^{6, 2}_x 
L^{\infty}_t \cap L^{\infty}_x L^2_t$.
\end{proposition}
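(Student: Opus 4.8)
The plan is a contraction-mapping argument carried out entirely in reversed space-time norms, using the linear estimates of Theorem~\ref{main_theorem}, Corollary~\ref{cor_interpolation}, and Theorem~\ref{thm_maximal} at the scaling-critical exponent $s=1$. Since $H = -\Delta+V$ is assumed to have neither eigenvalues nor resonances, its spectrum is purely absolutely continuous, so $P_c = \mathrm{Id}$ and the solution is the fixed point of
$$
\Phi(f)(t) = \cos(t\sqrt H)\,f_0 + \frac{\sin(t\sqrt H)}{\sqrt H}\,f_1 + \int_0^t \frac{\sin((t-s)\sqrt H)}{\sqrt H}\bigl(F(s) \pm f(s)^5\bigr)\dd s .
$$
For the first assertion I would work in $X := L^{6,2}_x L^\infty_t$. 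The data term is controlled by the $s=1$ endpoint of~\eqref{eq:StrichSobolev} (there $q = \tfrac{6}{3-2s} = 6$ and $r=\infty$, $\theta = \tfrac12$), giving $\|\cos(t\sqrt H)f_0 + \tfrac{\sin(t\sqrt H)}{\sqrt H}f_1\|_X \les \|f_0\|_{\dot H^1} + \|f_1\|_{L^2}$; the Duhamel term with forcing $F$ is handled by~\eqref{eq:sinLorentz} with $(p,q,r) = (\tfrac65,6,\infty)$, $\tfrac1p - \tfrac1q = \tfrac23$, which gives a bound by $\|F\|_{L^{6/5,2}_x L^\infty_t}$.

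The one genuinely nonlinear ingredient is the estimate of the quintic term. By the homogeneity of Lorentz norms, $\big\||f|^5\big\|_{L^{6/5,2/5}_x L^\infty_t} = \|f\|_{L^{6,2}_x L^\infty_t}^5$, and $L^{6/5,2/5}_x \hookrightarrow L^{6/5,2}_x$, so $\|f^5\|_{L^{6/5,2}_x L^\infty_t} \les \|f\|_X^5$ lies in the input class of the Duhamel estimate above; hence $\|\Phi(f)\|_X \les \|f_0\|_{\dot H^1} + \|f_1\|_{L^2} + \|F\|_{L^{6/5,2}_x L^\infty_t} + \|f\|_X^5$. For the difference one uses $|f^5 - g^5| \les (|f|^4 + |g|^4)|f-g|$ together with an O'Neil-type H\"older inequality $L^{3/2,1/2}_x \cdot L^{6,2}_x \hookrightarrow L^{6/5,2/5}_x$ and $\big\||f|^4\big\|_{L^{3/2,1/2}_x L^\infty_t} = \|f\|_X^4$, yielding $\|\Phi(f)-\Phi(g)\|_X \les (\|f\|_X^4 + \|g\|_X^4)\,\|f-g\|_X$. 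Thus if $\|f_0\|_{\dot H^1} + \|f_1\|_{L^2} + \|F\|_{L^{6/5,2}_x L^\infty_t} \le \delta$ with $\delta$ small, $\Phi$ is a contraction on the ball of radius $2C\delta$ in $X$, which produces the solution; uniqueness in that ball is immediate, and uniqueness in all of $X$ follows from the a priori bound $\|f\|_X \les \delta + \|f\|_X^5$ by a standard continuity argument.

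For the second assertion, assume in addition $F \in L^{3/2,1}_x L^2_t$ and rerun the same scheme in $Y := L^{6,2}_x L^\infty_t \cap L^\infty_x L^2_t$. For the new component $L^\infty_x L^2_t$, the data term is bounded by the $\theta = 0$, $s=1$ endpoint of~\eqref{eq:StrichKato} (equivalently~\eqref{eq:StrichSobolev}), namely $L^\infty_x L^2_t$, and the Duhamel term by the inhomogeneous bound $\big\|\int_{t'<t} \tfrac{\sin((t-t')\sqrt H)}{\sqrt H} G(t')\dd t'\big\|_{L^\infty_x L^2_t} \les \|G\|_{\Kato_x L^2_t}$ from Theorem~\ref{main_theorem}, recalling that $L^{3/2,1} \subset \Kato$. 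For the quintic term I would split $|f|^5 = |f|^4\,|f|$: pointwise in $x$, $\|f(x,\cdot)^5\|_{L^2_t} \le \|f(x,\cdot)\|_{L^\infty_t}^4\,\|f(x,\cdot)\|_{L^2_t}$, so H\"older in the $x$ variable ($L^{3/2,1/2}_x \cdot L^\infty_x \hookrightarrow L^{3/2,1/2}_x \hookrightarrow L^{3/2,1}_x \hookrightarrow \Kato_x$) gives $\|f^5\|_{\Kato_x L^2_t} \les \|f\|_{L^{6,2}_x L^\infty_t}^4\,\|f\|_{L^\infty_x L^2_t} \le \|f\|_Y^5$, with the analogous difference bound. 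Hence $\Phi$ is again a contraction on a small ball of $Y$, and by the uniqueness already established its fixed point coincides with the solution from the first part, so that $f \in L^\infty_x L^2_t$ as claimed.

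The analytic substance is entirely in the linear estimates already proved; granting those, the remaining work is routine. The point to watch is that the quintic power is exactly energy-critical, so the exponent arithmetic must close on the nose: $\tfrac56 = \tfrac16 + \tfrac23$ for the Duhamel gain, $\tfrac56 = \tfrac46 + \tfrac16$ for the H\"older split of $|f|^5$, and $f^5 \in L^{3/2,1}_x L^2_t$ must land precisely in the input space of the $L^\infty_x L^2_t$ inhomogeneous estimate --- and all of this has to survive the slightly unusual bookkeeping with Lorentz second indices below $1$ (where one works with an equivalent norm, or with the embedding into $L^{p,2}$). The only other subtlety is that the reversed norm $L^{6,2}_x L^\infty_t$ is global in time and cannot be made small by shrinking the time interval, so the passage from uniqueness in a ball to uniqueness in the full space is carried out by a connectedness argument rather than a local-in-time one; I expect this to be the most delicate step.
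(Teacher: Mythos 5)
Your proposal matches the paper's proof in essence: both run a contraction argument in $L^{6,2}_x L^\infty_t$ (and $L^\infty_x L^2_t$ for the second part), using the homogeneous reversed-norm Strichartz bounds at $s=1$ for the data terms, the inhomogeneous estimates $\|F\|_{L^{6/5,2}_x L^\infty_t} \mapsto L^{6,2}_x L^\infty_t$ and $\|F\|_{\Kato_x L^2_t} \mapsto L^\infty_x L^2_t$ for the Duhamel term, and precisely the nonlinear bounds $\|f^5\|_{L^{6/5,2}_x L^\infty_t} \le \|f\|_{L^{6,2}_x L^\infty_t}^5$ and $\|f^5\|_{L^{3/2,1}_x L^2_t} \le \|f\|_{L^{6,2}_x L^\infty_t}^4 \|f\|_{L^\infty_x L^2_t}$ that close the scheme. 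Your remarks on the Lorentz second-index bookkeeping and on the difficulty of extending uniqueness beyond the small ball are reasonable elaborations of points the paper's own very terse proof leaves implicit.
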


By the conservation of energy, in both cases $(f, \partial_t f) \in L^{\infty}_t 
\dot H^1_x \times L^{\infty}_t L^2_x$.

Similar results hold for $\dot H^s$-critical wave equations with $1 \leq s < 
3/2$.

\begin{proof} By Strichartz estimates (Theorem \ref{thm_maximal}) we obtain
$$
\|\cos(t \sqrt H) P_c f\|_{L^{\infty}_{x} L^2_t \cap L^{6, 2}_x L^\infty_t} \les 
\|f\|_{\dot H^1}.
$$
Likewise,
$$
\Big\|\frac {\sin(t \sqrt H) P_c}{\sqrt H} f(x)\Big\|_{L^{\infty}_x L^2_t \cap 
L^{6, 2}_x L^\infty_t} \les 
\|f\|_2
$$
and by Theorem \ref{main_theorem} and Corollary \ref{cor_interpolation}
$$\begin{aligned}
\Big\|\int_{s<t} \frac {\sin((t-s) \sqrt H) P_c}{\sqrt H} F(x, s) \dd 
s\Big\|_{L^{\infty}_x L^2_t} &\les \|F\|_{\Kato_x L^2_t} \les \|F\|_{L^{3/2, 
1}_x L^2_t},\\
\Big\|\int_{s<t} \frac {\sin((t-s) \sqrt H) P_c}{\sqrt H} F(x, s) \dd 
s\Big\|_{L^{6, 2}_x L^\infty_t} &\les \|F\|_{L^{6/5, 2}_x L^\infty_t}.
\end{aligned}$$
We retrieve the solution by means of a fixed-point scheme, based on the 
Strichartz inequalities above. Note that
$$
\|f^5\|_{L^{6/5, 2}_x L^\infty_t} \leq \|f\|^5_{L^{6, 2}_x L^\infty_t}
$$
and
$$
\|f^5\|_{L^{3/2, 1}_x L^2_t} \leq \|f\|^4_{L^{6, 2}_x L^\infty_t} 
\|f\|_{L^{\infty}_x L^2_t}.
$$
\end{proof}

\section{Definitions and basic estimates}

The global Kato norm, which defines our admissible class of potentials $V(x)$, 
is highly compatible with
Sobolev spaces based on $L^1(\R^3)$ or $L^\infty(\R^3)$.  Let $\dot{W}^{1,1}$ 
indicate the completion of
compactly supported test functions under the norm $\norm[f][\dot{W}^{1,1}] := 
\norm[\nabla f][1]$.

\begin{lemma}\lb{kato_w11} If $f \in \dot{W}^{1,1}$  then $f \in \Kato_0$ and 
$\|f\|_{\K} \leq \|f\|_{\dot W^{1, 1}}$.
\end{lemma}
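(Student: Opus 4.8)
The plan is to prove the slightly stronger pointwise-in-$y$ bound
$$
\int_{\R^3} \frac{|f(x)|}{|x-y|}\,dx \le \tfrac12 \|\nabla f\|_{L^1(\R^3)}
\qquad\text{for every } y \in \R^3,
$$
from which $\|f\|_{\K}\le \tfrac12\|\nabla f\|_1 \le \|f\|_{\dot W^{1,1}}$ follows by taking the supremum over $y$. First I would reduce to $f \in C_c^\infty(\R^3)$ by density: if $f_n \to f$ in $\dot W^{1,1}$ with $f_n \in C_c^\infty$, the displayed inequality applied to $f_n - f_m$ shows that $(f_n)$ is Cauchy in the Kato space and hence converges there; the Sobolev embedding $\dot W^{1,1}(\R^3)\hookrightarrow L^{3/2}(\R^3)$ forces the limit to coincide with $f$, and since each $f_n$ is bounded and compactly supported this simultaneously places $f$ in $\Kato_0$ (not merely gives it a finite Kato norm).

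One should resist the temptation to argue from the Newton-potential representation $f = c\,|x|^{-1}\ast(-\Delta f)$, integrated by parts into the form $f(x) = \tfrac1{4\pi}\int \frac{(x-z)\cdot\nabla f(z)}{|x-z|^{3}}\,dz$: taking absolute values there and integrating against $|x-y|^{-1}$ runs into the logarithmically divergent convolution $|x|^{-1}\ast|x|^{-2}$ at infinity, so the cancellation in the unit vector $(x-z)/|x-z|$ cannot be discarded. Instead, for $f\in C_c^\infty$ and a fixed $y$, translate so that $y=0$ and pass to polar coordinates about the origin, $x=\rho\sigma$ with $\sigma\in S^2$:
$$
\int_{\R^3}\frac{|f(x)|}{|x|}\,dx = \int_{S^2}\int_0^\infty \rho\,|f(\rho\sigma)|\,d\rho\,d\sigma.
$$
Along each ray the fundamental theorem of calculus gives $f(\rho\sigma) = -\int_\rho^\infty \sigma\cdot\nabla f(s\sigma)\,ds$, whence $|f(\rho\sigma)| \le \int_\rho^\infty |\nabla f(s\sigma)|\,ds$.

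Substituting this bound and exchanging the order of the $\rho$- and $s$-integrals by Tonelli's theorem over the region $\{0\le\rho\le s\}$ produces the weight $\int_0^s\rho\,d\rho = s^2/2$, which is exactly the Jacobian needed to reassemble a volume integral:
$$
\int_{S^2}\int_0^\infty \rho\Big(\int_\rho^\infty |\nabla f(s\sigma)|\,ds\Big)\,d\rho\,d\sigma
= \tfrac12\int_{S^2}\int_0^\infty |\nabla f(s\sigma)|\,s^2\,ds\,d\sigma
= \tfrac12\int_{\R^3}|\nabla f(x)|\,dx.
$$
This gives the claimed inequality for test functions, and the density step above upgrades it to all of $\dot W^{1,1}$. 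The argument is essentially elementary; the only point demanding a little care is the density/identification step, namely verifying that the Kato-space limit genuinely lies in the closed subspace $\Kato_0$.
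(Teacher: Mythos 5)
Your proposal is correct and follows essentially the same route as the paper: polar coordinates centered at $y$, the fundamental theorem of calculus along rays (the paper phrases this as an integration by parts against the weight $r^2/2$, using $|\partial_r|f||=|\partial_rf|$ a.e.), and a density argument to pass from test functions to all of $\dot W^{1,1}$. Your treatment of the limiting step and the identification of the limit in $\Kato_0$ is slightly more careful than the paper's one-line remark, but the substance is identical.
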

\begin{proof}
Suppose $f \in C^1_c(\R^3)$.
Fix any point $y$, and then write the integral for the Kato norm in polar 
coordinates $x = y + r\omega$.
$$\begin{aligned}
\int_{\R^3} \frac {|f(x)|}{|x-y|} \dd x &= \int_0^{\infty} \int_{S^2} 
|f(y+r\omega)| \dd \omega\; r \dd r = \\
&= -\int_0^{\infty} \int_{S^2} \partial_r |f(y+r\omega)| \dd \omega \frac {r^2} 
2 \dd r  \les \|f\|_{\dot W^{1, 1}}.
\end{aligned}$$
since $\big|\partial_r |f(y+r\omega)|\big| = |\partial_r f(y+r\omega)|$ almost 
everywhere.  Take the supremum over
$y \in \R^3$ to obtain the same bound for $\norm[f][\Kato]$, and limits to 
extend the result to all $f \in \dot{W}^{1,1}$.
\end{proof}

The next lemma shows that functions with two derivatives in $\Kato$ and 
some decay at infinity are in fact bounded.
\begin{lemma}\lb{kato_linfty} Assume that $D^2 f \in \Kato$ and $f$ can be 
approximated by smooth functions of compact support. Then $\|f\|_{L^{\infty}} 
\les \|D^2 f\|_{\Kato}$.
\end{lemma}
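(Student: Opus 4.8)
The plan is to reduce the estimate to the Newtonian potential representation in $\R^3$, for which the Kato norm is essentially the right-hand side by definition. First I would treat $f\in C^\infty_c(\R^3)$. Since $-\Delta\big(\tfrac{1}{4\pi|x|}\big)=\delta_0$ in $\R^3$, for such $f$ one has the exact identity
\[
f(y)=\frac{1}{4\pi}\int_{\R^3}\frac{(-\Delta f)(x)}{|x-y|}\,dx,
\]
with no boundary contribution at infinity because $f$ is compactly supported. Taking absolute values inside the integral,
\[
|f(y)|\le\frac{1}{4\pi}\int_{\R^3}\frac{|\Delta f(x)|}{|x-y|}\,dx\le\frac{1}{4\pi}\,\|\Delta f\|_{\Kato},
\]
and since $\Delta f$ is a contraction of the Hessian we have $\|\Delta f\|_{\Kato}\les\|D^2 f\|_{\Kato}$. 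Taking the supremum over $y\in\R^3$ gives $\|f\|_{L^\infty}\les\|D^2 f\|_{\Kato}$ for all $f\in C^\infty_c(\R^3)$.

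Second, I would pass to the general case by density. Let $f_n\in C^\infty_c(\R^3)$ be the approximating sequence, so that $D^2 f_n\to D^2 f$ in $\Kato$ and $f_n\to f$ in the (weaker) sense implicit in the hypothesis, e.g.\ in $L^1_{\rm loc}$ or in $\mathcal S'$. Applying the first step to $f_n-f_m$ shows that $(f_n)$ is Cauchy in $L^\infty(\R^3)$; let $\tilde f$ be its uniform limit. Then $\tilde f$ is continuous and bounded, with $\|\tilde f\|_{L^\infty}\le\tfrac{1}{4\pi}\|\Delta f\|_{\Kato}\les\|D^2 f\|_{\Kato}$. Because uniform convergence implies convergence in $L^1_{\rm loc}$ (hence in $\mathcal S'$), $\tilde f$ coincides with $f$, and the claimed bound follows.

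The only point demanding genuine care — and the closest thing to an obstacle — is this last identification: one must use that the approximation is taken in a topology fine enough that the uniform limit $\tilde f$ really equals $f$, with no residual affine or harmonic ambiguity. This is precisely where the hypothesis that $f$ has "some decay at infinity" (equivalently, is approximable by compactly supported functions) enters; without it the seminorm $\|D^2\cdot\|_{\Kato}$ does not control $\|\cdot\|_{L^\infty}$. Everything else is just the remark that the Kato norm is, by definition, the operator norm of convolution against the $\R^3$ Newtonian kernel into $L^\infty$.
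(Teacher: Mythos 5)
Your proof is correct, and it takes a slightly different route from the paper's. The paper integrates twice along rays from the point $x$: it writes $f(x)$ via the fundamental theorem of calculus in the radial variable, integrates by parts once more, applies Fubini to produce $\int_{S^2}\int_0^\infty s\,\partial_r^2 f(x+s\omega)\,ds\,d\omega$, and then recognizes this (after converting back to Cartesian coordinates) as an integral of $|D^2 f(y)|/|x-y|$, hence bounded by $\|D^2 f\|_\Kato$. You instead pass through the Newtonian potential identity $f = \frac{1}{4\pi}\,|x|^{-1}*(-\Delta f)$, bound by the Kato norm of $\Delta f$, and then use the pointwise domination $|\Delta f|\les|D^2 f|$. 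The two arguments are essentially equivalent in strength and length; the main cosmetic difference is that yours goes through the trace $\Delta f$ and therefore makes explicit the slightly stronger statement $\|f\|_\infty \le \frac{1}{4\pi}\|\Delta f\|_\Kato$, whereas the paper's intermediate quantity is the directional derivative $\partial_r^2 f$, which it then dominates by the full Hessian. Your remark about the approximation step — that the topology in which $f_n\to f$ must be fine enough (e.g.\ $L^1_{\rm loc}$ or $\mathcal S'$) to identify the uniform limit $\tilde f$ with $f$, with no harmonic ambiguity — is the right caution; the paper is similarly terse there ("by approximation we infer the result"), so you are not omitting anything the paper supplies.
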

 \begin{proof}
Note that for bounded functions $f$ of compact support, for every $x$
$$
\lim_{r \to \infty} \int_{S^2} f(x+r\omega) \dd \omega = 0.
$$
Then
$$\begin{aligned}
f(x) &= - \frac 1 {4\pi} \int_{S^2} \int_0^{\infty} \frac {\partial 
f}{\partial_r}(x+r\omega) \dd r \dd \omega \\
&= \frac 1 {4\pi} \int_{S^2} \int_0^{\infty} \int_r^{\infty} \frac {\partial^2 
f}{\partial_r^2}(x+s\omega) \dd s \dd r \dd \omega \\
&= \frac 1 {4\pi} \int_{S^2} \int_0^\infty s \frac {\partial^2 
f}{\partial_r^2}(x+s\omega) \dd s \dd \omega \\
&\les \int_{\R^3} \frac {|D^2 f(y)|}{|x-y|} \dd y \les \|D^2 f\|_{\Kato}.
\end{aligned}$$
By approximation we infer the result in the general case.
\end{proof}

Finally, we show the following inclusion between Kato-type spaces:
\begin{lemma}\lb{kato_sob} If $f$ is approximable by bounded compactly supported 
functions
$$
\sup_x \int_\R \frac {|f(y)| \dd y}{|x-y|^2}  \les \|\dl f\|_{\Kato}.
$$
\end{lemma}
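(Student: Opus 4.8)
The plan is to follow the polar-coordinate scheme already used in Lemmas~\ref{kato_w11} and~\ref{kato_linfty}. By the approximation hypothesis it suffices to prove the estimate for $f \in C^1_c(\R^3)$: if $\|\nabla f\|_\Kato = \infty$ there is nothing to show, and otherwise one picks bounded compactly supported $f_n$ with $\nabla f_n \to \nabla f$ in $\Kato$ (hence, after passing to a subsequence, $f_n \to f$ pointwise a.e.), establishes the bound for each $f_n$, and lets $n \to \infty$ using Fatou's lemma on the left-hand side. (Here and below the spatial integral is over $\R^3$.)

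So fix $f \in C^1_c(\R^3)$ and a point $x$, and write $y = x + r\omega$ with $r > 0$, $\omega \in S^2$. The Jacobian $r^2$ cancels the weight $|x-y|^{-2} = r^{-2}$, so
\[
\int_{\R^3} \frac{|f(y)|}{|x-y|^2}\,dy = \int_{S^2}\int_0^\infty |f(x+r\omega)|\,dr\,d\omega.
\]
Since $f$ has compact support, $f(x+r\omega) = -\int_r^\infty \partial_s\big(f(x+s\omega)\big)\,ds$, and $\big|\partial_s\big(f(x+s\omega)\big)\big| = |\omega\cdot\nabla f(x+s\omega)| \le |\nabla f(x+s\omega)|$. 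Substituting this bound and interchanging the $r$- and $s$-integrals (Tonelli, nonnegative integrand) converts $\int_0^\infty\!\int_r^\infty(\,\cdot\,)\,ds\,dr$ into $\int_0^\infty s\,(\,\cdot\,)\,ds$, giving
\[
\int_{\R^3} \frac{|f(y)|}{|x-y|^2}\,dy \le \int_{S^2}\int_0^\infty s\,|\nabla f(x+s\omega)|\,ds\,d\omega = \int_{\R^3} \frac{|\nabla f(y)|}{|x-y|}\,dy \le \|\nabla f\|_\Kato,
\]
where in the middle step we return to Cartesian coordinates using $ds\,d\omega = |x-y|^{-2}\,dy$ together with $s\,|\nabla f(x+s\omega)| = |x-y|\,|\nabla f(y)|$. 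Taking the supremum over $x$ finishes the proof.

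The estimate is essentially immediate once the cancellation of the weight against the Jacobian is noticed, so I do not anticipate a genuine obstacle. The only two points deserving care are the vanishing of the boundary term at $r = \infty$ in the fundamental theorem of calculus — which is exactly why the compact-support (equivalently, approximability) hypothesis is imposed — and the Tonelli interchange, both of which are routine for the nonnegative integrands appearing here.
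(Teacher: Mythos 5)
Your proof is correct and takes essentially the same route as the paper: both pass to polar coordinates centered at $x$ so the Jacobian $r^2$ cancels the weight $|x-y|^{-2}$, and both trade $|f|$ for $r|\nabla f|$ — you via the fundamental theorem of calculus followed by Tonelli, the paper via the equivalent integration by parts $\int_0^\infty |f(x+r\omega)|\,dr = -\int_0^\infty r\,\partial_r|f(x+r\omega)|\,dr$ with vanishing boundary terms. Your write-up is a bit more explicit about the approximation step and the Tonelli interchange, but the underlying idea is identical.
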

\begin{proof} Take $f$ bounded and compactly supported. Then for every $x$
$$
\lim_{r \to \infty} \int_{S^2} |f(x+r\omega)| r \dd \omega = 0.
$$
Consequently
$$
\int_\R \int_{S^2} |f(x+r\omega)| \dd \omega \dd r = - \int_\R \int_{S^2} 
\partial_r |f(x+r\omega)| r \dd \omega \dd r \les \|\dl f\|_{\Kato}
$$
because $|\partial_r |f(x+r\omega)|| = |\partial_r f(x+r\omega)|$ almost 
everywhere.

The conclusion then follows by approximation.
\end{proof}

A straightforward approximation argument shows that every potential $V \in 
\Kato_0$ satisfies the local Kato condition 
\begin{equation} \label{eq:localKato}
\lim_{\epsilon\to 0} \sup_{y\in\R^3} \int_{|x-y| < \epsilon} \frac{|V(x)|}{|x-
y|}\,dx
 = 0
\end{equation}
and the distal Kato condition
\begin{equation} \label{eq:distalKato}
\lim_{R \to \infty} \sup_{y\in\R^3} \int_{|x-y| > R} \frac{|V(x)|}{|x-
y|}\,dx
 = 0.
\end{equation}
These properties imply that
$H = {-\Delta} + V$ is essentially self-adjoint with spectrum bounded
below by $-M$ for some $M < \infty$~\cite{Si82}.
With the assumption that zero is neither an eigenvalue nor a resonance, there 
are at most finitely many negative eigenvalues $\lambda_j$ with corresponding 
orthogonal spectral projections $P_j$. We denote by $P_c := I - \sum_j P_j$ the 
projection on the absolutely continuous spectrum. 

For each $z \in \Compl \setminus \R^+$
define $R_0(z) :=({-\Delta}-z)^{-1}$.
The operators $R_0(z)$ are all bounded on $L^2(\R^3)$ and
act explicitly by convolution with the kernel
\begin{equation*}
R_0(z)(x,y) = \frac{e^{i\sqrt{z}|x|}}{4\pi|x|},
\end{equation*}
where $\sqrt{z}$ is taken to have positive imaginary part.  
On the boundary $\lambda \in \R^+$ the resolvents $R_0(\lambda \pm i0)$ are 
defined as  $\lim_{\eps\downarrow 0} 
R_0(\lambda\pm i\eps)$. These operators are not $L^2$-bounded but instead 
satisfy a uniform mapping estimate
from $L^{6/5, 2}$ to $L^{6, 2}$.

The perturbed resolvent $R_V(z) := (H-z)^{-1}$ does not have a closed-form 
formula for its integral kernel,
however it can be expressed in terms of $R_0(z)$ via the identity
\begin{equation} \label{eq:ResIdent}
R_V(z) = (I + R_0(z)V)^{-1}R_0(z) = R_0(z)(I + VR_0(z))^{-1}. 
\end{equation}
Boundary values $R_V^\pm(\lambda)$ along the positive real axis are once again 
defined
by continuation, that is $R_V^\pm(\lambda) = \lim_{\eps\downarrow0} R_V(\lambda 
\pm i\eps)$.
The two continuations do not coincide; the difference between them is (up to a 
constant factor) the
absolutely continuous spectral measure of $H$.

Returning to the formal solution~\eqref{eq:formalsolution} of the wave equation 
with a potential,
we wish to analyze the component of the solution orthogonal to eigenfunctions of 
$H$, that is
$$
P_c f(t) = \cos(t \sqrt H) P_c f_0 + \frac {\sin(t \sqrt H) P_c}{\sqrt H} g_0 + 
\int_0^t \frac {\sin((t-s)\sqrt H) P_c}{\sqrt H} F(s) \, ds,
$$
where $\cos(t \sqrt H)P_c$ and $\frac {\sin(t \sqrt H) P_c}{\sqrt H}$ are 
defined by means of operator calculus:
\begin{align*}
\cos(t \sqrt H) P_c f &:= \frac{1}{2\pi i}\int_0^\infty \cos(t \sqrt \lambda)
[R_V^+(\lambda)-R_V^-(\lambda)]f \,d\lambda, \\
\frac {\sin(t \sqrt H) P_c}{\sqrt H} f &:= \frac{1}{2\pi i}\int_0^\infty \frac 
{\sin(t \sqrt \lambda)}{\sqrt \lambda}
[R_V^+(\lambda)-R_V^-(\lambda)]f \,d\lambda.
\end{align*}
This definition makes $\cos(t \sqrt H)P_c$ and $\frac {\sin(t \sqrt H) 
P_c}{\sqrt H}$ $L^2_x$-bounded for every $t$.
So long as zero is a regular point of the spectrum, it is possible to effect a 
change of variables
$\lambda \mapsto \lambda^2$ leading to the identities
\begin{align} 
\cos(t\sqrt{H})P_c f &= \frac{1}{\pi i}\int_{-\infty}^\infty \lambda 
\cos(t\lambda) R_V^+(\lambda^2) f \,d\lambda \label{eq:Stonecosine}\\
\frac{\sin(t\sqrt{H})P_c}{\sqrt{H}}f &= \frac{1}{\pi i}\int_{-\infty}^\infty 
\sin(t\lambda) R_V^+(\lambda^2) f\,d\lambda. \label{eq:Stonesine}
\end{align}
To be precise, $R_V^+(\lambda^2)$ denotes 
$\lim_{\eps\downarrow0}R_V((\lambda+i\eps)^2)$, which coincides with
$R_V^\pm(\lambda^2)$ according to the sign of $\lambda$.  Presented this way, we 
see that the linear propagators
for the perturbed wave equation are closely tied to the Fourier transform (in 
$\lambda$) of the resolvent
$R_V^+(\lambda^2) = R_0^+(\lambda^2)(I + VR_0^+(\lambda^2))^{-1}$.
Indeed, the principal effort behind 
Theorem~\ref{main_theorem} is to establish
useful bounds on the Fourier transform of the perturbative factor $(I + 
VR_0^+(\lambda^2))^{-1}$.
We obtain these bounds as an application of the abstract Wiener inversion 
theorem in Section~\ref{sec:Wiener}.

There is no material change to these estimates if one considers the resolvent 
continuation from below
(i.e. $R_0^-(\lambda^2)$) or applies inverse Fourier transforms instead, as has 
been done in previous studies.
Keeping the notation in~\cite{becgol}, let 
\begin{equation} \label{eq:T_hat}
\hat{T}^\pm(\lambda,x,y) := VR_0^\pm(\lambda^2)(x,y)
 = V(x) \frac{e^{\pm i\lambda|x-y|}}{4\pi |x-y|}.
\end{equation}
Its inverse Fourier transform in the $\lambda$ variable has the explicit form
\begin{equation} \label{eq:T}
\begin{aligned}
T^\pm(\rho,x,y) &= (4\pi|x-y|)^{-1}V(x)\delta_{\mp|x-y|}(\rho) \\
&= (\mp 4\pi \rho)^{-1} V(x) \delta_{\mp|x-y|}(\rho),
\end{aligned}
\end{equation}
which is the restriction of $V(x)/(4\pi |x-y|)$ to the light cone
$|x-y| = \pm \rho$.

We will return to these operators in Section~\ref{sec:proof} with the goal of
understanding properties of $(I + \hat{T}^\pm(\lambda))^{-1}$.
In Section~\ref{sec:Wiener}, where the techniques for constructing an operator 
inverse
are developed, the family of operators $\hat{T}(\lambda)$ will be of a more 
general character.

\section{An Abstract Wiener Theorem} \label{sec:Wiener}

Let $X$ be a Banach lattice of complex-valued functions over a measure space 
$(\X,\mu)$.
The main lattice properties are that $f \mapsto |f|$ is an isometry in $X$, and 
if $f \in X$ and 
$|g| \leq f$, then $g \in X$ with lesser norm.  Relevant examples include 
$L^p(\mu)$, 
$1 \leq p \leq \infty$, $\Kato$ and $\Kato^*$ as defined in 
Section~\ref{sec:intro}, and their
interpolants. The space of finite complex-valued measures on $\R$, which we 
denote by $\M$,
is also a Banach lattice.

The compound space $X_x\M_\rho$ then consists of measures $\nu$ on $\R \times 
\X$ for which $M(x) = \norm[\nu(\,\cdot\, , x)][\M]$ is finite $\mu$-almost 
everywhere and
belongs to $X$.

\begin{definition}
Let $\U_X$ be the set of bounded operators from $X_x \M_{\rho}$ to itself 
defined formally by
the integral
\be\lb{u_x}
(T F)(\rho, x) := \int_{-\infty}^{\infty} \int_{\X} T(\rho-\sigma, x, y) 
F(\sigma, y) \,\mu(dy) \dd \sigma,
\ee
and possessing the property that
$|T(\rho-\sigma, x, y)|$ also produces a bounded integral operator on 
$X_x\M_\rho$.
In the general case, $T(\rho, x,y)$ may be a measure on $\R \times \X \times \X$ 
and its associated
linear operator is defined weakly by pairing with a second function in 
$X_x^*B(\R)_\rho$.

Setting the norm $\norm[T][\U_X] := \norm[|T|][\B(X_x\M_\rho)]$ gives $\U_X$
the structure of a Banach lattice.  In addition, $\U_X$ is a Banach algebra 
under
the natural composition of operators on $X_x\M_\rho$.
\end{definition}

Given an element $T \in \U_X$, let $M(T)$ be the marginal distribution of $|T|$ 
on $\X \times \X$,
which can be written formally as
\be\lb{mt}
M(T)(x,y) =  \int_{-\infty}^{\infty} |T(\rho, x, y)|  \dd \rho.
\ee
It follows from the construction that $\norm[T][\B(X_x\M_\rho)] \leq 
\norm[M(T)][\B(X)] = \norm[T][\U_X]$.  

Kernels $T \in \U_X$ also define bounded operators from $X$ to $X_x \M_{\rho}$ 
by
\be\lb{image}
(T f)(\rho, x) = \int_{-\infty}^{\infty} \int_{\X} T(\rho, x, y) f(y) \,\mu(dy).
\ee
For many of the statements in Theorem~\ref{main_theorem}, it will be significant 
that $T \in \U_X$
acts as a bounded operator on $X_x L^p_\rho$ as well for any $1 \leq p \leq 
\infty$.  This is a
consequence of~\eqref{u_x} and~\eqref{mt} and the fact that convolution with a 
finite measure
preserves $L^p(\R)$.  Finally, each element $T \in \U_X$ has an adjoint operator
$T^*(\rho,x, y) = \overline{T(-\rho, y, x)}$ which belongs to $\U_{X^*}$ because
$M(T^*) = M(T)^*$.

We take the Fourier transform of $T \in \U_X$, $\hat T(\lambda) \in \B(X)$, to 
be the operator with
kernel
\be\lb{fourier}
\hat T(\lambda, x, y) = \int_{-\infty}^{\infty} e^{-i\rho\lambda} T(\rho, x, y) 
\dd \rho,
\ee
if $T(\rho,x,y)$ is a function, 
or more generally the marginal distribution of $e^{-i\rho\lambda}T$.
It satisfies the standard product relation $(ST)^{\wedge}(\lambda) = \hat 
S(\lambda) \hat T(\lambda)$.  
For each $\lambda \in \R$, $\hat{T}(\lambda)$ is dominated pointwise by $M(T)$, 
hence
$\|\hat T(\lambda)\|_{\B(X)} \leq \|T\|_{\U_X}$.

There is an identity element $\1 \in \U_X$ whose integral kernel $\1(\rho,x,y)$ 
is the lifting of $\delta_x(y)$ onto the 
"centered" diagonal $\{\rho = 0\} \times \{x = y\} \subset \R\times\X\times\X$ 
via the natural identification with points
in $X$.  One can verify that $M(\1)$ is the identity operator on $X$, and that 
the Fourier transform of $\mb 1$ is $\hat{\mb 1}(\lambda) = I_X$ for each 
$\lambda \in \R$.

\begin{proposition} \label{thm:Wiener_2}
Suppose $T \in \U_X$ is such that
\begin{enumerate}
\item[C1] For some $N > 0$, $\lim\limits_{\delta \to 0} 
\norm[T^N(\rho,x,y) - T^N(\rho-\delta,x,y)][\U_X] = 0$.
\item[C2] $\lim\limits_{R \to \infty}
\norm[\chi_{|\rho| \ge R} T][\U_X] = 0$.
\end{enumerate}
If $I + \hat{T}(\lambda)$ is an invertible element of $\B(X)$ for every
$\lambda \in \R$, then ${\mb 1} + T$ is invertible in 
$\U_X$.
\end{proposition}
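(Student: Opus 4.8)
The plan is to prove this as an operator-valued version of the classical Wiener inversion theorem, following the strategy in the authors' earlier papers \cite{bec}, \cite{becgol}. The classical scalar statement says: if $\mu$ is a finite measure on $\R$ whose Fourier transform $\hat\mu(\lambda)$ never takes the value $-1$, then $\delta_0 + \mu$ is invertible in the measure algebra $\M$. Here we want the same conclusion in the noncommutative Banach algebra $\U_X$, under the regularity hypotheses C1 and C2 which substitute for $T$ being a genuine $L^1$ function (C1 is an $L^1$-continuity of translates after passing to a power $T^N$, which controls the ``singular'' part; C2 is the decay/tightness at spatial infinity in the $\rho$ variable).

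\textbf{Step 1: Localization in frequency.} Fix $\lambda_0 \in \R$. Since $I + \hat T(\lambda_0)$ is invertible in $\B(X)$ and $\lambda \mapsto \hat T(\lambda)$ is continuous in the operator norm on a neighborhood of $\lambda_0$ (this continuity follows from C1 together with $\|\hat T(\lambda) - \hat T(\mu)\|_{\B(X)} \les \|(e^{-i\rho\lambda} - e^{-i\rho\mu})T\|_{\U_X}$ and a splitting into $|\rho|$ large, controlled by C2, and $|\rho|$ bounded, where the exponential is uniformly continuous — with the power $T^N$ reserved for the place where genuine $\U_X$-continuity of translates is needed), the inverse $(I + \hat T(\lambda))^{-1}$ exists and is uniformly bounded for $\lambda$ in a small interval $J_{\lambda_0}$ around $\lambda_0$. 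The goal is to produce, for each such interval, an element of $\U_X$ that inverts $\1 + T$ ``microlocally'' near $\lambda_0$.

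\textbf{Step 2: The local parametrix.} On $J_{\lambda_0}$, write $(I + \hat T(\lambda))^{-1} = I - \hat T(\lambda)(I + \hat T(\lambda))^{-1}$ and seek to realize $\hat T(\lambda)(I + \hat T(\lambda_0))^{-1}$ or a Neumann-type series as the Fourier transform of an actual $\U_X$ kernel. Introduce a smooth cutoff $\vp$ in $\lambda$ supported in $J_{\lambda_0}$ with $\sum \vp$-type partition of unity; the operator $\vp(\lambda)(I + \hat T(\lambda))^{-1}$ should be shown to lie in $\hat{\U_X}$ by expanding $(I+\hat T(\lambda))^{-1}$ in a geometric series around $\lambda_0$: $(I + \hat T(\lambda))^{-1} = \sum_{k\ge0} \big[(\hat T(\lambda_0) - \hat T(\lambda))(I+\hat T(\lambda_0))^{-1}\big]^k (I + \hat T(\lambda_0))^{-1}$, which converges in $\B(X)$ on a small enough interval. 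Each factor $\hat T(\lambda_0) - \hat T(\lambda)$, cut off by $\vp(\lambda)$, is the Fourier transform of a $\U_X$ element (essentially a modulated-and-truncated piece of $T$), and $(I + \hat T(\lambda_0))^{-1}$ is a fixed bounded operator on $X$, hence a ``constant'' multiplier lying in $\U_X$. One checks the series converges in $\U_X$-norm, not merely in $\B(X)$ pointwise in $\lambda$, by controlling $\|\cdot\|_{\U_X}$ of each term geometrically; here the regularity hypotheses C1–C2 enter to guarantee the cutoff pieces of $T$ genuinely lie in $\U_X$ with summable norms (this is where $T^N$ and the smoothing from $\vp$ cooperate, via a Bernstein/Wiener-type lemma: a $\U_X$-kernel whose frequency content is smoothly cut off has $\U_X$-norm controlled by its $\B(X)$-bound and some derivatives).

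\textbf{Step 3: Patching.} Cover $\R$ (which is noncompact, so one also uses C2 to handle $|\lambda| \to \infty$, where $\hat T(\lambda) \to 0$ in $\B(X)$ and $I + \hat T(\lambda)$ is automatically invertible with inverse $\to I$) by finitely many intervals $J_1,\dots,J_m$ plus one neighborhood of infinity, with a subordinate partition of unity $\sum_j \vp_j \equiv 1$. On each $J_j$ we have from Step 2 a kernel $S_j \in \U_X$ with $\widehat{S_j}(\lambda) = \vp_j(\lambda)(I + \hat T(\lambda))^{-1}$. Set $S := \sum_j S_j \in \U_X$. Then $\widehat{(\1 + T)S}(\lambda) = (I + \hat T(\lambda))\sum_j \vp_j(\lambda)(I + \hat T(\lambda))^{-1} = \sum_j \vp_j(\lambda) I = I$ for every $\lambda$. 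By injectivity of the Fourier transform on $\U_X$ (which holds since $M(\cdot)$ and the translation structure let one recover $T$ from $\hat T$ — a kernel with vanishing Fourier transform is zero), $(\1 + T)S = \1$, and symmetrically $S(\1 + T) = \1$, so $\1 + T$ is invertible with inverse $S \in \U_X$.

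\textbf{The main obstacle} is Step 2: upgrading the pointwise-in-$\lambda$ invertibility and the $\B(X)$-convergence of the local Neumann series into genuine convergence in the algebra $\U_X$. In the scalar case this is the heart of Wiener's theorem and requires the division lemma (one can divide $\hat f$ by something nonvanishing and smooth and stay in $\hat{L^1}$); in the operator-lattice setting one must build the analogous division/localization lemma from C1 and C2, i.e.\ show that smoothly frequency-localized truncations of $T$ (and of its powers $T^N$, where the regularity actually lives) have controlled $\U_X$-norms, and that multiplying by the fixed bounded operator $(I + \hat T(\lambda_0))^{-1}$ — which is \emph{not} a priori an element of $\U_X$ in any obvious sense beyond being a constant-in-$\rho$ multiplier — keeps everything inside $\U_X$ with geometrically decaying norms. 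Once this localization lemma is in hand, Steps 1 and 3 are soft (compactness of the frequency cover, finiteness of the partition, injectivity of $\hat{\;\cdot\;}$), exactly as in \cite{becgol}.
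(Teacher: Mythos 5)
Your overall plan --- localize in frequency, expand a Neumann series around a frozen base point, and patch via a finite partition of unity, using injectivity of the Fourier transform to conclude --- is exactly the strategy the paper uses, and your local parametrix in Step~2 (expanding $(I+\hat T(\lambda))^{-1}$ around $A_0 = I + \hat T(\lambda_0)$, regarding $A_0^{-1}$ as a constant-in-$\rho$ multiplier, and controlling the $\U_X$ norm of each term of the series) is precisely the paper's local step. So the proposal is on the right track.

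However, you have essentially interchanged the roles of C1 and C2, and this matters because the two hypotheses drive different parts of the argument by different mechanisms. In the paper, condition C2 (tightness of $T$ in $\rho$, i.e.\ $\|\chi_{|\rho|\ge R}T\|_{\U_X}\to 0$) is what makes the \emph{local} step work: after cutting off to a small interval in $\lambda$, the relevant kernel $S_\eps$ has $M(S_\eps) \les \eps R\, M(T) + M(\chi_{|\rho|>R}T)$, which can be made arbitrarily small only because of C2. Condition C1, by contrast, controls the \emph{large-$\lambda$} regime: the high-frequency cutoff $(1-\eta(\lambda/L))\hat T(\lambda)$ is the Fourier transform of $S_L = T - L\check\eta(L\cdot)*T$, and C1 (translation continuity of $T^N$ in $\U_X$) is exactly what drives $\|S_L\|_{\U_X}\to 0$ as $L\to\infty$ (with the identity $(1-\eta(\lambda/2L))(I+\hat T)^{-1} = (1-\eta(\lambda/2L))(I-(-\hat T)^N)^{-1}\sum_{k<N}(-1)^k\hat T^k$ handling $N>1$). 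Your Step~3 instead says ``one also uses C2 to handle $|\lambda|\to\infty$, where $\hat T(\lambda)\to 0$ in $\B(X)$,'' which is inaccurate on two counts: the decay of the Fourier transform at infinity follows from C1, not C2 (and only for $\hat T^N$, not $\hat T$ itself); and, more importantly, pointwise decay of $\hat T^N(\lambda)$ at infinity is not enough --- you must still produce a \emph{single} element of $\U_X$ whose Fourier transform agrees with $(I+\hat T(\lambda))^{-1}$ over an entire unbounded neighborhood of $\infty$, and this requires the Neumann series of the high-pass kernel $S_L$ in the $\U_X$ norm (not a pointwise-in-$\lambda$ argument). That construction is missing from your sketch and is one of the two genuine pieces of work in the proof, so as written there is a gap in your treatment of the noncompact end. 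Symmetrically, your suggestion that the local step uses $T^N$ via a ``Bernstein/Wiener-type lemma'' overlooks that the local smallness of $S_\eps$ comes only from C2, with no need for the $T^N$ regularity. Once you swap the hypotheses into their correct roles and supply the honest Neumann construction near $\lambda = \infty$, your argument matches the paper's.
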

The argument used in \cite{becgol}, used for a slightly different algebra based 
on $L^1_\rho X_x$ instead of 
$X_x\M_\rho$, can be repeated without substantial modification. We reproduce it 
below for the reader's convenience.

\begin{proof}
It suffices to show that $(I + \hat{T}(\lambda))^{-1}$ is the Fourier transform
of an element  $S \in \U_X$. 
Let $\eta: \R \to \R$ be a standard
cutoff function.  For any $L \in \R$, the large-$\lambda$ restriction
$(1 - \eta(\lambda/L))\hat{T}(\lambda)$ is the Fourier transform of
\begin{equation*}
S_L(\rho) = \big(T - L\check{\eta}(L\,\cdot\,) * T\big)(\rho) = 
\int_\R L \check{\eta}(L\sigma) [T(\rho) - T(\rho-\sigma)]\,d\sigma.
\end{equation*}
Dependence on $(x,y)$ in the kernels $T(\rho,x,y)$ is suppressed in the 
expression above.
If condition~C1 is satisfied with $N = 1$, then 
the $\U_X$ norm of the right-hand side vanishes as $L \to \infty$.
Thus there is a fixed number $L$ so that $\sum_{k=0}^\infty (-S_L)^k$ is a 
convergent
series in $\U_X$.  Its Fourier transform is 
\begin{equation*}
\sum_{k=0}^\infty (-1)^k \Big(\big(1 - \eta(\lambda/L))\hat{T}(\lambda)\Big)^k
= \Big(I + \big(1-\eta(\lambda/L)\big)\hat{T}(\lambda)\Big)^{-1}
\end{equation*}
which agrees with $(I + \hat{T}(\lambda))^{-1}$ for all $\lambda > 2L$.  Apply 
the decomposition
\begin{align*}
(I + \hat{T}(\lambda))^{-1} &= \eta(\lambda/2L) (I + \hat{T}(\lambda))^{-1} + 
\big(1-\eta(\lambda/2L)\big) (I + \hat{T}(\lambda))^{-1} \\
&= \eta(\lambda/2L) (I + \hat{T}(\lambda))^{-1} + (1 - \eta(\lambda/2L)) 
\big(I + (1-\eta(\lambda/L))\hat{T}(\lambda)\big)^{-1}
\end{align*}
If instead $T^N$ satisfies C1 then one observes that
\begin{equation*}
(1 - \eta(\lambda/2L))\big(I + \hat{T}(\lambda)\big)^{-1}
  = (1 - \eta(\lambda/2L))\big(I - (-\hat{T}(\lambda))^N \big)^{-1}
\sum_{k=0}^{N-1}(-1)^k \hat{T}^k(\lambda).
\end{equation*}

We then construct a local inverse for $I + \hat{T}(\lambda)$ in the neighborhood 
of
any $\lambda_0 \in \R$.  For simplicity, consider the representative case 
$\lambda_0 = 0$, and let $A_0 = I + \hat{T}(0)$, which is bounded pointwise by
$I + M(T)$ as an integral operator in $\B(X)$.    
One can write $\eta(\lambda/\epsilon)(I + \hat{T}(\lambda) - A_0)$ 
as the Fourier transform of
\begin{align*}
S_\epsilon(\rho) &= \epsilon\check{\eta}(\epsilon\,\cdot\,) * T(\rho) - 
\epsilon\check{\eta}(\epsilon\rho)(I-A_0)
\\
&= \int_\R \epsilon\big(\check{\eta}(\epsilon(\rho-\sigma))- 
\check{\eta}(\epsilon\rho)\big)
 T(\sigma)\,d\sigma.
\end{align*}
Here we are again suppressing the $(x,y)$ dependence to highlight the 
convolution in the $\rho$ variable.
The second equality uses the fact that $I - A_0(x,y) = -\hat{T}(0, x, y) = - 
\int_\R T(\rho,x,y)\,d\rho$.

By the mean value theorem, $\int_\R \epsilon |\check{\eta}(\epsilon(\rho-
\sigma))- \check{\eta}(\epsilon\rho)| \dd \rho \les \min(\epsilon|\sigma|, 1)$.  
As a consequence, for any $R > 0$
\begin{equation*}
M(S_\eps) \les \eps R M(T) + M(\chi_{|\rho| > R}T),
\end{equation*}
which immediately implies that $\norm[S_\eps][\U_X] \les \eps R \norm[T][\U_X] + 
\norm[\chi_{|\rho| > R}T][\U_X]$.
By Assumption C2 there is a choice of $R$ and $\eps$ to make this norm 
arbitrarily small.

For any smooth function $\phi$ supported in $[-
\frac{\epsilon}2,\frac{\epsilon}2]$, there exists a series
\begin{align*}
\phi(\lambda)(I + \hat{T}(\lambda))^{-1} 
&= \phi(\lambda)\big(A_0 + \eta(\lambda/\epsilon)(1 + \hat{T}(\lambda) - 
A_0)\big)^{-1}
\\
&= \phi(\lambda) A_0^{-1}\big(I + \hat{S_\epsilon}(\lambda)A_0^{-1}\big)^{-1}
 = \phi(\lambda) A_0^{-1} \sum_{k=0}^\infty (-1)^k 
      \big(\hat{S_\epsilon}(\lambda)A_0^{-1}\big)^k.
\end{align*}
When $\epsilon$ is chosen sufficiently small, the inverse Fourier transform of 
this series converges in $\U_X$.

For $\lambda$ in the compact interval $[-2L, 2L]$, there is a nonzero
lower bound on the length $\eps$ required for convergence of the power series.

Choose a finite covering of the compact set $[-2L, 2L]$ and a subordinated 
partition of the unity $(\phi_j)_j$ with
$\sum_j \phi_j = \eta(\lambda/2L)$, such that for each $j$ the local inverse 
$\phi_j\big((\lambda-\lambda_j)/\epsilon\big) \big((I + \widehat 
T(\lambda)\big)^{-1} \in \U_X$ is given
by an explicit series as above.  Thus $\eta(\lambda/2L) \big((I + \widehat 
T(\lambda)\big)^{-1}$ is the
sum of finitely many elements of $\U_X$.

\end{proof}

\begin{remark}
Condition C2 implies that $\hat{T}(\lambda)$ is a norm-continuous function
from $\R$ to $\B(X)$.  Condition C1 implies that $\lim_{|\lambda|\to \infty}
\norm[\hat{T}^N(\lambda)][\B(X)]= 0$.
\end{remark}

\section{Proof of the main result} \label{sec:proof}

Recall that $\hat{T}^\pm(\lambda) = VR_0^\pm(\lambda^2)$ as defined
in~\eqref{eq:T_hat}, hence 
$T^\pm(\rho,x,y)$ is represented by
the distribution kernel $V(x)/(4\pi|x-y|)$ supported on the surface $\{|x-y| 
\pm \rho = 0\}$.
The proof of Theorem \ref{main_theorem} is based on applying 
Proposition~\ref{thm:Wiener_2} to $T^\pm$ in the space $\U_{L^1(\R^3)}$.  
With the help of duality and algebraic relations,
this result will extend to $\U_\Kato$ and the family of interpolation spaces 
spanning them.

The pointwise invertibility of $I+\hat T^\pm(\lambda)$ in $\B(L^1)$ at each 
$\lambda$ follows by Fredholm's alternative from the absence of resonances 
or eigenvalues, once we show that $\hat T^\pm(\lambda)$ is a compact operator in 
$\B(L^1)$.

\begin{lemma}\lb{lemma_compact} $\hat T^\pm(\lambda)$ is a compact operator in 
$\B(L^1)$ for all $\lambda \in \R$.
\end{lemma}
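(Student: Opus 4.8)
The plan is to prove compactness of $\hat{T}^\pm(\lambda)$ on $L^1(\R^3)$ by the standard route: first reduce to the case of a nice potential by approximation, then verify that the operator associated to the nice potential is compact via an Arzel\`a--Ascoli / collective-compactness argument. Since $V \in \Kato_0$, by definition there is a sequence $V_n$ of bounded, compactly supported functions with $\norm[V - V_n][\Kato] \to 0$. The difference $\widehat{(T-T_n)}^\pm(\lambda)$ has kernel $(V(x)-V_n(x))\frac{e^{\pm i\lambda|x-y|}}{4\pi|x-y|}$, so its $\B(L^1)$ operator norm is bounded by $\sup_y \int \frac{|V(x)-V_n(x)|}{4\pi|x-y|}\,dx = \frac{1}{4\pi}\norm[V-V_n][\Kato] \to 0$. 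Hence $\hat{T}^\pm(\lambda)$ is a norm-limit of the operators $\hat{T}_n^\pm(\lambda)$, and it suffices to show each of the latter is compact in $\B(L^1)$.

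So fix $V_n$ bounded with compact support, say $\norm[V_n][\infty] \le C$ and $\mathrm{supp}\, V_n \subset B(0,\rho)$. The operator $\hat{T}_n^\pm(\lambda)$ sends $f \mapsto V_n(x)\int_{\R^3}\frac{e^{\pm i\lambda|x-y|}}{4\pi|x-y|}f(y)\,dy$; the integral is $(R_0^\pm(\lambda^2)f)(x)$, and multiplication by $V_n$ both truncates to $B(0,\rho)$ and multiplies by a bounded function. I would show compactness by checking that $\hat{T}_n^\pm(\lambda)$ maps the unit ball of $L^1$ into a set that is precompact in $L^1$, using the $L^1$ compactness criterion (equi-integrability plus equi-tightness plus equicontinuity of translates). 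Tightness is immediate from the support condition. For equicontinuity of translates and equi-integrability, the key estimates are that the kernel $\frac{1}{4\pi|x-y|}$, restricted to $x \in B(0,\rho)$, has $\sup_y \int_{B(0,\rho)} \frac{dx}{|x-y|} < \infty$ (so the operator is bounded $L^1 \to L^1$) and moreover $\sup_y \int_{B(0,\rho)} \frac{dx}{|x-y|^{1-\eps}} < \infty$ for small $\eps>0$, which upgrades the kernel to one whose $x$-translates converge in the relevant averaged sense; combined with the boundedness and Lipschitz-on-compact-sets behavior of $x \mapsto V_n(x)e^{\pm i\lambda|x-y|}$ away from $y$, this yields the required equicontinuity. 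Alternatively, and perhaps more cleanly, I would factor $\hat{T}_n^\pm(\lambda) = M_{V_n}\chi_{B(0,\rho)} R_0^\pm(\lambda^2)$ and use that $R_0^\pm(\lambda^2)$ maps $L^1(B(0,\rho))$ into $W^{1,1}_{loc}$ or into a H\"older space on $B(0,\rho)$ — since convolution with $\frac{e^{\pm i\lambda|z|}}{4\pi|z|}$ gains almost one derivative in three dimensions — after which multiplication by the bounded compactly supported $V_n$ and the compact Sobolev embedding $W^{\sigma,1}(B(0,\rho)) \hookrightarrow L^1(B(0,\rho))$ for $\sigma>0$ closes the argument.

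The main obstacle is the mild singularity of the free resolvent kernel at $x=y$: one must be careful that the smoothing gained by $R_0^\pm(\lambda^2)$ genuinely survives in the $L^1$ setting (rather than only $L^2$ or $L^p$ for $p>1$, where fractional integration is cleaner), and that the oscillatory factor $e^{\pm i\lambda|x-y|}$ does not spoil the local regularity — it does not, since on the compact set $\overline{B(0,\rho)}$ it is a bounded Lipschitz-in-$x$ perturbation away from the diagonal and the diagonal singularity is integrable with room to spare. Once the smoothing-plus-truncation structure is isolated, compactness follows from a standard compact-embedding fact; I expect roughly a paragraph of routine verification there. It is also worth noting that the argument is uniform in $\lambda$ on compact $\lambda$-intervals, though only the stated pointwise conclusion is needed here.
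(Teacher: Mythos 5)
Your second route is essentially the paper's argument and it is correct. The paper also approximates $V$ by compactly supported potentials (taking them \emph{smooth}, not merely bounded), then exploits two facts: $\hat T^\pm(\lambda)f = VR_0^\pm(\lambda^2)f$ is supported in $\mathrm{supp}\,V$, and it has extra Sobolev regularity. Concretely, the paper expands $(1-\Delta)VR_0^\pm(\lambda^2)f$ via the Leibniz rule and shows each resulting term lies in $L^1$, so $\hat T^\pm(\lambda)$ maps $L^1$ into $(1-\Delta)^{-1}L^1$ with fixed compact support, whence compactness by the compact embedding of this space into $L^1$. Your factorization $\hat T_n^\pm(\lambda)=M_{V_n}\chi_{B(0,\rho)}R_0^\pm(\lambda^2)$ together with the bound $\chi_{B(0,\rho)}R_0^\pm(\lambda^2):L^1(\R^3)\to W^{1,1}(B(0,\rho))$ (both $|x-y|^{-1}$ and $|\nabla_x K|\lesssim |\lambda||x-y|^{-1}+|x-y|^{-2}$ integrate in $x$ over $B(0,\rho)$ uniformly in $y$), followed by Rellich--Kondrachov and multiplication by $V_n\in L^\infty$, accomplishes the same thing while requiring only $V_n$ bounded and compactly supported; if anything this is a touch more direct, since it avoids differentiating $V_n$. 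Three minor corrections: the input space in that last step should be $L^1(\R^3)$, not $L^1(B(0,\rho))$; the gain is in fact exactly one derivative in the $L^1$ scale, not ``almost one''; and your first (Fr\'echet--Kolmogorov) route is shakier as written, because the claim that $x\mapsto V_n(x)e^{\pm i\lambda|x-y|}$ is ``Lipschitz on compact sets'' fails for merely bounded $V_n$ --- one would instead have to handle the term $(\tau_h V_n - V_n)\,R_0^\pm f$ by a H\"older pairing such as $L^3_x\times L^{3/2}_x$ together with the local Sobolev embedding $W^{1,1}\hookrightarrow L^{3/2}$. That is why the factorization route is the one to keep.
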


\begin{proof}
By an approximation argument (since $V \in \Kato_0$), it suffices to treat the 
case when $V$ is smooth and compactly supported.  Then all functions 
$\hat{T}^\pm(\lambda)f = VR_0^\pm(\lambda^2)f$ are supported within ${\rm 
supp}\,V$.
In addition,
\begin{equation*}
(1 - \Delta)VR_0^\pm(\lambda^2)f = Vf + (1+\lambda^2)VR_0^\pm(\lambda^2)f -
2\nabla V \cdot \nabla R_0^\pm(\lambda^2)f - (\Delta V) R_0^\pm(\lambda^2)f.
\end{equation*}
Under the assumption that $V \in C^\infty_c(\R^3)$ each of the above terms belongs to 
$L^1(\R^3)$ with a
norm bound  of $C(1+\lambda^2)\norm[f][1]$.

For $\lambda$ fixed, $\hat{T}^\pm(\lambda)$ maps $L^1$ to $(1-\Delta)^{-1}L^1$ 
with fixed support inside
${\rm supp}\,V$.  Hence it is a compact operator on $L^1$ itself.
\end{proof}

Now the only obstacle to invertibility of $I + \hat{T}^\pm(\lambda)$ is the 
presence
of a nonzero solution to $\phi + VR_0^\pm(\lambda)\phi = 0$, $\phi \in L^1$.
At the same time, $R_0^\pm(\lambda)\phi$ would be a distributional solution to
$(-\Delta + V - \lambda)f = 0$.  We show below that such a scenario is not 
possible
under the spectral conditions of Theorem~\ref{main_theorem}.

\begin{lemma}\lb{comp_lemma} Assume that $V \in \Kato_0$, and suppose that
for some $\lambda \in [0,\infty)$ there is a nonzero $\phi \in L^1$
satisfying $\phi +V R_0^{\pm}(\lambda) \phi = 0$. 
Then in fact $\phi \in L^1 \cap \Kato$, and $R_0^\pm(\lambda)\phi \in 
\Kato^*\cap L^\infty$
belongs to $\la x\ra^\sigma L^2$ for each $\sigma > \frac12$.  

When $\lambda > 0$ the stronger conclusion $R_0^\pm(\lambda)\phi \in L^2$ is 
also valid.

Consequently, under the conditions of Theorem~\ref{main_theorem} 
the operators $I + R_0^{\pm}(\lambda) V$ and $I 
+ V R_0^{\pm}(\lambda)$ are invertible for all $\lambda \in [0, \infty)$ and
$$
(I + R_0^{\pm}(\lambda) V)^{-1} \in \B(L^{\infty}) \cap \B(\Kato^*),\ (I + V 
R_0^{\pm}(\lambda))^{-1} \in \B(L^1) \cap \B(\Kato).
$$
If $V \in L^{3/2,1}$, then $\Kato$ and $\Kato^*$ may be replaced respectively by
$L^{3/2,1}$ and $L^{3,\infty}$ in the above conclusion.
\end{lemma}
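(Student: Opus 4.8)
The plan is to rule out a nonzero obstruction $\phi$ by bootstrapping the regularity and decay of the pair $\phi$, $\psi:=R_0^\pm(\lambda)\phi$ until $\psi$ becomes a genuine (generalized) eigenfunction of $H$, which the spectral hypotheses of Theorem~\ref{main_theorem} forbid; the invertibility assertions then follow from Fredholm theory via Lemma~\ref{lemma_compact} and duality. Fix $\lambda\in[0,\infty)$ and put $\psi:=R_0^\pm(\lambda)\phi$. Since $e^{\pm i\sqrt\lambda|x|}/(4\pi|x|)$ is a fundamental solution of $-\Delta-\lambda$, we have $(-\Delta-\lambda)\psi=\phi=-V\psi$ distributionally, i.e. $H\psi=\lambda\psi$; and since $V,\lambda$ are real and $\ov{R_0^\pm(\lambda)}=R_0^\mp(\lambda)$ (the two agreeing at $\lambda=0$), it suffices to treat one sign.

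\emph{Regularity bootstrap.} As $|R_0^\pm(\lambda)(x,y)|=(4\pi|x-y|)^{-1}$ and $|\,\cdot\,|^{-1}\in L^{3,\infty}(\R^3)$, the weak Young inequality gives $\psi\in L^{3,\infty}$, hence $\psi\in L^2_{\mathrm{loc}}$. Writing $\psi=-R_0^\pm(\lambda)(V\psi)$ and splitting $V=V_b+V_s$ with $V_b$ bounded and compactly supported and $\|V_s\|_\Kato$ as small as desired (possible since $V\in\Kato_0$), the estimate $\sup_x\int|V_s(y)|(4\pi|x-y|)^{-1}\dd y\le(4\pi)^{-1}\|V_s\|_\Kato$ makes $I+R_0^\pm(\lambda)(V_s\,\cdot\,)$ invertible on $L^\infty$, while $R_0^\pm(\lambda)(V_b\psi)\in L^\infty$ by Hölder's inequality ($V_b\psi$ has compact support and lies in $L^q$ for some $q\in(\tfrac32,3)$, against which $|x-\cdot|^{-1}\in L^{q'}_{\mathrm{loc}}$). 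This yields $\psi\in L^\infty$, whence $\phi=-V\psi$ satisfies $\|\phi\|_\Kato\le\|\psi\|_\infty\|V\|_\Kato<\infty$ — and $\phi\in L^{3/2,1}$ if instead $V\in L^{3/2,1}$. Feeding $\phi\in L^1\cap\Kato$ back through the kernel, $|\psi(x)|\le(4\pi)^{-1}\|\phi\|_\Kato$ reconfirms $\psi\in L^\infty$, the representation $\psi(x)=\int R_0^\pm(\lambda)(x,y)\phi(y)\dd y$ with $\sup_x|x-y|\,|R_0^\pm(\lambda)(x,y)|=(4\pi)^{-1}$ places $\psi$ in $(\Kato^1)^*=\Kato^*$, and $\psi\in L^{3,\infty}$ is subsumed by this via~\eqref{young*}.

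\emph{Decay, and the positive-energy upgrade.} From $\phi=-V\psi\in\Kato_0$ and $|\psi(x)|\le(4\pi)^{-1}\int|x-y|^{-1}|\phi(y)|\dd y$, split the integral at $|y|=\tfrac12|x|$: the region $|y|<\tfrac12|x|$ contributes $O(\|\phi\|_1/|x|)$ and the region $|y|>\tfrac12|x|$ at most $(4\pi)^{-1}\|\phi\chi_{\{|y|>|x|/2\}}\|_\Kato$, which tends to $0$ as $|x|\to\infty$ because $\phi\in\Kato_0$; this shows $\psi(x)\to0$, and then inserting $\|\phi\chi_{\{|y|>S\}}\|_\Kato\le(\sup_{|y|>S}|\psi|)\,\|V\chi_{\{|y|>S\}}\|_\Kato$ and iterating — the distal Kato tail of $V$ being arbitrarily small — upgrades this to $|\psi(x)|\les\la x\ra^{-1}$, so that $\la x\ra^{-\sigma}\psi\in L^2$ for every $\sigma>\tfrac12$. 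For $\lambda>0$ one must still reach $\psi\in L^2$; here I would run the classical far-field argument. Since $\phi\in L^1$, $\psi=R_0^\pm(\lambda)\phi$ has the asymptotics $\psi(x)=\dfrac{e^{\pm i\sqrt\lambda|x|}}{4\pi|x|}\,\widehat\phi(\pm\sqrt\lambda\,x/|x|)+o(|x|^{-1})$; inserting this into the identity $\int_{\partial B_\rho}\big(\ov\psi\,\partial_r\psi-\psi\,\partial_r\ov\psi\big)\dd S=0$ (valid for every $\rho$ since $\ov\psi(-\Delta-\lambda)\psi-\psi(-\Delta-\lambda)\ov\psi=\ov\psi\phi-\psi\ov\phi=-V|\psi|^2+V|\psi|^2=0$) and letting $\rho\to\infty$ forces $\int_{S^2}|\widehat\phi(\pm\sqrt\lambda\,\omega)|^2\dd\omega=0$. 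The far-field pattern thus vanishes, and $\psi\in L^2$ follows by the standard positive-energy iteration for short-range potentials. I expect this step, the absence of ``positive-energy obstructions'' — which genuinely uses the oscillation of the resolvent kernel, not merely its size — to be the main difficulty.

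\emph{Conclusion.} If $\phi\ne0$ then $\psi\ne0$ (else $\phi=-V\psi=0$). For $\lambda>0$, $\psi$ is then an $L^2$ eigenfunction of $H$ with eigenvalue $\lambda\in(0,\infty)$, contradicting the hypothesis of no eigenvalues on $[0,\infty)$; for $\lambda=0$, $\psi\in\Kato^*\cap L^\infty$ with $\la x\ra^{-\sigma}\psi\in L^2$ for all $\sigma>\tfrac12$ is a nonzero solution of $H\psi=0$, i.e. a zero eigenvalue or resonance, contradicting that zero is a regular point. Hence $\phi=0$, so $I+VR_0^\pm(\lambda)$ is injective on $L^1$; being $I$ plus a compact operator (Lemma~\ref{lemma_compact}) it is invertible in $\B(L^1)$, and taking adjoints, $(I+VR_0^\pm(\lambda))^*=I+R_0^\mp(\lambda)V$, gives invertibility of $I+R_0^\pm(\lambda)V$ in $\B(L^\infty)$. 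The $\B(\Kato)$ and $\B(\Kato^*)$ statements come from the same Fredholm argument carried out on $\Kato$ — boundedness and compactness of $VR_0^\pm(\lambda)$ on $\Kato$ being obtained as in Lemma~\ref{lemma_compact}, and any $\Kato$-obstruction lying in $L^1$ by the bootstrap above, so injectivity persists — together with duality; when $V\in L^{3/2,1}$ one replaces $\Kato,\Kato^*$ by $L^{3/2,1},L^{3,\infty}$, using the $L^{3/2,1}\to L^{3/2,1}$ mapping property of $VR_0^\pm(\lambda)$ in place of the $\Kato$ one.
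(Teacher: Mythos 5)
Your bootstrap and your treatment of $\lambda>0$ follow essentially the paper's route. The paper also exploits a decomposition of $V$ into a piece of small Kato norm plus a bounded compactly supported piece to invert a Neumann series (it does so at the level of $\phi$ in $L^1\cap\Kato$ rather than of $\psi=R_0^\pm(\lambda)\phi$ in $L^\infty$, but these are dual formulations of the same idea), and it also deduces that $\widehat\phi$ vanishes on the sphere $|\xi|=\sqrt\lambda$ --- via the identity $\Im[\la R_0^\pm(\lambda)\phi,\phi\ra]=-\Im[\la R_0^\pm(\lambda)\phi,VR_0^\pm(\lambda)\phi\ra]=0$, which is cleaner than your far-field flux computation since it requires no pointwise asymptotic expansion of $\psi$ for a merely integrable $\phi$ --- before invoking Corollary~4.2 of~\cite{GoSc04b} for the implication that a vanishing spherical restriction forces $R_0^\pm(\lambda)\phi\in L^2$. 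That implication is precisely the step you flag as the main difficulty and leave as ``the standard positive-energy iteration''; the paper treats it as a citation, so this is acceptable, but it is a genuine theorem rather than a routine iteration, and you should cite it rather than gesture at it.

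The one genuine gap is in your final paragraph, in the passage from $\B(L^1)\cap\B(L^\infty)$ to $\B(\Kato)\cap\B(\Kato^*)$. You propose to rerun the Fredholm alternative on $\Kato$ and claim injectivity there because ``any $\Kato$-obstruction lies in $L^1$ by the bootstrap above.'' That reduction fails: if $\phi\in\Kato$ solves $\phi+VR_0^\pm(\lambda)\phi=0$, the bootstrap gives $R_0^\pm(\lambda)\phi\in L^\infty$ and hence $|\phi|\le\|R_0^\pm(\lambda)\phi\|_{L^\infty}|V|$, but $\Kato_0\not\subset L^1$ (tails comparable to $|x|^{-2-\eps}$ have finite Kato norm and infinite mass), so this does not place $\phi$ in $L^1$, and injectivity on $\Kato$ is not established; the same objection applies to $L^{3/2,1}$, which is also not contained in $L^1$. (Compactness of $VR_0^\pm(\lambda)$ on $\Kato$ is likewise asserted without proof, and the argument of Lemma~\ref{lemma_compact} does not transfer verbatim.) The paper sidesteps both issues with the resolvent identity $(I+VR_0^\pm(\lambda))^{-1}=I-V\,(I+R_0^\pm(\lambda)V)^{-1}R_0^\pm(\lambda)$: the right-hand side is manifestly bounded on $\Kato$ because $R_0^\pm(\lambda)$ maps $\Kato$ to $L^\infty$ (with norm $\|\cdot\|_\Kato/4\pi$), the middle factor is the inverse already constructed in $\B(L^\infty)$, and multiplication by $V$ maps $L^\infty$ back to $\Kato$; the dual identity handles $\Kato^*$, and the identical composition works for $L^{3/2,1}$ and $L^{3,\infty}$ when $V\in L^{3/2,1}$. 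Replace your Fredholm-on-$\Kato$ step with this identity and the proof closes.
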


\begin{proof}
Choose an approximation $V_\eps \in C^b_c(\R^3)$ so that $\norm[V-V_\eps][\Kato] 
< 4\pi$.
Then $V_\eps R_0^\pm(\lambda)\phi$ belongs to both $L^1$ and $L^{3,\infty}$, 
which also
includes $\Kato$.  Thanks to the identity
\begin{equation*}
\phi + (V-V_\eps)R_0^\pm(\lambda)\phi = -V_\eps R_0^\pm(\lambda)\phi
\end{equation*}
and the smallness of $V-V_\eps$, we can rewrite
\begin{equation*}
\phi = -(I + (V-V_\eps)R_0^\pm(\lambda))^{-1}V_\eps R_0^\pm(\lambda)\phi
\end{equation*}
as an element of $L^1 \cap \Kato$.  Dominating the resolvent kernel by $(4\pi|x-
y|)^{-1}$
leads to the conclusion $R_0^\pm(\lambda)\phi \in \Kato^* \cap L^\infty \cap
\la x\ra^\sigma L^2$ for each $\sigma > \frac12$.

When $\lambda > 0$, one can use the fact that
\begin{equation*}
\Im[\la R_0^\pm(\lambda)\phi, \phi\ra] = -\Im[\la R_0^\pm(\lambda)\phi, 
VR_0^\pm(\lambda)\phi\ra] = 0
\end{equation*}
to conclude that the Fourier transform of $\phi$ vanishes on the sphere of 
radius $\sqrt{\lambda}$
in frequency space.  Then Corollary~4.2 of~\cite{GoSc04b} asserts that 
$R_0^\pm(\lambda)\phi \in L^2$.

The spectral assumptions in Theorem~\ref{main_theorem} rule out solutions of 
this kind.
Direct appliction of the Fredholm alternative then shows that $(I + 
VR_0^\pm(\lambda))^{-1}$
exists in $\B(L^1)$, and by duality $(I + R_0^\pm(\lambda)V)^{-1} \in 
\B(L^\infty)$.
The identity
\begin{equation*}
(I + VR_0^\pm(\lambda))^{-1} = I - V(I + VR_0^\pm(\lambda))^{-1}R_0^\pm(\lambda)
\end{equation*}
defines an inverse for $I + VR_0^\pm(\lambda)$ in $\B(\Kato)$ (also in 
$\B(L^{3/2,1})$ if 
$V \in L^{3/2,1}$) and the dual statement defines
an inverse for $I + R_0^\pm(\lambda)V$ in $\B(\Kato^*)$.
\end{proof}

The case $\lambda = 0$ generates several direct equivalences between
$-\Delta$ and $H$.

\begin{lemma} \label{lem:norm_equiv}
Let $V \in \Kato_0$, and suppose that $H = -\Delta + V$ does not have an
eigenvalue or resonance at zero.  Then $H\Delta^{-1}$ acts as an isomorphism
on both $L^1(\R^3)$ and $\Kato$ (and also on $L^{3/2,1}$ if $V \in L^{3/2,1}$).

Moreover, $H$ is an invertible linear map from $\dot{H}^1(\R^3)$ to its dual,
and the positive quadratic form $\la |H|^sf,f\ra$ is equivalent to 
$\norm[f][\dot{H}^s]^2 = \la (-\Delta)^s f,f\ra$ for $|s| \leq 1$.

Finally, assuming that $V \in L^{3/2, 1}$, then $\||H|^{s/2} f\|_{L^2} \sim 
\|f\|_{\dot H^s}$ whenever $|s| < 3/2$.
\end{lemma}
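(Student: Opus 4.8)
The plan is to treat the four assertions in turn, each time reducing matters to the invertibility or strict positivity of an explicit operator built from $-\Delta$ and $V$, with the no-eigenvalue/no-resonance hypothesis entering only as an injectivity statement for a compact perturbation of the identity. Throughout I write $(-\Delta)^{-1}$ for the operator with kernel $(4\pi|x-y|)^{-1}$, and I record the scaling-critical form bound $\int_{\R^3}|V(x)||f(x)|^2\,dx \les \|V\|_{\Kato}\|f\|_{\dot H^1}^2$ — equivalently, $A:=(-\Delta)^{-1/2}V(-\Delta)^{-1/2}$ is bounded and self-adjoint on $L^2$ with $\|A\|\les\|V\|_{\Kato}$ — which follows from Schur's test applied to the kernel $(4\pi|x-y|)^{-1}|V(x)|^{1/2}|V(y)|^{1/2}$ with the weight $|V|^{1/2}$.

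The first assertion is immediate: $H(-\Delta)^{-1}=I+V(-\Delta)^{-1}=I+\hat{T}^\pm(0)$ in the notation of~\eqref{eq:T_hat}, and this is precisely the operator that Lemma~\ref{comp_lemma} at $\lambda=0$ shows to be invertible on $L^1$, on $\Kato$, and — when $V\in L^{3/2,1}$ — on $L^{3/2,1}$. For the second assertion I factor $H=(-\Delta)^{1/2}(I+A)(-\Delta)^{1/2}$, so it suffices that $I+A$ be invertible on $L^2$; approximating $V$ by bounded compactly supported functions in $\Kato$-norm and invoking Rellich's theorem exhibits $A$ as a norm-limit of compact operators, hence compact, and the only obstruction to invertibility — a nonzero $u$ with $(I+A)u=0$ — would give $f=(-\Delta)^{-1/2}u\in\dot H^1$ with $Hf=0$ weakly, i.e.\ a zero-energy eigenfunction or resonance. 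Thus $H:\dot H^1\to\dot H^{-1}=(\dot H^1)^*$ is an isomorphism.

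For the quadratic-form equivalence I first treat $s=1$. Decomposing $f=P_cf+\sum_jP_jf$ and using that $|H|$ commutes with each projection, $\la|H|f,f\ra=\la HP_cf,P_cf\ra+\sum_j|\lambda_j|\,\|P_jf\|_2^2$. Each eigenfunction $\psi_j$ decays exponentially (Agmon), so $\psi_j\in L^1\cap L^2$ and hence $\psi_j\in\dot H^1\cap\dot H^{-1}$ — note $\int_{|\xi|<1}|\xi|^{-2}\,d\xi<\infty$ in $\R^3$ — which gives $\|P_jf\|_{\dot H^{\pm1}}\les\|f\|_{\dot H^1}$ and, together with the form bound, the upper estimate $\la|H|f,f\ra\les\|f\|_{\dot H^1}^2$. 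For the reverse inequality it suffices, again by the decay of the $\psi_j$, to prove the coercivity $\la HP_cf,P_cf\ra\gtr\|P_cf\|_{\dot H^1}^2$; writing $g=(-\Delta)^{1/2}P_cf$ this becomes $\la(I+A)g,g\ra\gtr\|g\|_2^2$ on the closed subspace $\{g\perp(-\Delta)^{-1/2}\psi_j\ \forall j\}$, on which the form $\la(I+A)\cdot,\cdot\ra$ is nonnegative (because $HP_c\ge0$). Since $I+A$ is a compact perturbation of the identity and has trivial kernel on that subspace — a kernel vector would again yield an $f\in\dot H^1$ with $Hf=0$ — a standard weak-compactness argument upgrades nonnegativity to the uniform lower bound. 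Once $\la|H|f,f\ra\sim\|f\|_{\dot H^1}^2$ is known, the L\"owner--Heinz inequality (operator monotonicity of $t\mapsto t^\sigma$, $0\le\sigma\le1$, applied also to $|H|^{-1}\sim(-\Delta)^{-1}$) propagates the equivalence to all $|s|\le1$.

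The last assertion agrees with the form equivalence for $|s|\le1$; the new content is the range $1<|s|<\tfrac32$ under $V\in L^{3/2,1}$, and this is where the work lies. The key step will be to show that $H:\dot H^\alpha\to\dot H^{\alpha-2}$ is an isomorphism for every $0\le\alpha<\tfrac32$, which I would prove by complex interpolation between the two endpoints $H:L^2\to\dot H^{-2}$ and $H:\dot H^{3/2-\eps}\to\dot H^{-1/2-\eps}$, $\eps>0$ small; each endpoint follows by writing $H=(-\Delta)(I+(-\Delta)^{-1}V)$ and checking, via the relevant Sobolev and Lorentz--H\"older inequalities for $V\in L^{3/2,1}$, that $(-\Delta)^{-1}V$ is compact on the pertinent space, with invertibility reducing as before to the absence of an $L^2$-eigenfunction (respectively a $\dot H^1$-resonance) at zero. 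It then remains to transfer this to $\||H|^{s/2}f\|_2\sim\|f\|_{\dot H^s}$: peeling off the finite-dimensional eigenspaces — on which $|H|^{s/2}$ acts by the bounded scalars $|\lambda_j|^{s/2}$, and the relevant $\psi_j$ lie in $\dot H^s\cap\dot H^{-s}$ for $s<\tfrac32$ — reduces to $\mathrm{ran}\,P_c$, where $HP_c\ge0$, and on that subspace one passes from the isomorphism property to comparability of $(HP_c)^{s/2}$ with $(-\Delta)^{s/2}$ through the Balakrishnan subordination formula together with the resolvent identity $(-\Delta+\mu)^{-1}-(H+\mu)^{-1}=(H+\mu)^{-1}V(-\Delta+\mu)^{-1}$, estimating the resulting $\mu$-integral by the uniform-in-$\mu$ Lorentz mapping bounds for $R_0(-\mu)$ and for the perturbative factor $(I+R_0(-\mu)V)^{-1}P_c$ (the latter obtained by interpolating the $L^\infty$ and $L^{3,\infty}$ bounds of Lemma~\ref{comp_lemma}). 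I expect the main obstacle to be exactly this last point — controlling the $\mu$-integral uniformly as $\mu\downarrow0$, where the no-resonance hypothesis is essential, and keeping track of the poles of the resolvent at $\mu=-\lambda_j$, which the projection $P_c$ is designed to remove. The range $s\in(-\tfrac32,0)$ then follows by duality.
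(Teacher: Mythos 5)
Your handling of the first two assertions is essentially the paper's. For $H\Delta^{-1}$ you reduce (as the paper does) to Lemma~\ref{comp_lemma} at $\lambda=0$; for $H\colon\dot H^1\to\dot H^{-1}$ you use the symmetric factorization $H=(-\Delta)^{1/2}(I+A)(-\Delta)^{1/2}$ with $A=(-\Delta)^{-1/2}V(-\Delta)^{-1/2}$ compact, while the paper phrases the same Fredholm argument via boundedness and invertibility of $\Delta^{-1}H$ on $\dot H^1$; these are cosmetically different packagings of the same idea. For $|s|\leq1$ your direct decomposition $f=P_cf+\sum_jP_jf$, coercivity of $I+A$ on $\ker$-complement, and L\"owner--Heinz is a slightly more hands-on version of the paper's argument, which simply observes that $|H|-H$ is finite rank with range in $\dot H^1\cap\dot H^{-1}$, concludes $|H|\colon\dot H^1\to\dot H^{-1}$ is an isomorphism, and takes the square root. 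Both are fine.

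Where you genuinely depart from the paper, and where your argument is incomplete, is the transfer step for $1<|s|<\frac32$. You correctly set up $H\colon\dot H^\alpha\to\dot H^{\alpha-2}$ as an isomorphism (the paper does this directly on $\dot H^s$, $s\in(1/2,3/2)$, via boundedness and invertibility of $(-\Delta)^{-1}H$, invoking Lemma~\ref{comp_lemma}; your endpoint-plus-interpolation route is workable, provided you interpolate $H^{-1}$ as well as $H$), and you peel off the bound states. But you then propose to pass from the isomorphism $|H|\colon\dot H^s\to\dot H^{s-2}$ to $\||H|^{s/2}f\|_2\sim\|f\|_{\dot H^s}$ via the Balakrishnan subordination formula and resolvent differences, and you yourself flag the $\mu\downarrow0$ integral as the unresolved obstacle. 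This is a real gap: as written, the hardest step of the lemma is not carried out, and it is not clear that the proposed resolvent estimates suffice to control the subordination integral near the spectral edge. The paper avoids this entirely with a short functional-analytic step: since $\dot H^s$ and $\dot H^{s-2}$ are in duality via the $\dot H^{s-1}$ inner product and $|H|$ is positive self-adjoint, the isomorphism $|H|\colon\dot H^s\to\dot H^{s-2}$ upgrades to $\sqrt{|H|}\colon\dot H^s\to\dot H^{s-1}$ being an isomorphism; composing with the already-established $|H|^{(s-1)/2}\colon\dot H^{s-1}\to L^2$ (valid since $s-1\in(-1/2,1/2)\subset[-1,1]$) finishes. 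That square-root-of-a-coercive-form argument is exactly the $s=1$ step shifted by $(-\Delta)^{(s-1)/2}$, and it sidesteps all of the kernel estimates your approach would require.

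One further unproved assertion in your write-up: you state that $\psi_j\in\dot H^s\cap\dot H^{-s}$ for $s<\frac32$, which the paper derives by bootstrapping $\psi_j=R_0(E)V\psi_j$ using $V\in L^{3/2,1}$ (first $\psi_j\in L^\infty$, then $(-\Delta+1)\psi_j\in L^{3/2,1}$, then the full range via exponential decay for negative $s$). You should either reproduce that bootstrap or cite it; as it stands the claim is bare.
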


\begin{proof}
The statements about $H\Delta^{-1} = -(I + VR_0(0))$ acting on $L^1$
and on $\Kato$ are a restatement of Lemma~\ref{comp_lemma} with $\lambda = 0$.
It is well known that $V\in \Kato$ is form-bounded with respect to the
Laplacian (see for example \cite{Si82}, p.\ 459), hence $\Delta^{-1}H$ is a 
bounded map on $\dot{H}^1(\R^3)$.
Its invertibility (assuming zero is a regular point of the spectrum of $H$) 
follows from a similar compactness and Fredholm alternative argument.
We refer to~\cite{michael} for the details in the general case where $V$ is a
locally finite measure.

Note that $H - |H|$ is a finite linear combination of projections onto the
point spectrum of $H$, and the same is true of $H^{-1} - |H|^{-1}$
provided zero is not an eigenvalue or resonance.  Each eigenfunction
is exponentially decaying and belongs to
$\dot{H}^1(\R^3) \cap \dot{H}^{-1}(\R^3)$,
so the projections are bounded on any $H^s, |s| \leq 1$.
Then $|H|$ is also an isomorphism between $\dot{H}^1(\R^3)$ and its dual space.
Its Hermitian square root $\sqrt{|H|}:\dot{H}^1 \to L^2$ is another
isomorphism, 
meaning $\la |H|f,f\ra = \norm[\sqrt{|H|}f][2]^2 \sim~\norm[f][\dot{H}^1]^2$.

Since the quadratic forms $\langle |H| f, f \rangle$ and $\langle -\Delta f, f 
\rangle$ are equivalent --- each is form-bounded by a multiple of the other --- 
the same holds for $\langle |H|^s f, f \rangle$ and $\langle (-\Delta)^s f, f 
\rangle$ for $0 \leq s \leq 1$. Thus $\||H|^{s/2} f\|_{L^2} \sim \|f\|_{\dot 
H^s}$ for $0 \leq s \leq 1$ and then by duality the same is true when $-1 \leq s 
\leq 0$.

Next, assume that $V \in L^{3/2, 1}$. Since the eigenstates of $H$ are in $\dot 
H^1$, by bootstrapping in the eigenstate equation $f = R_0(E) V f$, $E<0$, we 
first obtain that $f \in L^\infty$, then that $(-\Delta +1) f \in L^{3/2, 1}$. 
Consequently $f \in \dot H^s$ for any $s \in (-3/2, 3/2)$ --- due to exponential 
decay for the negative range.

Further note that now $H=-\Delta+V$ is a bounded operator from $\dot H^s$ to 
$\dot H^{s-2}$ for any $s \in (1/2, 3/2)$, hence $(-\Delta)^{-1} H$ is a bounded 
map on $\dot H^s$. Its invertibility is a consequence of Lemma \ref{comp_lemma}. 
Thus $|H|$ is an isomorphism between $\dot H^s$ and $\dot H^{s-2}$. Since $\dot 
H^s$ and $\dot H^{s-2}$ are dual with respect to the $\dot H^{s-1}$ dot product, 
we obtain that $\sqrt {|H|} \in \B(\dot H^s, \dot H^{s-1})$ is an isomorphism. 
Since $|H|^{(s-1)/2}$ is an isomorphism from $\dot H^{s-1}$ to $L^2$, we obtain 
that $|H|^{s/2}$ is an isomorphism from $\dot H^s$ to $L^2$ for any $s \in [0, 
3/2)$. The conclusion extends by duality to $s \in (-3/2, 0]$.
\end{proof}

It is now an exercise to show that $T^\pm(\rho,x,y)$ falls within the
framework of Proposition~\ref{thm:Wiener_2}.  Without loss of
generality, the result is stated in terms of $T^-$ alone.

\begin{theorem} \label{thm:goal_lattice}
Let $V \in \Kato_0$ be a scalar potential in $\R^3$ satisfying the assumptions
of Theorem~\ref{main_theorem}.
Then
\begin{align*}
\norm[T^-(\rho)][\U_X] \le 
  \frac{\norm[V][\Kato]}{4\pi}  \\
\text{and}\quad 
 \bignorm[(\1 + T^-)^{-1}][\U_X] < \infty,
\end{align*}
where $X$ may be any one of the function spaces
$\Kato^\theta, 0 \leq \theta \leq 1$.  In particular this includes
$\Kato = \Kato^1$ and $L^1(\R^3) = \Kato^0$.

If $V \in L^{3/2,1}$ then one may also choose $X$ to be $L^{3/2,1}$
or any $L^{p,q}$, $1 < p < \frac32$, $1 \leq q \leq \infty$ by
replacing the right side of the first inequality with $C \norm[V][3/2,1]$.
\end{theorem}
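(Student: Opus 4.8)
The plan is to verify the hypotheses of Proposition~\ref{thm:Wiener_2} for the cone kernel $T^-$ in each algebra $\U_X$, first for $X=\Kato^\theta$, $0\le\theta\le1$, and then for $X=L^{p,q}$ under the stronger hypothesis $V\in L^{3/2,1}$. The operator-norm estimate $\norm[T^-][\U_X]\le\norm[V][\Kato]/(4\pi)$ is a short direct computation; the pointwise invertibility of $I+\hat T^-(\lambda)$ is supplied by Lemma~\ref{comp_lemma}; and the substance of the argument lies in checking the two structural conditions C1 and C2.

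For the norm bound, note that $T^-(\rho,x,y)=(4\pi|x-y|)^{-1}V(x)\,\delta_{|x-y|}(\rho)$ is, for fixed $(x,y)$, a point mass of total variation $|V(x)|/(4\pi|x-y|)$, so its marginal is $M(T^-)(x,y)=|V(x)|/(4\pi|x-y|)$ and $\norm[T^-][\U_X]=\norm[M(T^-)][\B(X)]$. On $L^1(\R^3)$, Fubini gives $\norm[M(T^-)f][1]\le\frac1{4\pi}\int|f(y)|\int\frac{|V(x)|}{|x-y|}\,dx\,dy\le\frac{\norm[V][\Kato]}{4\pi}\norm[f][1]$; on $\Kato$, the pointwise estimate $\int|f(y)||x-y|^{-1}\,dy\le\norm[f][\Kato]$ gives $|M(T^-)f(x)|\le\frac{|V(x)|}{4\pi}\norm[f][\Kato]$, hence $\norm[M(T^-)f][\Kato]\le\frac{\norm[V][\Kato]}{4\pi}\norm[f][\Kato]$. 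Complex interpolation between these endpoints carries the same constant to every $\Kato^\theta=(L^1,\Kato)_{[\theta]}$. When $V\in L^{3/2,1}$ one instead writes $M(T^-)f=|V|\cdot\big((4\pi|\cdot|)^{-1}*f\big)$, uses that $(4\pi|x|)^{-1}\in L^{3,\infty}(\R^3)$ so that Hardy--Littlewood--Sobolev in Lorentz spaces sends $L^{p,q}\to L^{r,q}$ with $\frac1r=\frac1p-\frac23$, and then Lorentz Hölder against $V$ returns $L^{r,q}$ to $L^{p,q}$ with constant $\les\norm[V][3/2,1]$; this covers $1<p<\tfrac32$ as well as the endpoint $X=L^{3/2,1}$ (where $r=\infty$).

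Condition C2 follows from the distal Kato property~\eqref{eq:distalKato}: since $\chi_{|\rho|\ge R}T^-$ is supported on $\{|x-y|\ge R\}$, the two computations above bound its norm by $\frac1{4\pi}\sup_y\int_{|x-y|>R}|V(x)||x-y|^{-1}\,dx\to0$; in the Lorentz setting one first peels off a bounded compactly supported piece of $V$ (estimated by a crude dyadic bound using $p'>3$) and is left with a remainder of small $L^{3/2,1}$ norm. Condition C1 is the crux, and I expect it to be the main obstacle. The kernel $T^-$ is a singular measure concentrated on the light cone $\rho=|x-y|$, so $T^-(\cdot)-T^-(\cdot-\delta)$ does not tend to zero in $\U_X$; but the convolution structure in $\rho$ regularizes under iteration. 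Writing $(T^-)^2(\rho,x,y)=(4\pi)^{-2}V(x)\int_{\R^3}\frac{V(w)}{|x-w||w-y|}\,\delta\big(\rho-|x-w|-|w-y|\big)\,dw$ and applying the co-area formula over the confocal ellipsoid $\{|x-w|+|w-y|=\rho\}$, one finds that $(T^-)^2$ is absolutely continuous in $\rho$, each fibre $(T^-)^2(\cdot,x,y)$ being an $L^1_\rho$ density of total variation at most $M(T^-)^2(x,y)$. Translation is continuous on $L^1_\rho$, the difference $(T^-)^2(\cdot)-(T^-)^2(\cdot-\delta)$ is dominated fibrewise by $2M(T^-)^2$, which is a bounded operator on $X$, and dominated convergence then yields $\norm[(T^-)^2(\cdot)-(T^-)^2(\cdot-\delta)][\U_X]\to0$, so C1 holds with $N=2$. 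This runs parallel to the verification in \cite{becgol}.

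Finally, for each $\lambda\in\R$ one has $\hat T^-(\lambda)=VR_0^\pm(\lambda^2)$, with the sign dictated by $\sgn\lambda$ and spectral parameter $\lambda^2\ge0$. Under the spectral assumptions of Theorem~\ref{main_theorem}, Lemma~\ref{comp_lemma} gives that $I+VR_0^\pm(\lambda^2)$ is invertible on $L^1$ and on $\Kato$ with the inverse bounded on both; since $L^1\cap\Kato$ is dense in $\Kato^\theta$ and $\hat T^-(\lambda)$ is bounded there (being dominated by $M(T^-)$), this common inverse extends to a two-sided inverse of $I+\hat T^-(\lambda)$ on every $\Kato^\theta$ — and, via the last clause of Lemma~\ref{comp_lemma}, on $L^{3/2,1}$ and every intermediate $L^{p,q}$, $1<p<\tfrac32$, when $V\in L^{3/2,1}$. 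With C1, C2, and pointwise invertibility established, Proposition~\ref{thm:Wiener_2} produces $(\1+T^-)^{-1}\in\U_X$, which completes the proof.
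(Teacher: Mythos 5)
Your architecture differs from the paper's in one significant respect: you propose to verify the hypotheses of Proposition~\ref{thm:Wiener_2} directly in every algebra $\U_{\Kato^\theta}$ (and in $\U_{L^{p,q}}$), whereas the paper invokes the Wiener theorem \emph{only} in $\U_{L^1}$ and then obtains $(\1+T^-)^{-1}\in\U_{\Kato}$ from the resolvent identity $(I+\hat T^-)^{-1}=I-V(I+(\hat T^+)^*)^{-1}R_0^-$, together with $\check R^-\in\U_{L^\infty,\Kato}$ and duality, interpolating to cover $\Kato^\theta$ afterwards. That is a real simplification: it avoids having to re-verify C1, C2, and pointwise invertibility inside each of the more exotic algebras.

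The serious gap is in your verification of C1. You claim that fibrewise $L^1_\rho$-continuity of translation, together with pointwise domination of the marginals $M_\delta(x,y)=\norm[(T^-)^2(\cdot,x,y)-(T^-)^2(\cdot-\delta,x,y)][L^1_\rho]$ by $2M(T^-)^2(x,y)$, forces $\norm[M_\delta][\B(X)]\to 0$ by dominated convergence. That inference is false: the operator norms $\norm[\cdot][\B(L^1)]=\esssup_y\int(\cdot)\,dx$ and $\norm[\cdot][\B(\Kato)]$ are built from essential suprema, and pointwise a.e.\ convergence of a dominated kernel does not control them (take $K_0=\chi_{[0,1]}(x)\chi_{[0,1]}(y)$ and $K_n=\chi_{[0,1]}(x)\chi_{[1-1/n,1]}(y)$: $K_n\to0$ a.e., $K_n\le K_0$, yet $\norm[K_n][\B(L^1)]\equiv1$). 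What is needed is a \emph{uniform} modulus of continuity for the translation in $\rho$, and this is precisely what the paper extracts by a very different device: after reducing to $V\in C^b_c$, it uses the Kenig--Ruiz--Sogge bound $\norm[\hat T^-(\lambda)][\B(L^{4/3})]\les|\lambda|^{-1/2}$ to get $\norm[\hat T^-(\lambda)^{10}][\B(L^1)]\les(1+\lambda^2)^{-2}$, hence uniform bounds on $\norm[\partial_\rho(T^-)^{10}(\rho)][\B(L^1)]$, from which the $C\delta$ estimate follows by integrating the derivative against a cutoff $\eta$ (the cutoff being supplied by the already-established C2). In fact a power as low as $N=2$ cannot produce the needed decay in $\lambda$ via this route, since $\hat T^-(\lambda)^2:L^1\to L^{4/3}\to L^1$ picks up no smallness at all; it is entirely unclear your co-area description of $(T^-)^2$ has enough uniformity to close the argument. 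You also omit the reduction to bounded, compactly supported $V$ (and the continuity of $V\mapsto T^-$ in $\Kato\to\U_{L^1}$ that justifies passing to the limit), which your co-area computation implicitly requires to make sense of the density and its total variation.
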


Before attempting a proof that spans the gamut of admissible spaces
$X$, it will be convenient to introduce one additional bit of abstract
notation.  Let $X$ and $Y$ be two Banach lattices of functions
over $(\X, \mu)$ and $({\mathcal Y}, \nu)$ respectively.

\begin{definition}
Let $\U_{X, Y}$ be the set of bounded operators from $Y_y \M_{\rho}$ to
$X_x \M_{\rho}$ of the form
$$
(T f)(\rho,x) = \int_{\mathcal Y} \int_{-\infty}^{\infty}
f(\rho-\sigma, y) T(\sigma, x, y) \dd \sigma \,\nu(dy),
$$
where $T(\rho, x, y)$ has the property that $T(\rho, x, y) \dd \rho \in 
\M_{\rho}$ for a.e.\ $x$ and $y$ and
$$
M(T)(x,y) = \int_{-\infty}^{\infty} |T(\rho, x, y)| \dd \rho 
$$
is the integral kernel for a bounded operator from $Y$ to $X$.
$\U_{X, Y}$ is a Banach space under the norm $\|T\|_{\U_{X, Y}} = 
\|M(T)\|_{\B(Y, X)}$
\end{definition}

Though in general $\U_{X, Y}$ cannot be an algebra under the composition of 
operators, it has the more general property 
that for any three Banach lattices $X$, $Y$, and $Z$
$$
\|S T\|_{\U_{X, Z}} \leq \|S\|_{\U_{X, Y}} \|T\|_{\U_{Y, Z}}.
$$
This structure is an algebroid.

The Fourier transform defined by (\ref{fourier}) fulfills $\|\hat 
T(\lambda)\|_{\B(Y, X)} \leq \|T\|_{\U_{X, Y}}$ and $(ST)^{\wedge}(\lambda) = 
\hat S(\lambda) \hat T(\lambda)$. Kernels $T \in \U_{X, Y}$ define, through 
(\ref{image}), bounded operators from $Y$ to $X_x \M_{\rho}$.

\begin{proof}[Proof of Theorem \ref{thm:goal_lattice}]

In our previously introduced notation, $T^-$ has the kernel
$$
T^-(\rho, x, y) = (4\pi \rho)^{-1} V(x) \delta_{|x-y|}(\rho).
$$
This is a finite measure in $\rho$, for all $x$ and $y$, and
$$
M(T^-)(x,y) = \int_{-\infty}^{\infty} |T^-(\rho, x, y)| \dd \rho 
= \frac {|V(x)|}{4\pi|x-y|}.
$$
$M(T^-)$ belongs to $\B(L^1)$ exactly when $V \in \Kato$. Indeed,
\begin{equation*}\begin{aligned}
\Big\|\int_{\R^3} \frac {|V(x)|}{4\pi|x-y|} f(y) \dd y\Big\|_{L^1_x} &= 
\int_{\R^3} \int_{\R^3} \frac {|V(x)|}{4\pi|x-y|} |f(y)| \dd y \dd x \\
&\leq \sup_y \int_{\R^3} \frac {|V(x)|}{4\pi|x-y|} \dd x \int_{\R^3} |f(y)| \dd 
y \leq \frac{\|V\|_{\Kato}}{4\pi} \|f\|_1
\end{aligned}\end{equation*}
with equality being nearly achieved if $f$ is concentrated on points $y \in 
\R^3$
that nearly optimize the supremum in the Kato norm.
These computations show that $T^- \in \U_{L^1}$ and $\|T^-\|_{\U_{L^1}} \le 
{\|V\|_{\Kato}}/{4\pi}$.

To work in other spaces, consider $\check{R}^-(\rho,x,y) = (4\pi\rho)^{-1}
\delta_{|x-y|}(\rho)$.  Its Fourier transform is the family of free resolvents
$R_0^-(\lambda^2)$.  An elementary calculation similar to the one above
shows that $\check{R}^- \in \U_{\Kato^*, L^1} \cap \U_{L^\infty, \Kato}$ with 
norm $1/4\pi$.
Multiplication by $V$ maps $L^\infty$ to $\Kato$, and maps $\Kato^*$
to $L^1$, so $T^- = V\check{R}^-$ belongs to $\U_\Kato$.

Suppose the second conclusion of the theorem is valid for $X = L^1(\R^3)$.
Then the resolvent identity $(I + \hat{T}^-(\lambda))^{-1}
= I - V(I + \hat{T}^+(\lambda)^*)^{-1}R_0^-(\lambda)$
leads to a bound
\begin{equation*}
\norm[(\1 + T^-)^{-1}][\U_\Kato]
\le 1 + \norm[V][\Kato]\bignorm[\big((\1 + T^+)^*\big)^{-1}][\U_{L^\infty}]
\norm[\check{R}^-][\U_{L^\infty},\Kato] < \infty
\end{equation*}
The remaining cases $X = \Kato^\theta$ follow by interpolation.
If $V \in L^{3/2,1}$ the argument can be repeated for Lorentz spaces by
embedding $L^{3/2,1} \subset \Kato$ and $\Kato^* \subset L^{3,\infty}$.

The case $X = L^1(\R^3)$ was treated in~\cite[Theorem 2]{becgol}
as an application of Proposition~\ref{thm:Wiener_2}, and is summarized below.

We have determined that $T^- \in \U_{L^1}$, with $\norm[T^-][\U_{L^1}]
\leq \norm[V][\Kato]/4\pi$.  Lemma~\ref{comp_lemma} shows that
$I + \hat{T}^-(\lambda)$ is an invertible element of $\B(L^1)$ for each
$\lambda \in \R$ provided that hypotheses of Theorem~\ref{main_theorem}
are satisfied.

When applied to $T^-$ specifically, condition (C2) of Proposition 
\ref{thm:Wiener_2}
is equivalent to the statement
$$
\lim_{R \to \infty} \Big\|\chi_{|x-y|>R} \frac {|V(x)|}{4\pi|x-
y|}\Big\|_{\B(L^1)} = 0,
$$
which in turn reduces to the distal Kato property (\ref{eq:distalKato}).

It suffices to verify condition (C1) for bounded and compactly supported
potentials.  Norm-continuity of the mapping
$V \in \Kato \mapsto T^- \in \U_{L^1}$ allows the extension of
property (C1) to all potentials $V \in \Kato_0$.

If $V \in C^b_c(\R^3)$, then $\hat{T}^-(\lambda) = VR_0^-(\lambda^2)$
maps $L^1(\R^3)$ to $L^{4/3}(\R^3)$ and vice versa.  
By Theorem 2.3 of~\cite{KeRuSo} and scaling, it is also true that
\begin{equation*}
\norm[\hat{T}^-(\lambda)f][\frac{4}{3}] \les
|\lambda|^{-1/2}\norm[V][2]\norm[f][\frac{4}{3}].
\end{equation*}
Then $\norm[\hat{T}^-(\lambda)^{10}][\B(L^1)]
\les (1 + \lambda^2)^{-2}$, which is more than sufficient to imply that
$\norm[\partial_\rho^k (T^-)^{10}(\rho,x,y)][\B(L^1)]$ is uniformly bounded
over all $\rho \in \R$ and $k = 0,1$.

The compact-approximation property (C2) is preserved by products in $\U_{L^1}$,
therefore it suffices to verify that $\eta(\rho)(T^-)^{10}$ satisfies condition 
(C1) for all
compactly supported functions $\eta$.  However
$\norm[\partial_\rho\, \eta (T^-)^{10}(\rho,x,y)][\B(L^1)]$ will be uniformly 
bounded,
and vanishes away from the support of $\eta$.  It follows that
\begin{align*}
&\bignorm[\eta(T^-)^{10}(\rho,x,y) - \eta(T^-)^{10}(\rho-\delta,x,y)][\U_{L^1}] 
\\
&\leq \  \int_\R \bignorm[|\eta (T^-)^{10}(\rho,x,y)
- \eta (T^-)^{10}(\rho-\delta,x,y)|][\B(L^1)]\,d\rho
\leq C \delta
\end{align*}
which converges to zero as $\delta \to 0$.
The constant depends on parameters such as 
the size and support of $V$ and of $\eta$ but is independent of $\delta$.
\end{proof}

A kernel's membership in $\U_X$ ensures only integrability in the $\rho$ 
variable. Several statements within Theorem \ref{main_theorem} require
an additional weighted integrability condition $\rho T(\rho,x,y) \in \U_{\Kato, 
L^1}$.
In this setting, we have the  following extension of Theorem 
\ref{thm:goal_lattice}:
\begin{proposition} \label{thm:2ndgoal}
Assume that $V \in \Kato_0$ satisfies the conditions of 
Theorem~\ref{main_theorem}.
Then
\begin{align*}
\norm[\rho T^-][\U_{\Kato, L^1}] \le 
  \frac{\norm[V][\Kato]}{4\pi} \\
\text{and}\quad 
 \bignorm[\rho (\1 + T^-)^{-1}
  ][\U_{\Kato, L^1}] < \infty.
\end{align*}
\end{proposition}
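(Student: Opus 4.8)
The plan is to reduce both claims to the estimates of Theorem~\ref{thm:goal_lattice}, exploiting the key fact that multiplying a kernel by the scalar $\rho$ cancels the singular factor $1/\rho$ built into $T^-$: indeed $\rho\, T^-(\rho, x, y) = (4\pi)^{-1} V(x)\,\delta_{|x-y|}(\rho)$. First I would establish $\norm[\rho T^-][\U_{\Kato, L^1}] \le \norm[V][\Kato]/4\pi$ by the same kind of direct computation that opens the proof of Theorem~\ref{thm:goal_lattice}. The displayed kernel $\rho T^-$ is a finite measure in $\rho$ for every $x,y$, with marginal
$$
M(\rho T^-)(x, y) = \int_{-\infty}^\infty |\rho\, T^-(\rho, x, y)|\dd \rho = \frac{|V(x)|}{4\pi},
$$
so the associated integral operator is $f \mapsto (4\pi)^{-1} V(x)\int_{\R^3} f(y)\dd y$; since its image is a scalar multiple of $V$ and $\big|\int f\big| \le \norm[f][1]$, it maps $L^1(\R^3)$ to $\Kato$ with norm exactly $\norm[V][\Kato]/4\pi$. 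This gain --- no leftover $|x-y|^{-1}$ --- is precisely what makes $\U_{\Kato, L^1}$, rather than $\U_\Kato$, the natural target here.

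For the second claim I would use that multiplication by $\rho$ acts as a derivation for the convolution product of kernels, $\rho(AB) = (\rho A)B + A(\rho B)$ (from writing $\rho = \tau + (\rho-\tau)$ inside the convolution integral defining a composition), together with $\rho\,\1 = 0$, valid because $\1$ is supported on $\{\rho = 0\}$. Applying multiplication by $\rho$ to $(\1 + T^-)(\1 + T^-)^{-1} = \1$ and using $\rho(\1+T^-) = \rho T^-$ gives $(\1+T^-)\big(\rho\,(\1+T^-)^{-1}\big) = -(\rho T^-)(\1+T^-)^{-1}$, and composing on the left with $(\1 + T^-)^{-1} \in \U_\Kato$ yields the closed form
$$
\rho\,(\1 + T^-)^{-1} = -(\1 + T^-)^{-1}\,(\rho T^-)\,(\1 + T^-)^{-1}.
$$
The right-hand side is a product of three factors in $\U_\Kato$, $\U_{\Kato, L^1}$, and $\U_{L^1}$ respectively (the outer two from Theorem~\ref{thm:goal_lattice}, the middle from the first step), so two uses of the algebroid inequality $\norm[ST][\U_{X,Z}] \le \norm[S][\U_{X,Y}]\norm[T][\U_{Y,Z}]$, with intermediate spaces $\Kato$ and $L^1$, place $\rho\,(\1+T^-)^{-1}$ in $\U_{\Kato, L^1}$ with norm at most $\norm[(\1+T^-)^{-1}][\U_\Kato]\cdot(\norm[V][\Kato]/4\pi)\cdot\norm[(\1+T^-)^{-1}][\U_{L^1}] < \infty$.

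The step I expect to require genuine care --- the main obstacle --- is that $\rho\,(\1+T^-)^{-1}$ is not, a priori, a kernel living in any $\U$-space, so one cannot simply manipulate it as above; the cancellation ``$(\1+T^-)^{-1}(\1+T^-)(\rho B) = \rho B$'' needs $(\1+T^-)(\rho B)$ to already belong to a well-behaved class before $(\1+T^-)^{-1}$ can be applied. I would resolve this by retracing the proof of Proposition~\ref{thm:Wiener_2}: there $(\1+T^-)^{-1} = \1 + S$ with $S$ assembled from finitely many pieces, each built from $T^-$ via convolution with Schwartz functions, partitions of unity, and geometric series whose terms have $\U$-norm strictly less than $1$. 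Multiplication by $\rho$ passes through every one of these operations --- $\rho\,\1 = 0$ kills the identity part, the Leibniz rule distributes $\rho$ across each product, and the small-norm geometric series survive the Leibniz expansion --- and, because $\rho T^- \in \U_{\Kato, L^1}$, every resulting piece lands in $\U_{\Kato, L^1}$; summing them shows $\rho S = \rho\,(\1+T^-)^{-1} \in \U_{\Kato, L^1}$, after which the closed form displayed above (now legitimate) supplies the quantitative bound. Everything else is bookkeeping with the norms already recorded in Theorem~\ref{thm:goal_lattice}.
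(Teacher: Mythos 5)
Your proposal is correct and follows essentially the same two-step plan as the paper: compute $M(\rho T^-)(x,y) = (4\pi)^{-1}|V(x)|$ to see $\rho T^- \in \U_{\Kato,L^1}$, then establish the closed-form identity $\rho(\1+T^-)^{-1} = -(\1+T^-)^{-1}(\rho T^-)(\1+T^-)^{-1}$ and use the algebroid multiplication $\U_\Kato \cdot \U_{\Kato,L^1} \cdot \U_{L^1} \subset \U_{\Kato,L^1}$. (Incidentally, the paper's proof contains a typo writing $\U_{L^1,\Kato}$ where $\U_{\Kato,L^1}$ is meant; your spaces are the consistent ones.) The one place you diverge is in how you justify the closed-form identity. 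You derive it via a Leibniz rule for multiplication by $\rho$ in the convolution algebra, applied to $(\1+T^-)(\1+T^-)^{-1}=\1$, and you then anticipate — correctly — that this presumes $\rho\,(\1+T^-)^{-1}$ already lives in a class where the algebra can be applied, proposing to trace through the Wiener construction as a remedy. The paper sidesteps this circularity more lightly: it takes the Fourier transform of the proposed identity, observes that it becomes the resolvent derivative formula $\partial_\lambda(I+\hat T(\lambda))^{-1} = -(I+\hat T)^{-1}\partial_\lambda\hat T\,(I+\hat T)^{-1}$, and proves the latter by dividing the second resolvent identity by $\lambda_1 - \lambda_2$ and passing to the strong limit. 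That route works entirely with the bounded operators $\hat T(\lambda)$, $\hat S(\lambda)$ on $X$ at each fixed $\lambda$, so there is no a priori regularity question to resolve; the right-hand side is manifestly in $\U_{\Kato,L^1}$, and the distributional equality on the $\rho$ side then pins down $\rho\,(\1+T^-)^{-1}$ uniquely. Your retracing of the Wiener construction would give the same conclusion but at considerably more effort; the Fourier-side resolvent argument is the shorter path to the same identity.
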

\begin{proof}
$\rho T^-(\rho)$ has the kernel
$$
\rho T^-(\rho)(x, y, \rho) = (4\pi)^{-1} V(x) \delta_{|x-y|}(\rho).
$$
Therefore
$$
M(\rho T^-(\rho))(x, y) = (4\pi)^{-1} |V(x)|
$$
is the kernel of a bounded operator from $L^1$ to $\Kato$ as long as $V \in 
\Kato$. 
This implies that $\norm[\rho T^-][\U_{L^1, \Kato}] \le 
\frac{\norm[V][\Kato]}{4\pi}$.

The second inequality is obtained from Theorem \ref{thm:goal_lattice}, because 
of the fact that
\begin{equation}\lb{derivata}
\rho\big(\mathbf 1 +T)^{-1} = -(\mathbf 1 + T)^{-
1}(\rho)\big(\rho T(\rho)\big)(\mathbf 1 + T)^{-1}(\rho).
\end{equation}
By taking the Fourier transform, (\ref{derivata}) is equivalent to
\begin{equation}\lb{rezol}
\partial_{\lambda}(I + \hat T(\lambda))^{-1} = - (I + \hat 
T(\lambda))^{-1} \partial_{\lambda} \hat T(\lambda) (I + \hat T(\lambda))^{-1}.
\end{equation}
To prove (\ref{rezol}), one divides the formula
\begin{equation*}
(I + \hat T(\lambda_1))^{-1} - (I + \hat T(\lambda_2))^{-1} = (I + \hat 
T(\lambda_1))^{-1} (\hat T(\lambda_2) - \hat T(\lambda_1)) (I + \hat 
T(\lambda_2))^{-1}
\end{equation*}
by $\lambda_1-\lambda_2$ and passes to the strong limit.
\end{proof}

It is also useful to obtain bounds on $\rho^\theta T(\rho, x, y)$ for $0 \leq 
\theta \leq 1$:
\begin{corollary}\lb{general}
Assume that $V \in \Kato_0$ satisfies the conditions of 
Theorem~\ref{main_theorem}.
Then, for $0 \leq \theta_1 \leq \theta_2 \leq 1$,
\begin{align*}
\norm[\rho^{\theta_2-\theta_1} T^-][\U_{\Kato_{\theta_2}, \Kato_{\theta_1}}] \le 
  \frac{\norm[V][\Kato]}{4\pi} \\
\text{and}\quad 
 \bignorm[|\rho|^{\theta_2-\theta_1} (\1 + T^-)^{-1}
  ][\U_{\Kato_{\theta_2}, \Kato_{\theta_1}}] < \infty.
\end{align*}
\end{corollary}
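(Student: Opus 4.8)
The plan is to deduce Corollary~\ref{general} from its two endpoint cases by complex (Stein) interpolation of a single analytic family. The diagonal case $\theta_1=\theta_2$ (where $|\rho|^{\theta_2-\theta_1}T^-=T^-$) is Theorem~\ref{thm:goal_lattice}, and the extreme case $(\theta_1,\theta_2)=(0,1)$ is Proposition~\ref{thm:2ndgoal}; the corollary interpolates between the two. As in Theorem~\ref{thm:goal_lattice} it suffices to treat $T^-$, and (after the usual reduction to bounded, compactly supported $V$, which is harmless by norm continuity of $V\mapsto T^-$) one considers on the strip $\{0\le\mathrm{Re}\,z\le1\}$ the analytic family $z\mapsto|\rho|^{z}T^-$ of elements of the algebroid. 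Since $T^-$ is supported on $\{\rho=|x-y|\ge0\}$ one has $|\rho|^{z}=\rho^{z}$ there, and since $|\,|\rho|^{z}\,|=|\rho|^{\mathrm{Re}\,z}$ carries no dependence on $\mathrm{Im}\,z$, the marginal is $M(|\rho|^{z}T^-)(x,y)=M(|\rho|^{\mathrm{Re}\,z}T^-)(x,y)=(4\pi)^{-1}|V(x)|\,|x-y|^{\mathrm{Re}\,z-1}$; in particular the family is dominated on each vertical line by a fixed positive kernel and is uniformly bounded on $\mathrm{supp}\,V$, so there is no exponential growth to contend with and Stein interpolation for an analytic family of operators between interpolation couples applies.

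On the line $\mathrm{Re}\,z=0$ the marginal of $|\rho|^{it}T^-$ equals $M(T^-)$, so by Theorem~\ref{thm:goal_lattice} this element lies in $\U_{\Kato^{\theta_0}}$ (i.e.\ maps $\Kato^{\theta_0}$ to $\Kato^{\theta_0}$) with norm $\le\|V\|_{\Kato}/(4\pi)$, for any chosen $\theta_0\in[0,1]$. On the line $\mathrm{Re}\,z=1$ the marginal is $M(\rho T^-)(x,y)=(4\pi)^{-1}|V(x)|$, so by Proposition~\ref{thm:2ndgoal} this element lies in $\U_{\Kato,L^1}$ (mapping $L^1=\Kato^{0}$ to $\Kato=\Kato^{1}$), again with norm $\le\|V\|_{\Kato}/(4\pi)$. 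Put $\delta:=\theta_2-\theta_1\in[0,1]$. Using the reiteration identity $(\Kato^{a},\Kato^{b})_{[\delta]}=\Kato^{(1-\delta)a+\delta b}$ for the scale $\Kato^{\theta}=(L^1,\Kato)_{[\theta]}$, Stein interpolation at $z=\delta$ shows $|\rho|^{\delta}T^-$ maps $(\Kato^{\theta_0},\Kato^{0})_{[\delta]}=\Kato^{(1-\delta)\theta_0}$ into $(\Kato^{\theta_0},\Kato^{1})_{[\delta]}=\Kato^{(1-\delta)\theta_0+\delta}$, with norm $\le(\|V\|_{\Kato}/4\pi)^{1-\delta}(\|V\|_{\Kato}/4\pi)^{\delta}=\|V\|_{\Kato}/(4\pi)$. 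Now choose $\theta_0:=\theta_1/(1-\theta_2+\theta_1)$, which lies in $[0,1]$ because $\theta_2\le1$ (when $\theta_2-\theta_1=1$ one just reads off Proposition~\ref{thm:2ndgoal} directly). Then $(1-\delta)\theta_0=\theta_1$, so the domain space becomes $\Kato^{\theta_1}$ and the target space becomes $\Kato^{\theta_1+\delta}=\Kato^{\theta_2}$; the two requirements are simultaneously satisfiable precisely because $\delta=\theta_2-\theta_1$, which is the only nontrivial bookkeeping point. This gives $\|\,|\rho|^{\theta_2-\theta_1}T^-\|_{\U_{\Kato_{\theta_2},\Kato_{\theta_1}}}\le\|V\|_{\Kato}/(4\pi)$.

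The second inequality follows by applying the same interpolation verbatim to the analytic family $z\mapsto|\rho|^{z}(\1+T^-)^{-1}$ (well defined on the whole strip since $M(|\rho|^{z}(\1+T^-)^{-1})\le M((\1+T^-)^{-1})+M(\rho(\1+T^-)^{-1})$ pointwise, using $|\rho|^{\mathrm{Re}\,z}\le1+|\rho|$): on $\mathrm{Re}\,z=0$ its marginal is $M((\1+T^-)^{-1})$, bounded on each $\Kato^{\theta_0}$ by Theorem~\ref{thm:goal_lattice}; on $\mathrm{Re}\,z=1$ its marginal is $M(\rho(\1+T^-)^{-1})$, bounded from $L^1$ to $\Kato$ by Proposition~\ref{thm:2ndgoal}. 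Interpolating at $z=\delta$ with the same $\theta_0$ yields $|\rho|^{\theta_2-\theta_1}(\1+T^-)^{-1}\in\U_{\Kato_{\theta_2},\Kato_{\theta_1}}$ with finite norm. The point that needs a little care --- and the only real obstacle --- is justifying Stein interpolation in the algebroid $\U_{X,Y}$, whose norm is defined through domination of $M(T)$ rather than as a plain operator norm; this is exactly where the $\mathrm{Im}\,z$-independence of $M(|\rho|^{z}T^-)$ and of $M(|\rho|^{z}(\1+T^-)^{-1})$ is used, reducing matters to the ordinary three-lines argument for an analytic family of operators on the Banach lattices $X_x\M_\rho$, and making the interpolation lossless in the constant since both endpoint norms equal $\|V\|_{\Kato}/(4\pi)$. (A direct real-variable proof of the first inequality is also possible, via the pointwise bound $|x-z|^{-\theta_2}|x-y|^{-(1-\delta)}\les|y-z|^{-\theta_1}(|x-z|^{-1}+|x-y|^{-1})$ and a Schur-type estimate against $\|V\|_{\Kato}$, but this route loses a multiplicative constant, so interpolation is what delivers the sharp $\|V\|_{\Kato}/(4\pi)$.)
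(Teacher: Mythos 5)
Your proof is correct and takes essentially the same route as the paper, which dispatches the corollary in a single sentence by appealing to complex interpolation between Theorem~\ref{thm:goal_lattice} and Proposition~\ref{thm:2ndgoal}. You have simply filled in the Stein-interpolation bookkeeping that the paper leaves implicit: the analytic families $z\mapsto|\rho|^{z}T^-$ and $z\mapsto|\rho|^{z}(\1+T^-)^{-1}$, the observation that the marginal $M(|\rho|^{z}\,\cdot\,)$ depends only on $\mathrm{Re}\,z$ so the algebroid norm behaves well under the three-lines argument, and the parameter choice $\theta_0=\theta_1/(1-\theta_2+\theta_1)$ that lands the interpolated couple on $\Kato^{\theta_1}\to\Kato^{\theta_2}$.
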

In fact, all these measures have positive support in $\rho$, so the absolute 
value is superfluous.
\begin{proof} The statements follow by complex interpolation between the three 
cases addressed in Theorem \ref{thm:goal_lattice} and Proposition 
\ref{thm:2ndgoal}.
\end{proof}

Finally, we can prove the main result of this paper.
\begin{proof}[Proof of Theorem \ref{main_theorem}]

Start with the functional calculus formula in~\eqref{eq:Stonesine},
\begin{align*}
\frac{\sin(t\sqrt{H})P_c}{\sqrt{H}}f &= \frac{1}{\pi i}\int_{-\infty}^\infty 
\sin(t\lambda) R_V^+(\lambda^2) f\,d\lambda \\
&= -\frac{1}{2\pi}\int_{-\infty}^\infty (e^{it\lambda}-e^{-it\lambda})
R_V^+(\lambda^2) f\,d\lambda \\ 
&= -\frac{1}{2\pi} \int_{-\infty}^\infty (e^{it\lambda}-e^{-it\lambda})
(I +\hat{T}^-(\lambda)^*)^{-1}R_0^+(\lambda^2) f\,d\lambda
\end{align*}
This is a symmetrization of the (inverse) Fourier transform of
$R_V^+(\lambda^2)$.  Most of the desired estimates do not rely on cancellation
so it suffices to bound the Fourier transform in $L^p_t$ by itself. 

In the event that $V = 0$, the factor $(I + \hat{T}^-(\lambda)^*)^{-1}$
reduces to the identity operator and one is left to calculate the inverse 
Fourier
transform of $R_0^+(\lambda^2)$.  Thanks to the explicit
free resolvent kernel $R_0^+(\lambda^2)(x,y) = 
(4\pi|x-y|)^{-1}e^{i\lambda|x-y|}$, 
the result is a measure supported on the light cone $|x-y| = |t|$.
In the notation of Section~\ref{sec:Wiener},
\begin{equation*}
\check{R}_0^+(\rho,x,y) = (4\pi \rho)^{-1}\delta_0(\rho+|x-y|),
\end{equation*}
which is precisely the backward propagator of the free wave equation.
The symmetrized version contains both the forward and backward propagators.

Direct inspection of of the operator kernel shows that
\begin{align*}
\norm[\rho \check{R}_0^+][\U_{L^\infty, L^1}] &= \frac{1}{2\pi}\\ 
\norm[\check{R}_0^+][\U_{L^\infty, \Kato}] &= \frac{1}{2\pi},\
\norm[\check{R}_0^+][\U_{\Kato^*,L^1}] = \frac{1}{2\pi}, \\
\end{align*}
where we have taken $\check{R}_0^+$ to indicate the inverse Fourier transform
of $R_0^+(\lambda^2)$ in a convenient abuse of notation.

Let $S := (\1 + T^-)^{-1}$ be defined in $\U_X$ (for various 
function spaces $X$) according to Theorem~\ref{thm:goal_lattice}.  Then
$S^* \in \U_{X^*}$ satisfies 
$(S^*)^\wedge (\lambda) = (I + \hat T^-(\lambda)^*)^{-1}$.  Observe that
$\rho \int_\R e^{i\rho\lambda}\hat{S}^*(\lambda)R_0^+(\lambda^2)\,d\lambda =
(\rho S^*)\check{R}_0^+ + S^*(\rho\check{R}_0^+)$,
where the products on the right-hand side are taken in the algebroid
structure of $\U_{X,Y}$.
\begin{align*}
\norm[\rho \check{R}_V^+][\U_{L^\infty, L^1}] &\leq
\norm[\rho S^*][\U_{L^\infty, \Kato^*}] \norm[\check{R}_0^+][\U_{\Kato^*, L^1}]
\\
&+ \norm[S^*][\U_{L^\infty}] \norm[\rho \check{R}_0^+][\U_{L^\infty, L^1}]
< \infty
\end{align*}
by the collected results in Theorem~\ref{thm:goal_lattice} and
Proposition~\ref{thm:2ndgoal}.  It immediately follows that 
$\bignorm[t \frac{\sin(t\sqrt{H})}{\sqrt{H}}P_c f][L^\infty_xL^1_t] \les
\norm[f][1]$.  The subsequent two inequalities in Theorem~\ref{main_theorem}
express the fact that the tail integral of a function in $t^{-1}L^1_t$ belongs
to both $L^1_t$ and $t^{-1}L^\infty_t$.
After substituting $Hf$ in place of $f$, one obtains that
\begin{equation*}
\Bignorm[\int_t^\infty \sqrt{H} \sin(s\sqrt{H})P_c f\,ds][L^\infty_x]
\les t^{-1}\norm[Hf][1]
\end{equation*}
for all $t >0$.  Typically 
$\norm[Hf][1] = \norm[(I + V(-\Delta)^{-1})\Delta f][1]
\les \norm[\Delta f][1]$.  Lemma~\ref{lem:norm_equiv} asserts that
the two norms are even equivalent if $H$ has no resonance or eigenvalue
at zero.

The same tail integral bounds also hold for $\cos(t\sqrt{H})P_c f$
once it is established that $\cos(t\sqrt{H})P_c =
\int_t^\infty \sqrt{H}\sin(s\sqrt{H})P_c\,ds$ in an appropriate sense.
Note that the difference
\begin{equation*}
Af := \cos (t\sqrt{H})P_c f - \int_t^\infty \sqrt{H}\sin(s\sqrt{H})P_c f\,ds
\end{equation*}
is independent of $t$ and is a bounded linear operator from
$L^2 \cap H^{-1}L^1$ to $L^2 + L^\infty$.  By Lemma~\ref{lem:norm_equiv}
it is permissible to replace $H^{-1}L^1$ with the equivalent space
$(-\Delta)^{-1}L^1$.  However $\cos(t\sqrt{H})P_c$ converges weakly to zero
in $\B(L^2)$ as $t \to \infty$, and the norm of
$\int_t^\infty \frac{\sin(s\sqrt{H})}{\sqrt{H}}P_c\,ds$
in $\B(L^1, L^\infty)$ is dominated by $t^{-1}$.
That forces $\la Af, g\ra = 0$ for any pair of test functions $f,g$, which means
$A=0$.

The immediate consequence is that $\norm[t \cos(t\sqrt{H})P_c f][L^\infty_x]$
and $\norm[\cos(t\sqrt{H})P_c f][L^\infty_xL^1_t]$ are both controlled by
$\norm[\Delta f][1]$, which is comparable to $\norm[Hf][1]$.  A second 
integration
in the $t$ direction shows that
$\bignorm[\frac{\sin(t\sqrt{H})}{\sqrt{H}}P_c f][L^\infty_{x,t}] \les
\norm[\Delta f][1]$.

Estimates for $\frac{\sin(t\sqrt{H})}{\sqrt{H}}P_c f$ in unweighted $L^1_t$ are
straightforward, requiring only the composition
\begin{align*}
\norm[\check{R}_V^+][\U_{L^\infty, \Kato}] &\leq
\norm[S^*][\U_{L^\infty}] \norm[\check{R}_0^+][\U_{L^\infty, \Kato}] \\
\norm[\check{R}_V^+][\U_{\Kato^*, L^1}] &\leq
\norm[S^*][\U_{\Kato^*}] \norm[\check{R}_0^+][\U_{\Kato^*, L^1}].
\end{align*}

It follows that
$\bignorm[\frac{\sin(t\sqrt{H})}{\sqrt{H}}P_c][L^\infty_xL^1_t]
\les \norm[f][\Kato]$ and 
$\bignorm[\frac{\sin(t\sqrt{H})}{\sqrt{H}}P_c][\Kato^*_xL^1_t]
\les \norm[f][1]$.  Integrating in from $t = \infty$ as above
proves the corresponding estimates for $\cos(t\sqrt{H})P_c$ in
$L^\infty_t$, with norm controlled by $\norm[\Delta f][\Kato]$ and
$\norm[\Delta f][1]$ respectively.  Lemma~\ref{lem:norm_equiv} is
used to show the equivalence of $\Delta f$ and $H f$ in the Kato norm.

Initial conditions $f(x)$ belonging to a Sobolev space such as
$\dot{W}^{1,1}(\R^3)$ require extra care because the Banach lattice
structure is absent.  Indeed, $I + \hat{T}^-(\lambda)$ need not be a
bounded operator here, nor possess a bounded inverse.

We consider the action of $\cos(t\sqrt{H})P_c$ on functions with one
weak derivative.  Start with the spectral representation~\eqref{eq:Stonecosine}
to derive
\begin{align*}
\cos(t\sqrt{H})P_c f &= \frac{1}{\pi i}\int_{-\infty}^\infty 
\lambda \cos(t\lambda) R_V^+(\lambda^2) f\,d\lambda \\
&= \frac{1}{2\pi i}\int_{-\infty}^\infty (e^{it\lambda}+e^{-it\lambda})
\lambda R_V^+(\lambda^2) f\,d\lambda \\
&= \frac{1}{2\pi i} \int_{-\infty}^\infty (e^{it\lambda}+e^{-it\lambda})
(I +\hat{T}^-(\lambda)^*)^{-1}\lambda R_0^+(\lambda^2) f\,d\lambda
\end{align*}
As before it suffices to bound the inverse Fourier transform of
$\lambda R_V^+(\lambda^2) f$ alone rather than the symmetrized version.

In the free case $(\lambda R_0^+(\lambda^2) f)^\vee(\rho)$ provides a solution
to the wave equation in $\R^-_\rho \times \R^3_x$ with boundary conditions
$u(x,0) = f$, $u_\rho(x,0) = 0$.  The explicit formula is
\begin{equation*}
\frac{1}{2\pi i} (\lambda R_0^+(\lambda^2) f)^\vee(\rho,x) 
= \int_{S^2} f(x+ \rho\omega) + \rho \partial_\rho 
f(x+\rho\omega) \dd \omega
\end{equation*}
This is an even function of $\rho$, so for the purposes of estimating its norm 
in
$L^1_\rho$ we may integrate over $\R^+$ instead of $\R^-$.  If $f$ is
bounded and has compact support then
\begin{align*}
\bignorm[(\lambda R_0^+(\lambda^2) f)^\vee(\,\cdot\,,x)][L^1_\rho] 
&\leq C \int_0^\infty \int_{S^2} |f(x+ \rho\omega)| + 
|\rho\, \partial_\rho f(x+\rho\omega)| \dd \omega d\rho \\
&\les (|f| * |x|^{-2})(x) + (|\nabla f| * |x|^{-1})(x) \\
&\les (|\nabla f| * |x|^{-1})(x),
\end{align*}
because $|f(x)| \les |\nabla f| * |x|^{-2}$ pointwise almost everywhere.
Consequently
\begin{equation*}
\bignorm[(\lambda R_0^+(\lambda^2) f)^\vee][\Kato^*_xL^1_\rho] \les
\norm[\nabla f][1] \quad {\rm and} \quad
\bignorm[(\lambda R_0^+(\lambda^2) f)^\vee][L^\infty_xL^1_\rho] \les
\norm[\nabla f][\Kato].
\end{equation*}
The fact that $S^* \in \U_{\Kato^*} \cap\, \U_{L^\infty}$, together with 
the prior estimates for $(\lambda R_0^+(\lambda^2)f)^\vee$, leads
to immediate bounds of the form
\begin{align*}
\norm[\cos(t\sqrt{H})P_c f][\Kato^*_xL^1_t] &\les \norm[\nabla f][1] \\
\norm[\cos(t\sqrt{H})P_c f][L^\infty_x L^1_t] &\les \norm[\nabla f][\Kato].
\end{align*}
Integrating this pair of inequalities with respect to $t$ 
produces the further bounds
$\bignorm[\frac{\sin(t\sqrt{H})}{\sqrt{H}}P_c f][\Kato^*_xL^\infty_t]
\les \norm[\nabla f][1]$ and
$\bignorm[\frac{\sin(t\sqrt{H})}{\sqrt{H}}P_c f][L^\infty_{x,t}]
\les \norm[\nabla f][\Kato]$.

Time-weighted bounds in the free case are also derived from the explicit form
of the propagator kernel.
\begin{align*}
\bignorm[\rho (\lambda R_0^+(\lambda^2) f)^\vee(\,\cdot\,,x)][L^1_\rho] 
&\leq C \int_0^\infty \int_{S^2} |\rho f(x+ \rho\omega)| + 
|\rho^2\, \partial_\rho f(x+\rho\omega)| \dd \omega d\rho \\
&\les |f| * |x|^{-1} + \norm[\nabla f][1] \\
&\les \norm[\nabla f][1].
\end{align*}
which means that
$\bignorm[\rho (\lambda R_0^+(\lambda^2) f)^\vee][L^\infty_xL^1_\rho]
 \les \norm[\nabla f][1]$.  Lemma~\ref{kato_w11} was used in the last line to
control the size of $|f| * |x|^{-1}$.

Proposition~\ref{thm:2ndgoal} provides the additional information that
$\rho S^* \in \U_{L^\infty, \Kato^*}$.  Imitating the weighted
$L^1$ arguments for the sine propagator,
\begin{align*}
\norm[\rho(\lambda R_V^+)^\vee f][L^\infty_x L^1_t]
&\leq  \norm[\rho S^*][\U_{L^\infty, \Kato^*}] 
\norm[(\lambda R_0^+)^\vee f][\Kato^*_x L^1_\rho] 
+ \norm[S^*][\U_{L^\infty}] \norm[\rho(\lambda R_0^+)^\vee f][L^\infty_x 
L^1_\rho] \\
&\les \norm[\nabla f][1]
\end{align*}
The same bound is true of $\rho(\lambda R_V^-)^\vee f$, and their sum 
shows that 
\begin{equation*}
\norm[t \cos(t\sqrt{H})P_c f][L^\infty_x L^1_t] \les \norm[\nabla f][1].
\end{equation*}

Once again the tail integral of a function in $t^{-1}L^1_t$ belongs both to
$L^1_t$ as well as $t^{-1}L^\infty_t$.  This provides mapping bounds for
$\frac{\sin(t\sqrt{H})}{\sqrt{H}}P_c$ once it is established that the operator
\begin{equation*}
Bf := \frac{\sin(t\sqrt{H})}{\sqrt{H}}P_c f + \int_t^\infty \cos(t\sqrt{H})P_c 
f\,ds
\end{equation*}
is trivial.  Here $B$ is bounded from $L^2 \cap \dot{W}^{1,1}$  to
$\dot{H}^1 + L^\infty$ (recalling that $\dot{H}^1 \cong H^{-1/2} L^2$),
and for any pair of smooth test functions, $f, g$
one can show that $\lim_{t\to\infty} \la Bf, g\ra = 0$.  Since $Bf$ is in fact
independent of $t$ it follows that $B = 0$.

The first consequence, that
$\bignorm[\frac{\sin(t\sqrt{H})}{\sqrt{H}}P_c f][L^\infty_xL^1_t] \les
\norm[\nabla f][1]$, was proved earlier in the discussion with a stronger bound
in terms of $\norm[f][\Kato]$ in place of $\norm[\nabla f][1]$.  The second
consequence is the dispersive estimate
\begin{equation}
\Bignorm[\frac{\sin(t\sqrt{H})}{\sqrt{H}}P_c f][L^\infty]
\les |t|^{-1}\norm[\nabla f][1].
\end{equation}

The inhomogeneous estimates presented in Theorem~\ref{main_theorem}
are elementary extensions of the dispersive bounds proved above.
They reduce to the following statements about the propagator kernels
for $\frac{\sin(t\sqrt{H})}{\sqrt{H}}P_c$ and $\frac{\cos(t\sqrt{H})}{H}P_c$,
denoted by $K_1(t,x,y)$ and $K_2(t,x,y)$ respectively:
\begin{equation} \label{eq:prop_kernels}
\begin{aligned}
\norm[K_1(\,\cdot\,,x,y)][\meas] \ &{\rm defines\ bounded\ maps\ taking\ }
L^1\to \Kato^* \ {\rm and}\ \Kato\to L^\infty. \\
\norm[K_2(\,\cdot\,,x,y)][\infty] \ &{\rm defines\ bounded\ maps\ taking\ }
L^1\to \Kato^* \ {\rm and}\ \Kato\to L^\infty. \\
\norm[K_2(\,\cdot\,,x,y)][1] \ &\les 1 \ {\rm at\ almost\ every\ }x\ 
{\rm and}\ y.
\end{aligned}
\end{equation}
Each of these is a well known fact when $H = -\Delta$, and is readily
transferred to $H = -\Delta + V$ by composition with the operator
$S^* \in \U_{\Kato^*} \cap \U_{L^\infty}$.
\end{proof}

\begin{proof}[Proof of Corollary \ref{cor_interpolation}]
The inhomogeneous propagator estimates involving $\Kato^\theta$ and its dual
all follow from complex interpolation of the kernel bounds
in~\eqref{eq:prop_kernels}, recalling that $\Kato^0 = L^1(\R^3)$.
The fact that $K_2$ belongs to $L^1_t \cap L^\infty_t$ allows for a second
parameter of interpolation.

Each of the homogeneous estimates that follow is an interpolation between
three bounds stated in Theorem~\ref{main_theorem}.  For example one is given
that $\frac{\sin(t\sqrt{H})}{\sqrt{H}}P_c f$ is controlled in both the
$\Kato^*_x L^\infty_t$ and the $L^\infty_x L^1_t$ norms by
$\norm[\nabla f][1]$, and controlled in the $L^\infty_{x,t}$ norm by
$\norm[\nabla f][\Kato]$.  The time-decay estimates as stated in 
Corollary~\ref{cor_interpolation} are in fact slightly weaker than what
naturally arises via this method.  A more precise statement is
\begin{align*}
\Bignorm[t^{1-\theta_1-\theta_2} \frac{\sin(t\sqrt{H})P_c}{\sqrt{H}} f][
(\Kato^{\theta_2})^*_x L^\infty_t] &\les \norm[\nabla f][\Kato^{\theta_1}] \\
\bignorm[t^{1-\theta_1 - \theta_2} \cos(t\sqrt{H})P_c f][
(\Kato^{\theta_2})^*_x L^\infty_t] &\les \norm[\Delta f][\Kato^{\theta_1}]
\end{align*}

The first two Lorentz-space inequalities are also derived
from~\eqref{eq:prop_kernels}, this time using real interpolation.
Recall that $L^{3/2,1} \subset \Kato$ and $\Kato^* \subset L^{3,\infty}$.
Then $\norm[K_1(\,\cdot\, ,x, y)][\meas]$ defines a linear map from
$L^1$ to $L^{3,\infty}$, and $L^{3/2,1}$ to $L^\infty$.  By Hunt's
interpolation theorem~\cite{hunt} the same map is also bounded from 
$L^{p,s}$ to $L^{q,s}$ so long as $\frac{1}{p} - \frac{1}{q} = \frac23$
and $1 < p,q < \infty$.  For the cosine propagator $K_2$, first use complex
interpolation as above to show that $\norm[K_2(\,\cdot\, ,x,y)][p]$
maps $L^1$ to $(\Kato^{1-1/p})^* \subset L^{3p/(p-1),\infty}$
and $L^{3p/(2p+1),1} \subset \Kato^{1-1/p}$ to $L^\infty$.
Hunt's theorem establishes mapping bounds in the intermediate Lorentz spaces
and the convolution with $L^p_t$ is handled by Young's inequality.

The dispersive estimate (\ref{eq:dispLorentz}) with polynomial time-decay is 
derived
from~\eqref{eq:dispKato}. By Lemma~\ref{lem:norm_equiv} and complex 
interpolation,
$\Delta f$ and $Hf$ have equivalent norms in $\Kato^\theta$ for all
$0 \leq \theta \leq 1$.  Once $H$ has been brought back to the left side of the
inequality, apply real interpolation between the endpoint cases
$(\theta_1, \theta_2) = (0, \frac{r-1}{r})$  and
$(\theta_1, \theta_2) = (\frac{r-1}{r}, 0)$.

The final two bounds are derived in the same manner.  An additional
assumption that $V \in L^{3/2,1}$ is needed to ensure that $\Delta f$ and $H f$
are comparable in the function space $L^{3/2,1}$ as well as in $L^1$.
The norm equivalence holds in all intermediate Lorentz spaces
$L^{p,s}$, $1 < p < \frac32$ as well.
\end{proof}

\begin{proof}[Proof of Theorem \ref{thm_maximal}] 

Split the operator $\frac{e^{it\sqrt{H}}}{\sqrt{H}}P_c = 
\frac{\cos(t\sqrt{H})}{\sqrt{H}}P_c + i\frac{\sin(t\sqrt{H})}{\sqrt{H}}P_c$
into its real and imaginary components.  Since the continuous spectrum of
$H$ lies on $[0,\infty)$, both parts are well defined and (up to the factor of
$i$) self-adjoint.  Both the regular and reversed-norm Strichartz inequalities
are proved by applying $TT^*$ argument to each component separately.
We begin with the reversed-norm
estimates~\eqref{eq:StrichKato}--\eqref{eq:StrichSobolev}, first in the energy-
critical case.

Consider the operator $\displaystyle T f 
= \frac {\sin(t\sqrt H) P_c}{\sqrt H} f$, with $t$ ranging over the entire real 
line.
Then $\displaystyle T^* F = \int_{\R} 
\frac {\sin(t\sqrt H) P_c}{\sqrt H} F(t) \dd t$. Consequently
$$\begin{aligned}
(T T^* F)(t) &= \int_{\R} \frac {\sin(t\sqrt H) P_c}{\sqrt H} \frac {\sin(s\sqrt 
H) P_c}{\sqrt H} F(s)\dd s \\
&=  \frac 1 2 \int_{\R} \Big(\frac {\cos((t-s)\sqrt H) P_c}{H} - \frac 
{\cos((t+s)\sqrt H) P_c}{H}\Big) F(s)\dd s.
\end{aligned}$$
By the combined estimates~\eqref{eq:cosKato} and~\eqref{eq:cosLorentz},
$TT^*$ is a bounded operator from $\Kato^{\theta}_x L^{r'}_t$ (in particular 
$L^1_x L^2_t$) to its dual and from $L^{q', 2}_x L^{r'}_t$ to its dual.  Since 
we are interested in
mappings between dual spaces, the constant $C_{q'q}$ in~\eqref{eq:cosLorentz}
is uniformly bounded.  That establishes~\eqref{eq:StrichKato}
and~\eqref{maximal} for the imaginary part of
$\frac{e^{it\sqrt{H}}}{\sqrt{H}}P_c$.

The proof for the real part is nearly identical.  Let
$Tf = \frac{\cos(t\sqrt{H})}{\sqrt{H}}P_c f$.  Then the corresponding
$TT^*$ operator is defined by
$$
 \int_\R \frac {\cos(t\sqrt H) P_c}{\sqrt H} \frac {\cos(s\sqrt H) P_c}{\sqrt H} 
\dd s = \frac 
1 2 \int_{\R} \frac {\cos((t-s)\sqrt H) P_c}{H} + \frac {\cos((t+s)\sqrt H) 
P_c}{H} \dd s,
$$
which is once again controlled by~\eqref{eq:cosKato} and~\eqref{eq:cosLorentz} 
as above.

The reversed-norm Strichartz estimate for the wave equation with initial data
$f_0 \in \dot{H}^1$ follows readily from the decomposition
\begin{equation*}
\norm[\cos(t\sqrt{H})f_0][X] 
\leq \Bignorm[\frac{\cos(t\sqrt{H})P_c}{\sqrt{H}}][\B(L^2,X) ] 
\norm[(H P_c)^{1/2}][\B(\dot{H}^1,L^2)] \norm[f_0][\dot{H}^1]
\end{equation*}
with $X$ standing in for any of the target spaces 
$(\Kato^\theta)^*_xL^r_t \cap L^{q,2}_xL^r_t$.
Since $V \in \K_0$ is form-bounded with respect to the Laplacian, 
$H$ is a bounded map from $\dot{H}^1$ into $\dot{H}^{-1}$.  The spectral
projection $P_c$ removes a finite number of eigenfunctions, each of which
belong to $\dot{H}^1 \cap \dot{H}^{-1}$, hence $H P_c$ defines a positive
bounded quadratic form on $\dot{H}^1$.  Its square root then belongs to
$\B(\dot{H}^1, L^2)$.

The same proof based on the $T T^*$ method generalizes to the case of initial 
data in $\dot H^s \times \dot H^{s-1}$, for $\frac 12<s<\frac 32$. For 
$0<s<\frac 32$, $s \ne 1/2$, the propagator $\frac{\cos(t\sqrt{-\Delta})}{(-
\Delta)^s} \mid_{t \geq 0}$ has a radially symmetric kernel of the form
$$\begin{aligned}
K_s(r=|x-y|, t) &= \partial_t \frac {C_s} t \int_0^{\pi} \frac {t^2\sin 
\phi}{(r^2-2rt\cos\phi+t^2)^{\frac 3 2 - s}} \dd \phi \\
&= \partial_t \frac {C_s} r \bigg(\frac 1{|r-t|^{1-2s}} - \frac 1 {(r+t)^{1-
2s}}\bigg) \\
&= \frac {C_s} r \bigg(\frac {\sgn(t-r)} {|r-t|^{2-2s}} - \frac 1 {{(r+t)}^{2-
2s}}\bigg)
\end{aligned}$$
and a similar expression is retrieved in the case $s=\frac 12$ by means of a 
computation that involves logarithms. For $\frac 12<s \leq 1$ we obtain that
$$
\|K_s(r, t)\|_{L^1_t} \les_s \frac 1 {r^{2-2s}} \text{ and } \|K_s(r, 
t)\|_{L^{1/(2-2s),\infty}_t} \les_s \frac 1 r.
$$
Furthermore, for $1 < s < \frac 32$ there is a similar set of bounds
$\|K_s(r, t)\|_{L^{1/(3-2s), \infty}_t} \les_s 1$ and
$\|K_s(r, t)\|_{L^\infty_t} \les_s \frac 1 {r^{3-2s}}$,
and more generally, for $0 \leq \theta \leq 3-2s$,
$\|t^\theta K_s(r, t)\|_{L^\infty_t} \les_s \frac 1 {r^{3-2s-\theta}}$ .

Thus for $\frac 12<s\leq 1$ $\|K_s(r=|x-y|, t)\|_{L^1_t}$ is a bounded map from 
$L^1$ to $(\Kato^{2-2s})^\ast$ and from $\Kato^{2-2s}$ to $L^\infty$, while
$$
\|K_s(r=|x-y|, t)\|_{L^{1/(2-2s),\infty}_t} \in \B(L^1, \Kato^*) \cap \B(\Kato, 
L^\infty).
$$
For $1< s <\frac 32$, $0 \leq \theta \leq 3-2s$,
$$\begin{aligned}
\|K_s(r=|x-y|, t)\|_{L^{1/\theta, \infty}_t} &\les \|t^{\theta} K_s(r=|x-y|, 
t)\|_{L^\infty_t} \\
&\in \B(L^1, (\Kato^{3-2s-\theta})^\ast) \cap \B(\Kato^{3-2s-\theta}, L^\infty).
\end{aligned}$$

Since by Corollary \ref{general} $\rho^{\theta_2 - \theta_1} S^* \in 
\U_{(\Kato^{\theta_1})^*, (\Kato^{\theta_2})^*}$, these bounds carry over to the 
kernel of $\frac{\cos(t\sqrt{H})P_c}{H^s} \mid_{t \geq 0}$. Thus for $\frac 
12<s< 1$ ($s=1$ is the energy-critical case treated above), $0 \leq \theta \leq 
2-2s$, $\frac 1 p - \frac 1 q = 2s-1$, $1<p\leq q<\infty$,
\begin{align}
\lb{unu}\Big\|\int_{t'<t} \frac {\cos((t-t')\sqrt H)P_c}{H^s} F(t') \dd 
t'\Big\|_{(\Kato^{2-2s-\theta})^\ast_x L^{p, \sigma}_t} &\les 
\|F\|_{\Kato^\theta_x L^{p, \sigma}_t}, \\
\lb{doi}\Big\|\int_{t'<t} \frac {\cos((t-t')\sqrt H)P_c}{H^s} F(t') \dd 
t'\Big\|_{(\Kato^{1-\theta})^\ast_x L^{q, \sigma}_t} &\les \|F\|_{\Kato^\theta_x 
L^{p, \sigma}_t},
\end{align}
and for $1 < s < \frac 32$, $0 \leq \theta_1 + \theta_2 \leq 3-2s$, $\frac 1 p - 
\frac 1 q = \theta_1+\theta_2+2s-2$, $1< p \leq q < \infty$,
\begin{align}\lb{decay}
\Big\|\int_{t'<t} (t-t')^{3-2s-\theta_1-\theta_2} \frac {\cos((t-t')\sqrt 
H)P_c}{H^s} F(t') \dd t'\Big\|_{(\Kato^{\theta_2})^\ast_x L^\infty_t} \les 
\|F\|_{\Kato^{\theta_1}_x L^1_t}, \\
\lb{decay2}\Big\|\int_{t'<t} \frac {\cos((t-t')\sqrt H)P_c}{H^s} F(t') \dd 
t'\Big\|_{(\Kato^{\theta_2})^\ast_x L^{q, \sigma}_t} \les 
\|F\|_{\Kato^{\theta_1}_x L^{p, \sigma}_t}.
\end{align}
Interpolating between (\ref{unu}) and (\ref{doi}) we obtain that, for $\frac 12 
<s<1$, $2-2s\leq \theta_1 + \theta_2 \leq 1$ and $\frac 1 p - \frac 1 q = 
\theta_1 + \theta_2 + 2s - 2$, $1<p\leq q<\infty$,
$$
\bigg\|\int_{t'<t} \frac {\cos((t-t')\sqrt H)P_c}{H^s} F(t') \dd 
t'\bigg\|_{(\Kato^{\theta_2})^\ast_x L^{q, \sigma}_t} \les 
\|F\|_{\Kato^{\theta_1}_x L^{p, \sigma}_t}.
$$
A Lorentz space version of the above inequality is, for $\frac 12<s<1$, $0 \leq 
\frac 1 r - \frac 1 {\tilde r} \leq 2s-1$, $\frac 1 r - \frac 1 {\tilde r} = 1 - 
\frac 3 p + \frac 3 q + 2s$, $1 < \tilde r \leq r < \infty$, $1 < p \leq q < 
\infty$,
$$
\bigg\|\int_{t'<t} \frac {\cos((t-t')\sqrt H)P_c}{H^s} F(t') \dd 
t'\bigg\|_{L^{q, \sigma}_x L^{r, \tilde \sigma}_t} \les \|F\|_{L^{p, \sigma}_x 
L^{\tilde r, \tilde \sigma}_t}.
$$
Likewise, a Lorentz space version of (\ref{decay2}) is that for $1<s<\frac 32$, 
$2s-2 \leq \frac 1 r - \frac 1 {\tilde r} \leq 1$, $\frac 1 r - \frac 1 {\tilde 
r} = 1 - \frac 3 p + \frac 3 q + 2s$, $1 < \tilde r \leq r < \infty$, $1<p \leq 
q<\infty$
$$
\bigg\|\int_{t'<t} \frac {\cos((t-t')\sqrt H)P_c}{H^s} F(t') \dd 
t'\bigg\|_{L^{q, \sigma}_x L^{r, \tilde \sigma}_t} \les \|F\|_{L^{p, \sigma}_x 
L^{\tilde r, \tilde \sigma}_t}
$$
and for $1<s<\frac 32$, $\frac 3 p - \frac 3 q = 2s$, $1<p \leq q<\infty$,
$$
\bigg\|\int_{t'<t} \frac {\cos((t-t')\sqrt H)P_c}{H^s} F(t') \dd 
t'\bigg\|_{L^{q, \sigma}_x L^\infty_t} \les \|F\|_{L^{p, \sigma}_x L^1_t}.
$$

Identifying dual spaces in the previous inequalities, by the $T T^\ast$ method 
we arrive at the following estimates for $\frac{\cos(t\sqrt H) P_c}{H^{s/2}}$ 
and $\frac{\sin(t\sqrt H) P_c}{H^{s/2}}$: when $\frac 1 2<s< 1$, $1-s \leq 
\theta \leq \frac 1 2$, $\theta+\frac 1 r = \frac 3 2-s$ and when $6 \leq 
q\leq\frac 3 {1-s}$, $\frac 3 q + \frac 1 {\tilde r} = \frac 3 2 - s$
$$
\Big\|\frac{e^{it\sqrt H} P_c}{H^{s/2}} f\Big\|_{(\Kato^{\theta})^\ast_x L^{r, 
2}_t \cap L^{q, 2}_x L^{\tilde r, 2}_t} \les \|f\|_{L^2}.
$$
For $1 < s < 3/2$, one obtains that when $0 \leq \theta \leq \frac 3 2 - s$ 
(note this includes $L^\infty$ when $\theta=0$), $\theta+\frac 1 r = \frac 3 2-
s$, $r< \infty$ and when $\frac 6 {3-2s} < q < \infty$, $\frac 3 q + \frac 1 
{\tilde r} = \frac 3 2 - s$, $\tilde r < \infty$,
$$
\Big\|\frac{e^{it\sqrt H} P_c}{H^{s/2}} f\Big\|_{(\Kato^{\frac 3 2 - s})^\ast_x 
L^\infty_t \cap L^{\frac 6 {3-2s}, 2}_x L^\infty_t \cap (\Kato^\theta)^\ast_x 
L^{r, 2}_t \cap L^{q, 2}_x L^{\tilde r, 2}_t} \les \|f\|_{L^2}.
$$

The regular Strichartz inequalities~\eqref{eq:Strichartz1} can be proved
in the manner of Ginibre-Velo~\cite{ginebre}. In the energy-critical case $s=1$, 
thanks to the dispersive bound~\eqref{eq:dispLorentz} and to Young's inequality, 
we obtain that when $\frac 1 r - \frac 1 {\tilde r} = \theta_1 + \theta_2$, 
$1<r< \tilde r<\infty$,
$$
\bigg\|\int_{t'<t} \frac {\cos((t-t')\sqrt H)P_c}{H} F(t') \dd 
t'\bigg\|_{L^{\tilde r, \tilde \sigma}_t (\Kato^{\theta_2})^*_x} \les 
\|F\|_{L^{r, \tilde \sigma}_t \Kato^{\theta_1}_x}
$$
and $\frac 1 r - \frac 1 {\tilde r} = 3 - \frac 3 p + \frac 3 q$, $1 <p \leq q < 
\infty$, $1<r < \tilde r < \infty$
$$
\bigg\|\int_{t'<t} \frac {\cos((t-t')\sqrt H)P_c}{H} F(t') \dd 
t'\bigg\|_{L^{\tilde r, \tilde \sigma}_t L^{q, \sigma}_x} \les \|F\|_{L^{r, 
\tilde \sigma}_t L^{p, \sigma}_x}.
$$
From (\ref{decay}) we infer that for $1 < s < \frac 32$, $\frac 3 p - \frac 3 q 
= \frac 1 r + 2s$, and $1<p \leq q<\infty$,
$$
\Big\|t^{1/r} \frac{\cos(t\sqrt H)P_c}{H^s} f \Big\|_{L^{q, \sigma}_x 
L^\infty_t} \les \|f\|_{L^{p, \sigma}_x}.
$$
Thus for $1<s<\frac 32$, $\frac 1 r - \frac 1 {\tilde r} = 1 - \frac 3 p + \frac 
3 q + 2s$, $1 <p \leq q < \infty$, $1<r < \tilde r < \infty$,
$$
\bigg\|\int_{t'<t} \frac {\cos((t-t')\sqrt H)P_c}{H^s} F(t') \dd 
t'\bigg\|_{L^{\tilde r, \tilde \sigma}_t L^{q, \sigma}_x} \les \|F\|_{L^{r, 
\tilde \sigma}_t L^{p, \sigma}_x}
$$
and for $1<s<\frac 32$, $\frac 1 r - \frac 1 {\tilde r} = \theta_1+\theta_2+2s-
2$, $1<r < \tilde r < \infty$,
$$
\bigg\|\int_{t'<t} \frac {\cos((t-t')\sqrt H)P_c}{H^s} F(t') \dd 
t'\bigg\|_{L^{\tilde r, \tilde \sigma}_t (\Kato^{\theta_2})^*_x} \les 
\|F\|_{L^{r, \tilde \sigma}_t \Kato^{\theta_1}_x}.
$$

Identifying dual spaces, by a $T T^*$ argument we obtain that in the energy-
critical case, for $0<\theta< \frac 1 2$, $\theta+ \frac 1 r = \frac 1 2$ and 
for $6< q < \infty$, $\frac 3 q + \frac 1 {\tilde r} = \frac 1 2$,
$$
\Big\|\frac{e^{it\sqrt H} P_c}{H^{s/2}} f\Big\|_{L^{r, 2}_t (\Kato^\theta)^*_x 
\cap L^{\tilde r, 2}_t L^{q, 2}_x} \les \|f\|_2.
$$
In addition, the $L^2$ conservation law also makes this expression bounded in 
$L^\infty_t L^{6, 2}_x$.

When $1<s<3/2$, the same $T T^*$ argument shows that for $0 \leq \theta < \frac 
3 2 - s$, $\theta+\frac 1 r = \frac 3 2 - s$ and $\frac 6 {3-2s} < q < \infty$, 
$\frac 3 q + \frac 1 {\tilde r} = \frac 3 2 - s$
$$
\Big\|\frac{e^{it\sqrt H} P_c}{H^{s/2}} f\Big\|_{L^{r, 2}_t (\Kato^\theta)^*_x 
\cap L^{\tilde r, 2}_t L^{q, 2}_x} \les \|f\|_2.
$$
By $L^2$ conservation, this expression is also bounded in $L^\infty_t L^{\frac 6 
{3-2s}, 2}_x$. Setting $\theta=0$ we also obtain a $L^{\frac 2 {3-2s}, 2}_t 
L^\infty_x$ bound.

For the remaining case $0 \leq s \leq 1$, separate the real and imaginary
parts of $H^{-1/2}e^{it\sqrt{H}}P_c$ as above.  When $s=1$,
a $TT^*$ construction leads to the operator defined by convolution in time with
$\frac{\cos(t\sqrt{H})}{H}P_c$.  Thanks to the dispersive
bound~\eqref{eq:dispLorentz} and Young's inequality, it is a bounded linear
map from $L^{r'}_t L^{q',2}_x$  to its dual so long as $(r,q)$ is 
Strichartz-admissible with $r > 2$.  

When $s=0$ the functional calculus of $H$
provides a conservation law in $L^2(\R^3)$.  Moreover, every imaginary
power of $(H P_c)$ is a partial isometry on $L^2$. Thus~\eqref{eq:Strichartz1}
holds whenever $\Re[s] = 1$ and $\Re[s] = 0$, uniformly with respect to 
the imaginary part of $s$.  Complex interpolation fills in the intermediate 
cases $0<s<1$.

Estimates~\eqref{eq:Strichartz} for the solution of an initial-value problem
are equivalent to~\eqref{eq:Strichartz1}, in view of Lemma~\ref{lem:norm_equiv}.
\end{proof}

\begin{proof}[Proof of Theorem \ref{thm:sharpStrichartz}]
It suffices to prove the second inequality~\eqref{eq:sharpStrichartz}.
Then~\eqref{eq:sharpStrichartz1} follows by the equivalence of homogeneous
and perturbed Sobolev spaces set forth in Lemma~\ref{lem:norm_equiv}.

The main estimates are another consequence of the formulas
\begin{align*}
\cos(t\sqrt{H})P_c f_0 &=
\frac{1}{2\pi i} \int_{-\infty}^\infty (e^{it\lambda}+e^{-it\lambda})
(I +\hat{T}^-(\lambda)^*)^{-1}\lambda R_0^+(\lambda^2) f_0\,d\lambda \\
\frac{\sin(t\sqrt{H})P_c}{\sqrt{H}}f_1 &= -\frac{1}{2\pi}
\int_{-\infty}^\infty (e^{it\lambda} - e^{-it\lambda})
 (I + \hat{T}^-(\lambda)^*)^{-1}R_0^+(\lambda^2)f_1\,d\lambda
\end{align*}
that appeared in the proof of Theorem~\ref{main_theorem}.  Returning to the
notation there, 
\begin{align} 
\cos(t\sqrt{H})P_c f_0(t,x) &= 
-i\big(S^*(\lambda R_0^+)^\vee f_0)(t,x) 
+ (S^*(\lambda R_0^+)^\vee f_0)(-t,x)\big)  \label{eq:S*cosine}\\
\label{eq:S*sine}
\frac{\sin(t\sqrt{H})P_c}{\sqrt{H}} f_1(t,x)
&= - \big((S^* \check{R}_0^+ f_1)(t,x) - 
 (S^*\check{R}_0^+ f_1)(-t,x)\big).
\end{align}
Here $\check{R}_0^+(t,x,y)$ is the sine propagator of the free wave
equation restricted to $t \leq 0$ and $(\lambda R_0^+)^\vee(t,x,y)$
is the free cosine propagator.  

It is well known from~\cite{tao} that the
free evolution satisfies the Strichartz estimates
$\norm[\check{R}_0^+f_1][L^4_{x,t}] \les \norm[f_1][\dot{H}^{-1/2}]$ and
$\norm[(\lambda R_0^+)^\vee f_0][L^4_{x,t}] \les \norm[f_0][\dot{H}^{1/2}]$.
Because $V$ is assumed to belong to $L^{3/2,1}(\R^3)$,
Theorem~\ref{thm:goal_lattice} indicates that 
$S = (I - T^-)^{-1} \in \U_{L^{4/3}}$.  Then $S^* \in \U_{L^4}$ is a bounded 
operator
on $L^4_{x,t}$.  This establishes the bound 
\begin{equation*}
\Bignorm[\cos(t\sqrt{H})P_c f_0 + \frac{\sin(t\sqrt{H})P_c}{\sqrt{H}} f_1][
L^4_tL^4_x]
\les \norm[f_0][\dot{H}^{1/2}] + \norm[f_1][\dot{H}^{-1/2}]
\end{equation*}
The other endpoint of the range,
\begin{equation*}
\bignorm[\cos(t\sqrt{H})P_c f_0 + \frac{\sin(t\sqrt{H})P_c}{\sqrt{H}} f_1][
L^\infty_t L^2_x] \les \norm[f_0][\dot H^1] + \norm[f_1][2],
\end{equation*}
follows directly from the spectral theorem applied to $H$ and from
Lemma~\ref{lem:norm_equiv}.  Complex interpolation 
proves~\eqref{eq:sharpStrichartz} in the intermediate cases $0 < s < \frac12$.

\end{proof}

\begin{proof}[Proof of Theorem \ref{limit}]
It suffices to show that $f(t,x) = \cos(t\sqrt{H})P_c f_0 + 
\frac{\sin(t\sqrt{H})}{\sqrt{H}}P_c f_1$ can be approximated in
$L^{6}_x L^\infty_t$ by functions that are uniformly continuous with respect
to $t$.  Under the assumptions of Theorem~\ref{main_theorem} there are
a finite number of eigenvalues, and each eigenfunction $\psi_j$ belongs to 
$\dot{H}^1(\R^3) \cap \dot{H}^{-1}(\R^3)$.  Then $P_c$
acts boundedly on both $f_0 \in \dot{H}^1$ and $f_1 \in L^2$, and the
coefficients of $\psi_j$ in the full (i.e. unprojected)
evolution are a linear combination of $\cosh(t E_j)$ and $\sinh(t E_j)$,
which does not influence the short-time continuity of solutions.

By 
Lemma~\ref{lem:norm_equiv} the cross-sections $f(t,\,\cdot\,)$ are well defined
elements of $\dot{H}^1$ and converge in norm to $f(0,\,\cdot\,) = P_c f_0$ as
$t$ approaches zero.

We look at functions of the form
$\tilde{f} = \cos(t\sqrt{\tilde{H}})\tilde{P}_c\tilde{f}_0
+ \frac{\sin(t\sqrt{H})}{\sqrt{H}}P_c\tilde{f}_1$, where $\tilde{f}_0,
\tilde{f}_1 \in C^\infty_c(\R^3)$ are smooth approximations of the initial data
and $\tilde{H} = -\Delta + \tilde{V}$ has a smooth compactly supported
approximation of the potential.  The projection $\tilde{P_c}$ is taken according
to the spectral measure of $\tilde{H}$.

Both the continuity of $\tilde{f}$ and its approximation properties are derived
from formulas~\eqref{eq:S*cosine} and~\eqref{eq:S*sine}.  
Recall once more that $\check{R}_0^+(t,x,y)$ is the backwards sine propagator
of the free wave equation.  Choosing 
$\tilde{f}_1 \in C^\infty_c(\R^3)$
insures that $\check{R}_0^+ \tilde{f}_1$ is Lipschitz continuous where it
crosses the plane $\{t=0\}$ and  is globally bounded with bounded
derivative.  The action of $S^* \in \U_{L^\infty}$ preserves
$L^\infty_{x,t}$ norms, but thanks to its convolution structure in the $t$
variable it also preserves Lipschitz continuity in the $t$ direction.
Precisely,
\begin{align*}
|S^*g(t+\delta,x) - S^*g(t,x)| &= 
|S^*(g(\,\cdot\,+\delta,x) - g(\,\cdot\,,x))| \\
&\leq \norm[S^*][\U_{L^\infty}]
\norm[g(\,\cdot\,+\delta,x) - g(\,\cdot\,,x)][L^\infty_{x,t}].
\end{align*}
It follows that $\frac{\sin(t\sqrt{H})}{\sqrt{H}}P_c \tilde{f}_1$ is
uniformly Lipschitz.  The density of smooth test functions in $L^2$
combined with~\eqref{eq:StrichSobolev} shows that it can be used to
approximate $\frac{\sin(t\sqrt{H})}{\sqrt{H}}P_c f_1$ in the space
$L^6_xL^\infty_t$.

Proving continuity of the cosine evolution is slightly more complicated.
Within the formula
\begin{equation*}
\cos(t\sqrt{\tilde{H}})\tilde{P}_c \tilde{f}_0 (t,x)= 
-i\big((\tilde{S}^*(\lambda R_0^+)^\vee \tilde{f}_0)(t,x) 
+ (\tilde{S}^*(\lambda R_0^+)^\vee \tilde{f}_0)(-t,x)\big)
\end{equation*}
one encounters the obstruction that $(\lambda R_0^+)^\vee \tilde{f}_0$
has a jump discontinuity across the plane $\{t=0\}$ of size $\tilde{f}_0(x)$,
even though it is smooth elsewhere.  Imitating the
argument above with $\tilde{S}^* \in \U_{L^\infty}$ only shows that
$\tilde{S}^*(\lambda R_0^+)^\vee \tilde{f}_0$ has bounded variation in the
$t$ direction.  

Recall that the Fourier transform of $\tilde{S}^*$ is 
$(I + R_0^+(\lambda^2)\tilde{V})^{-1}$.  Then
\begin{equation*} 
i \cos(t\sqrt{\tilde{H}})\tilde{P}_c \tilde{f}_0\Big|_{t<0} =
\tilde{S}^*(\lambda R_0^+)^\vee \tilde{f}_0 = 
(\lambda R_0^+)^\vee \tilde{f}_0 - \tilde{S}^* \check{R}_0^+ \tilde{V}
(\lambda R_0^+)^\vee \tilde{f}_0.
\end{equation*}
This is more or less a restatement of Duhamel's formula over the half-space
$\{t < 0\}$, since $\check{R}_0^+$ and $(\lambda R_0^+)^\vee$ are the
backward fundamental solutions of the free wave equation.
On the assumption that $\tilde{V} \in C^\infty_c(\R^3)$, we have that
$\tilde{V} (\lambda R_0^+)^\vee \tilde{f}_0$ is bounded with bounded derivatives
when $t < 0$, and consequently 
$\check{R}_0^+(\lambda R_0^+)^\vee \tilde{f}_0$ is uniformly
Lipschitz.

From here the previous argument applies to show that $\cos(t\sqrt{\tilde{H}})
\tilde{P}_c \tilde{f}_0$ is uniformly Lipschitz when $t < 0$.  It is an even
function of $t$, thus the same Lipschitz constant is valid over the entire range
$t \in (-\infty, \infty)$.

By~\eqref{eq:StrichSobolev}, $\tilde{f}_0$ can be chosen so that 
$\cos(t\sqrt{\tilde{H}})\tilde{P}_c\tilde{f}_0$ is a close approximation to
$\cos(t\sqrt{\tilde{H}})\tilde{P}_c f_0$.  The remaining task is to show that
the spectral multiplier $\cos(t(-\Delta + V))$ varies continuously with
$V \in \Kato_0$ in the absence of nonnegative eigenvalues and resonances.

This is easiest to accomplish when $V \in L^{3/2,1}$.  In that case both
$(T^-)^*$ and $S^*$ belong to $\U_{L^{6,2}}$ by Theorem~\ref{thm:goal_lattice}.
Furthermore $\tilde{S}^* = (\1 + (\tilde{T}^-)^*)^{-1}$ has a comparable norm
in $\U_{L^{6,2}}$ provided $\norm[V - \tilde{V}][L^{3/2,1}]$ is sufficiently 
small.
Then the difference between $S^*$ and $\tilde{S}^*$ is controlled by
\begin{equation*}
\norm[S^* - \tilde{S}^*][\U_{L^6}] = 
\norm[S^*(T^- - \tilde{T}^-)\tilde{S}^*][\U_{L^6}]
\les \norm[S^*][\U_{L^6}]^2\norm[V-\tilde{V}][L^{3/2,1}].
\end{equation*}
Since $\U_{L^6} \subset \B(L^6_xL^\infty_t)$ it follows that 
\begin{equation*}
i \cos(t\sqrt{\tilde{H}})\tilde{P}_c f_0   -
i \cos(t\sqrt{H})P_c f_0 \Big|_{t<0} =
(S^* - \tilde{S}^*)(\lambda R_0^+)^\vee f_0
\end{equation*}
has $L^6_xL^\infty_t$ norm controlled by $\norm[S^*][\U_{L^6}]^2
\norm[V-\tilde{V}][L^{3/2,1}] \norm[f_0][\dot{H}^1]$.

In the more general case where $V \in \Kato_0$, bounds in $\U_{L^6}$ are not
directly available.  The difference between the the evolution of $H$ and 
$\tilde{H}$
is instead estimated by $TT^*$ arguments and interpolation, similar to the proof 
of 
Theorem~\ref{main_theorem}.  Set
\begin{equation*}
Tf_0 = (S^*- \tilde{S}^*)\big(\cos(t\sqrt{-\Delta})f_0\big|_{t<0}\big),
\end{equation*}
with $f_0 \in \dot{H}^1(\R^3)$, which makes
\begin{equation*}
TT^* = \frac12 (S^* - \tilde{S}^*)\Big(
\frac{\cos((t+s)\sqrt{-\Delta})-\cos((t-s)\sqrt{-\Delta})}{-\Delta}\Big|_{s,t < 
0}
\Big)(S - \tilde{S}).
\end{equation*}
The central operator maps $L^1_{x,t}$ to $\Kato^*_x L^\infty_t$. 
Meanwhile $\norm[S - \tilde{S}][\U_{L^1}]$ is controlled by
$\norm[S][\U_{L^1}]^2 \norm[V- \tilde{V}][\Kato]$ as above, and similarly
$\norm[S^* - \tilde{S}^*][\U_{\Kato^*}] \les
\norm[S^*][\U_{\Kato^*}]^2 \norm[V - \tilde{V}][\Kato]$. Thus
the norm of $TT^*: L^1_{x,t} \to \Kato^*_xL^\infty_t$ is less than a
constant times $\norm[V-\tilde{V}][\Kato]^2$.

Taking adjoints, the norm of $TT^*: \Kato_x L^1_t \to L^\infty_{x,t}$
has the same bound.  Real interpolation provides the desired mapping
estimate between $L^{6/5}_xL^1_t$ and $L^6_xL^\infty_t$, with the
conclusion that
\begin{equation*}
\norm[\cos(t\sqrt{H})P_cf_0 - \cos(t\sqrt{\tilde{H}})\tilde{P}_cf_0][
L^6_xL^\infty_t] \les \norm[S][\U_{L^1}]\norm[S][\U_\Kato]
\norm[V-\tilde{V}][\Kato] \|f_0\|_{\dot{H}^1}.
\end{equation*}

\end{proof}

\end{document}